\documentclass[10pt]{article}

\usepackage{latexsym}
\usepackage{amsmath}
\usepackage{amssymb}
\usepackage{amsthm}

\newtheorem{theorem}{Theorem}[section]
\newtheorem{proposition}{Proposition}[section]
\newtheorem{lemma}{Lemma}[section]
\newtheorem{corollary}{Corollary}[section]

\newtheorem{remark}{Remark}[section]

\numberwithin{equation}{section}

\begin{document}

\title{Schwarz Symmetrization and Comparison \\ Results for Nonlinear Elliptic Equations \\ and Eigenvalue Problems}

\author{L. P. BONORINO$^1$ AND J. F. B. MONTENEGRO$^2$ \\ \\ {\normalsize
 $^1$Departamento de Matem\'atica Pura e
Aplicada \hfill } \\ {\normalsize Universidade Federal do Rio Grande do Sul \hfill } \\
{\normalsize 91509-900, Porto Alegre, RS, Brazil \hfill } \\
{\normalsize $^2$Departamento de Matem\'atica, Universidade
Federal do Cear\'a \hfill} \\ {\normalsize 60455-760, Fortaleza,
CE, Brazil \hfill} \\ }

\date{}

\maketitle

\begin{abstract}
We compare the distribution function and the maximum of solutions
of nonlinear elliptic equations defined in general domains with
solutions of similar problems defined in a ball using Schwarz
symmetrization. As an application, we prove the existence and
bound of solutions for some nonlinear equation. Moreover, for some
nonlinear problems, we show that if the first $p$-eigenvalue of a
domain is big, the supremum of a solution related to this domain
is close to zero. For that we obtain $L^{\infty}$ estimates for
solutions of nonlinear and eigenvalue problems in terms of other
$L^p$ norms.
\end{abstract}

{\small {\bf Mathematics Subject Classification (2010).} 35J60 -
35J70 - 35J92}

{\small {\bf Keywords.} Schwarz Symmetrization,  Distribution
Function, Nonlinear Elliptic Problem, Eigenvalue Problem,
Degenerate Elliptic Equation}

\section{Introduction}
In this work we study the $L^{p}-$norm and the distribution
function of solutions to the Dirichlet Problem
\begin{equation*}
\left\{
\begin{array}{rcll}
- {\rm div} ( a(u, \nabla u)) & = & f(u) & \; {\rm in } \; \; \Omega \\[2pt]
  u & = & 0 & \; {\rm on } \; \; \partial \Omega, \\ \end{array} \right.
\end{equation*}
where $\Omega$ is an open bounded set in $\mathbb{R}^n$,
$f:\mathbb{R} \to \mathbb{R}$ and $a:\Omega \times \mathbb{R} \times \mathbb{R}^n \to
\mathbb{R}^n$ satisfy some suitable conditions. First we assume the following hypotheses:

\

\noindent (H1) $f$ is a nonnegative locally Lipschitz function;
\\(H2) $f$ is nondecreasing;
\\(H3) $a\in C^0(\mathbb{R} \times \mathbb{R}^n; \mathbb{R}^n ) \cap C^1(\mathbb{R} \times (\mathbb{R}^n \backslash \{0\}); \mathbb{R}^n)$ is given by $a(t,{\rm z}) =  e(t,|{\rm z}|){\rm z}$, where $e \in C^1(\mathbb{R} \times (\mathbb{R} \backslash \{0\}))$ is  positive on $\mathbb{R} \times \mathbb{R} \backslash \{0\}$,
$a(t,0)=0$, $a(t,{\rm z})\cdot {\rm z}$ is convex in the variable ${\rm z} \in \mathbb{R}^n$ and $\partial_{s}\left( |a(t,s {\rm z})|\right) > 0$ for ${\rm z} \ne 0$ and $s > 0$. Observe that the convexity of ${\rm z} \mapsto a(t,{\rm z}) \cdot {\rm z}$ implies in the fact that $s \mapsto |a(t,s{\rm z})|$ is increasing.
\\(H4) there exist $p\ge q >1$, $q_0 > 1$, and positive constants $C_s$, $C_{*}$ and $C^{*}$ s.t.
$$ C_{s} |{\rm z}|^{q_0} \le  \langle a(t, {\rm z}) , {\rm z}  \rangle  \quad {\rm for } \quad |{\rm z}| \le 1, \; t \in \mathbb{R}$$  and $$ C_{*} |{\rm z}|^q \le \langle a(t, {\rm z}) , {\rm z}  \rangle  \le C^{*}(|{\rm z}|^p + |t|^p + 1)  \quad {\rm for } \quad |{\rm z}| \ge 1, \; t \in \mathbb{R}.$$
Hence, using that $s \mapsto a(t,s{\rm z})\cdot s{\rm z}$ is increasing and positive,
$$ C_*(|{\rm z}|^q - 1 ) \le a(t,{\rm z}) \cdot {\rm z} \le C^*(|{\rm z}|^p + |t|^p + 1) \quad {\rm for }  \quad {\rm z} \in \mathbb{R}^n $$
and
\begin{equation}
 \label{alqestimate}
C_*( \lambda_B \|w\|_q^q -|\Omega|) \le \int_{\Omega} a(w, \nabla w) \cdot \nabla w \, dx \le  C^*( \|\nabla w \|_p^p  + \| w \|_p^p + |\Omega|) ,
\end{equation}
for $w \in W^{1,p}_0(\Omega)$, where $\lambda_B$ is the first eigenvalue of $-\Delta_q$ in a ball $B$, that has the same measure as $\Omega$.
\\(H5) there exist $\beta\ge 0$ and $\alpha <  C_* \lambda_B $ such that
$$ 0< f(t) \le \alpha t^{q-1}+\beta \quad {\rm for} \quad t >0.
$$

\

At first our main concern is to compare the maximum and the
distribution function of a solution associated to $\Omega$ with
one associated to $B$. We can obtain even a priori estimates of
solutions for some problems with nonlinear lower order terms and
prove the existence of solution. Later on we see also some
applications for these estimates, including $L^{\infty}$ estimates
for some eigenvalue and nonlinear problems. So we show that if a
domain is ``far away'' from the ball (ie, its first $p$-eigenvalue
is big), then the maximum of a solution is small. Indeed the
supremum of a solution is bounded by some negative power of the
first $p$-eigenvalue. This kind of question seems to be new and
the works in the literature normally are focused in comparing
solutions with a radial one, disregarding better estimates when
the domain is not close to a ball.

\

More precisely, let $B$ be the open ball in $\mathbb{R}^n$,
centered at the origin,  such that $|B|= |\Omega|$, where $|C|$
denotes the Lebesgue's measure in $\mathbb{R}^n$ of a measurable
set $C$, and consider the function $U_B$ given by
\begin{equation}
U_B(x) = \sup \{ U(x) \; | \; U \in W^{1,p}_0(B) \; \text{is a
radial solution of } \; ({\rm \tilde{P}}_B) \}, \label{definicaoU}
\end{equation} where $({\rm \tilde{P}}_B)$ is the
Dirichlet Problem
\begin{equation*} \left\{
\begin{array}{rcll}
- {\rm div} ( {\tilde a}(U, \nabla U)) & = & f(U) & \; {\rm in } \; \; B \\[5pt]
  U & = & 0 & \; {\rm on } \; \; \partial B. \\ \end{array}
\right.  \tag{${\rm \tilde{P}}_{B}$}
\end{equation*}
Let $u$ be a weak solution of
\begin{equation*} \left\{
\begin{array}{rcll}
- {\rm div} ( a(v, \nabla v)) & = & f(v) & \; {\rm in } \; \; \Omega \\[5pt]
  v & = & 0 & \; {\rm on } \; \; \partial \Omega. \\ \end{array}
\right.  \tag{${\rm P}_{\Omega}$} \label{eq1}
\end{equation*}
in $W_0^{1,p}(\Omega)$. Observe that $u$ and $U_B$ are
positive. Define the distribution function of $u$ by
$$\mu_{u}(t)=|\{ x \in \Omega : u(x) > t \}|.$$
For $a$, $\tilde{a}$ and $f$ satisfying hypotheses (H1)-(H5) (the
constants and powers related to $a$ and $\tilde{a}$ can be
different) and $\tilde{a}(t,{\rm z})\cdot {\rm z}\le a(t,{\rm
z})\cdot {\rm z}$, we prove that $U_B$ is a solution of $({\rm
\tilde{P}}_B)$ and, in Theorem \ref{finaltheorem},
\begin{equation}
\mu_{u}(t) \le \; \mu_{B}(t), \quad \forall t \in [0,\max U_B],
\label{comparacao}
\end{equation}
where $\mu_{B}$ is the distribution function of $U_B$. If $\Omega$
is not a ball, $a=a({\rm z})$ and $(a({\rm z})\cdot {\rm
z})^{1/p}$ is convex, then this inequality is strict.

We also prove some sort of maximum principle with respect to the
solutions in the ball in the following sense: if $u$ and $U$ are
solutions of $({\rm P}_{\Omega})$ and $({\rm \tilde{P}}_B)$
respectively,
 $u^{\sharp} \le U$ (not necessarily maximal solution) and $u^{\sharp} \ne U$, then $u^{\sharp} < U$ provided $f$ and $a$ satisfy suitable conditions.

\

As an application we obtain this comparison to the problem with
lower order terms
\begin{equation}
 \left\{ \begin{array}{rclc}
 \displaystyle -{\rm div} ({a(\nabla u)}) - \frac{h'(u)}{h(u)} \nabla u \cdot a(\nabla u) & = & g(u) & {\rm in} \; \; \Omega \\[3pt]
    u & = & 0 & { \rm on } \; \; \partial \Omega,
\end{array} \right.
\label{appequation}
\end{equation}
where $h \in C^1$ is bigger than some positive constant, $f=gh$ and $a_1(t,{\rm z})=h(t)a({\rm z})$ satisfy (H1)-(H5). This holds even if $h$ has a bad growth and does not satisfy the upper inequality of \eqref{alqestimate}. For the special case $$ -\Delta_p u - \frac{h'(u)}{h(u)}|\nabla u|^p = g(u)$$
this priori estimate can be used to prove existence of solution.

\

\noindent Moreover we get also some result even when $f$ is not
nondecreasing. Indeed, if $f$ is positive, $f(t)/t^{p-1}$ is
decreasing and $a(t,{\rm z}) =\tilde{a}(t,{\rm z})= {\rm z}|{\rm
z}|^{p-1}$, we show that
$$\max U_B \ge \max u.$$

 \noindent This $L^{\infty}$ estimate can be easily extended to the problem
\begin{equation}
\left\{
\begin{array}{rclc}
- \Delta_p v + k(v) & = & f(v) & {\rm in } \; \; \Omega \\[3pt]
  v & = & 0 & {\rm on } \; \; \partial \Omega ,\\ \end{array}
  \right.
\label{extensioncaparison}
\end{equation}
where $k$ is positive and nondecreasing and $f$ is positive and
$f(t)/t^{p-1}$ is decreasing.

 \

\noindent Then we apply these results to prove that if $w \in W_0^{1,p}(\Omega)$ is a solution of ${\rm div}(a (x, \nabla w)) = f(w)$ in  $\Omega$,
where $a$ satisfies some conditions and $f\in C^1(\mathbb{R})$ is bounded by
$c|t|^{q-1} + d$, with $1 < q \le p$ and $c,d \ge 0$, then
$$ \|w\|_{\infty} \le C_1 \|w\|_r^{\frac{rp}{n(p-q)+rp}} + C_2 \|w\|_{r}^{\frac{rp}{n(p-1) + rp}},  $$
where $C_1=C_1(n,p,q,r,\rho,c)$ and $C_2=C_2(n,p,r,\rho,d)$ are positive constants.
In the special case $|\Delta_p w| \le |\lambda|  |w|^{q-1}$, where $\lambda \in \mathbb{R}$, we have
\begin{equation} \label{eigenvalueestimate00} \|w\|_s  \le \left[ \frac{2}{(\omega_n)^{1/r}} \left( \frac{2(p-1)}{p}\right)^{\frac{n(p-1)}{rp}} \left(\frac{|\lambda|}{n}\right)^{n/rp}\right]^{\frac{s-r}{\kappa s}} \| w\|_r^{\frac{s-r}{\kappa s}+ \frac{r}{s}} ,
 \end{equation}
where $0< r < s$ and $\kappa = 1+\frac{n(p-q)}{rp}$. These
inequalities imply, according to Corollary
\ref{Linfinitestimatebyeigenvalue}, in a $L^{\infty}$-norm decay
of the solutions of some sublinear equations, when the domain
becomes ``far away'' from a ball with the same volume. Since the
ball is the domain of a given measure that maximizes the $L^p$
norms in several problems, it would be interesting to obtain
better estimates for solutions that are not defined in a ball.
Hence, we need to measure in some way the difference between its
domain and the corresponding ball. The first eigenvalue is a
possible form of distinction between these sets, that we use to
establish some upper bound. Finally, as an application, we prove
that $u^{\sharp} < U$, where $u$ is a solution of $({\rm
P}_{\Omega})$ and  $U$ a solution of $({\rm \tilde{P}}_B)$, even
when $f$ is not monotone, provided the first eigenvalue associated
to $\Omega$, $\lambda_p(\Omega)$, is big enough and some
conditions on $a$ and $f$ are satisfied.

We point out that we are not interested in establishing existence of solutions for $({\rm P}_{\Omega})$. Our main concern is just to compare these solutions and we obtain existence results only for the radial case.

\

Results of this type have been obtained by several authors. In
\cite{T1} Talenti proved that if $u$ is the weak solution of the
Dirichlet Problem
$$ - \sum_{i,j=1}^n \frac{\partial}{\partial x_i} \left( a_{ij}(x)
\frac{\partial u}{\partial x_j} \right) + c(x)u = f(x) \; \; {\rm
in } \; \;  \Omega \quad {\rm and} \quad u =0 \; \; {\rm on} \; \;
\partial \Omega,
$$ where $c(x) \ge 0$, $\sum_{ij}a_{ij}(x)\xi_i \xi_j \ge
\xi_1^2 + \dots +\xi_n^2$ and $v$ is the weak solution of
$$-\Delta v = f^{\sharp} \; \; {\rm in} \; \; B \quad {\rm and } \quad v =0 \; \; {\rm on} \; \; \partial B,$$
where $B$ is the ball centered at 0 such that $|B| = |\Omega|$ and
$f^{\sharp}$ is the decreasing spherical rearrangement of $f$,
then ${\rm ess \, sup} u \le {\rm ess \, sup} v$ and $\mu_u \le
\mu_v$. As a consequence, $\| v \|_{L^p} / \| f^{\sharp}\|_{L^q}
\ge \| u \|_{L^p} / \| f\|_{L^q}$. This estimate is an extension
of the one previously obtained by Weinberger \cite{W} for the
ratio $\| u \|_{L^{\infty}} / \| f\|_{L^q}$. Further results have
been proved for a larger class of linear equations that either
satisfy weaker ellipticity conditions (see \cite{AT1}, \cite{AT2})
or contain lower order terms (see \cite{ALMT}, \cite{ALT2},
\cite{AMT1}, \cite{AT3}, \cite{C}, \cite{FP}, \cite{T3},
\cite{TV}). Similar problems were studied in \cite{Ke1}, \cite{Ke2}, \cite{Ke3}.

\

As in the linear case, estimates have been obtained for solutions $u \in W_0^{1,p}(\Omega)$ to the nonlinear
problem
$$  \begin{array}{l} - \sum_{i=1}^n ( a_i(x,u, \nabla u))_{x_i} - \sum_{i=1}^n (b_i(x)|u|^{p-2}u)_{x_i} + h(x,u)= f(x,u) \; {\rm in } \;
\Omega , \end{array}  $$
comparing the decreasing spherical rearrangement of $u$ with the
solution of some nonlinear ``symmetrized" problem. For instance,
the case $b_i=h=0$ and $\sum a_i(x,u,\xi)\xi_i \ge A(|\xi|)$,
where $A$ is convex and $\lim_{r \to 0} A(r)/r=0$, is considered
in \cite{T2}. The problem in a general form is studied in
\cite{BM}, assuming that the coefficients are in suitable spaces
and $\sum a_i(x,u,\xi)\xi_i \ge |\xi|^p$. Under similar
hypotheses, the case $b_i=0$ is considered in \cite{FM} and
different comparison results are obtained. In \cite{AFLT}
estimates are proved when the coefficients satisfy $b_i=h=0$,
$a_i=a_i(Du)$ and $\sum a_i(\xi)\xi_i \ge (H(\xi))^2$, where $H$
is a nonnegative convex function, positively homogeneous of degree
1. Other related result were established in \cite{Ab}, \cite{FM2}, \cite{Me}.
Some results also extend to parabolic equations (see e.g.
\cite{AFLT}, \cite{ALT2}, \cite{B2}).

\

Usually comparison results are obtained considering a
``symmetrized equation" that is different from the original one.
In this work we can keep the original equation and symmetrize only
the domain, obtaining sharper estimates. Results similar to ours
are established in  \cite{B1}, \cite{L1} for the laplacian
operator, where the authors apply the method of subsolution and
supersolution to prove that, for a given symmetric solution $U$ in
the ball, there exists some solution in $\Omega$ for which the
symmetrization is less than $U$. Indeed, applying the iteration
procedure used in those works and the main result of \cite{T2},
the estimate \eqref{comparacao} can be obtained in the particular
case $-{\rm div}(a(\nabla u)) = f(u)$, provided we have some a
priory estimate in the $L_q$ norm for subsolutions and the
existence of the maximal radial solution $U_B$. Using different
techniques, we prove in Section \ref{MR} that the symmetrization
of any solution of (\ref{eq1}) is bounded by $U_B$, even in the
case $a=a(t,z)$ and $\tilde{a}=\tilde{a}(t,z)$, as long as
hypotheses (H1)-(H5) are satisfied. In Section \ref{PR}, we review
some important concepts and results. Some estimates in this
section are interesting by itself. In Section \ref{SC} we get
estimates assuming that $a({\rm z})=\tilde{a}({\rm z})=|{\rm
z}|^{p-2}{\rm z}$ and $f(t)/t^{p-1}$ is decreasing. Indeed we
prove that $\max U_B \ge \max u$ even when $f$ is not
nondecreasing. Observe that the uniqueness of solution to the
problems (\ref{eq1}) and $({\rm \tilde{P}}_B)$ is proved in
\cite{BO} for the Laplacian operator when $f(t)/t$ is decreasing.
An extension of this is proved to the $p$-Laplacian in \cite{BK}.
Hence, some results in this section can be obtained directly from
the existence of a solution associated to $B$ that is greater than
some solution associated to $\Omega$. In Section \ref{RC}, we
study the behavior of solutions in the radial case. In Section
\ref{EBR} we obtain a bound to solutions of \eqref{appequation}
and in some special case we use this comparison to show the
existence of solution. In Section \ref{EP} we get some
inequalities between the $L^p$ norms of solutions of some
``eigenvalue problems'' and some lower bound for the distribution
function of these solutions. For eigenvalue problems, the $L^p$
estimates are established in \cite{AFT}, \cite{C2}, and \cite{C3},
where the authors obtain sharper estimates, since the constants
are optimal. We are not concerned with the best constant but only
with the relations between the $L^p$ norms and the real parameter
$\lambda$. We get an explicit relation for a larger class of
equations and, for the typical eigenvalue problem, the estimate
hold not only for the first eigenvalue of the operator but also
for the others. Other authors make some similar estimates on
manifolds (see e.g. \cite{G2} and \cite{H2}) for the classical
eigenvalue problem, but the constant depends on the manifold and
the boundary. It is also established some $L^p$ estimates for a
class of Dirichlet problems and a relation between the norms and
the first eigenvalue of the domain.

\section{Preliminary Results}
\label{PR}

In this section we recall some important definitions and useful
results. First, if $\Omega$ is an open bounded set in
$\mathbb{R}^n$ and $u: \Omega \to \mathbb{R}$ is a measurable
function, the distribution function of $u$ is given by
$$\mu_{u}(t)=|\{ x \in \Omega : |u(x)| > t \}| \quad {\rm for } \; t \ge 0.$$
The function $\mu_u$ is non-increasing and right-continuous. The
decreasing rearrangement of $u$, also called the {\it generalized
inverse} of $\mu_u$, is defined by
$$u^{*}(s)= \sup \{ t \ge 0 : \mu_u (t) \ge s \}. $$
If $\Omega^{\sharp}$ is the open ball in $\mathbb{R}^n$, centered at
$0$, with the same measure as $\Omega$ and $\omega_n$ is the
measure of the unit ball in $\mathbb{R}^n$, the function
$$ u^{\sharp}(x) = u^*( \omega_n |x|^n) \quad {\rm for} \quad x \in \Omega^{\sharp} $$
is the spherically symmetric decreasing rearrangement of $u$. It
is also called the Schwarz symmetrization of $u$. For an
exhaustive treatment of rearrangements we refer to \cite{ALT1},
\cite{B1}, \cite{CR}, \cite{HLP}, \cite{K2}, \cite{M}. The next
remark reviews important properties of rearrangements and will be
necessary through this work.
\begin{remark}
\label{obs2} Let $v,w$ be integrable functions in $\Omega$ and let
$g:\mathbb{R}\to\mathbb{R}$ be a non-decreasing nonnegative function.
Then
$$ \int_{\Omega} g(|v(x)|) \; dx = \int_0^{|\Omega|} g(v^*(s)) \; ds
= \int_{\Omega^{\sharp}} g(v^{\sharp}(x)) \; dx .$$ Hence, if
$\mu_v(t) \ge \mu_w(t) $ for all $t>t_1>0$, it follows that
$$ \int_{t_1<v} \! \! g(v(x)) \; dx \! =  \! \int_0^{\mu_u(t_1)} \! \! g( v^*(s)) \; ds \!
\ge \! \int_0^{\mu_w(t_1)} \! \! g( w^*(s)) \; ds \! = \!
\int_{t_1<w} \! \! g(w(x)) \; dx,
$$ since $v^*(s) \ge w^*(s)$
for $s \le \mu_w(t_1) $. Moreover, if $|\{v > t_2\}| \le |\{w >
t_2\}|<\infty$, $|\{v> t_1\}|=|\{w>t_1\}|<\infty$ and $|\{v> t\}|
\ge |\{w>t\}|$ for all $t_1<t<t_2$, then
$$ \int_{ t_1 < v \le t_2} g(v(x)) \; dx \; \ge \; \int_{t_1 < w \le t_2 }g(w(x)) \;dx. $$
Finally, an extension of the P\'olya-Szeg\"o principle (\cite{HjSq}, see also
\cite{Br}, \cite{BZ}, \cite{K2}) states that, if $B = B(t,{\rm z}) \in C([0,\infty) \times [0,\infty))$ is increasing and convex in the variable ${\rm z}$, then
$$\int_{\Omega} B( v(x), \nabla v(x))  \; dx  \; \ge \; \int_{\Omega^{\sharp}} B( v^{\sharp} (x), \nabla v^{\sharp} (x)) \;
dx \quad {\rm for} \; \; v\ge 0 \; \; {\rm in} \; W_0^{1,p} (\Omega). $$ This inequality
also holds if we replace $\Omega$ and $\Omega^{\sharp}$ by $\{t_1
< v < t_2 \}$ and $\{t_1 < v^{\sharp} < t_2 \}$, respectively. For
$B(t,{\rm z})=|{\rm z}|^2$, this inequality is the classical version of the
P\'olya-Szeg\"o \cite{PS} principle.
\end{remark}

\begin{remark} \label{classicalsol} For any bounded open set $\Omega'$ satisfying $|\Omega'| \le |\Omega|$, there exists a constant
$C=C(n,q,\alpha,\beta,C_{*},|\Omega|,|\Omega'|)$ such that
$\sup u \le C$ for any weak solution $u \in W_0^{1,p}(\Omega')$ of $({\rm P}_{\Omega'})$. Moreover,
$C=O(|\Omega'|^{\rho})$ as $|\Omega'| \to 0$, where $\rho >0$ depends only on $n$ and $p$.
This result is a consequence of the following two lemmas.
\end{remark}
\begin{lemma} Let $\Omega'$ be a bounded open set s.t. $|\Omega'| \le |\Omega|$.
If $u \in W_0^{1,p}(\Omega')$ is a nonnegative subsolution of $({\rm
P}_{\Omega'})$ and conditions (H1),(H5), $ C_{*} (|{\rm z}|^q -1) \le \langle a(t, {\rm z}) , {\rm z}  \rangle$ for ${\rm z} \in \mathbb{R}^n$, $t \in \mathbb{R}$ are satisfied, then
$$ \|u\|_{L^q} \le  M(\Omega'):= \left( \frac{2 C_* |\Omega'|}{C_* \lambda_{B'} - \alpha}\right)^{1/q} + \left( \frac{ 2 \beta |
\Omega'|^{1/q'}}{C_* \lambda_{B'} - \alpha}\right)^{1/(q-1)},$$
where $1/q' + 1/q =1$, $B'$ is a ball that satisfies
$|B'|=|\Omega'|$ and $\lambda_{B'}$ is the first eigenvalue of
$-\Delta_q$ in $B'$.
\end{lemma}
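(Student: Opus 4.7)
The plan is to test the subsolution inequality against $u$ itself, apply the coercivity estimate, use hypothesis (H5) to bound $f(u)$, and then combine these with a Faber--Krahn style inequality for $-\Delta_q$ to close the $\|u\|_q$ estimate.

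More concretely, since $u \ge 0$ is a subsolution and $u \in W_0^{1,p}(\Omega')$ is a legitimate nonnegative test function, I would start from
\begin{equation*}
\int_{\Omega'} a(u,\nabla u) \cdot \nabla u \, dx \;\le\; \int_{\Omega'} f(u)\, u \, dx.
\end{equation*}
On the left, the pointwise bound $C_{*}(|{\rm z}|^{q}-1)\le \langle a(t,{\rm z}),{\rm z}\rangle$ gives
$C_{*}\|\nabla u\|_{q}^{q}-C_{*}|\Omega'| \le \int a(u,\nabla u)\cdot\nabla u$. On the right, (H5) together with H\"older's inequality yields
\begin{equation*}
\int_{\Omega'} f(u)\,u \, dx \;\le\; \alpha \|u\|_{q}^{q} + \beta \int_{\Omega'} u \, dx \;\le\; \alpha\|u\|_{q}^{q} + \beta |\Omega'|^{1/q'}\|u\|_{q}.
\end{equation*}
To pass from $\|\nabla u\|_{q}$ to $\|u\|_{q}$ I would invoke the Faber--Krahn type inequality $\|\nabla u\|_{q}^{q} \ge \lambda_{B'}\|u\|_{q}^{q}$, which follows from the P\'olya--Szeg\"o principle recalled in Remark \ref{obs2} applied to $B({\rm z})=|{\rm z}|^{q}$ (so that the first $q$-eigenvalue of $\Omega'$ is at least that of the equimeasurable ball $B'$). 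Combining everything,
\begin{equation*}
(C_{*}\lambda_{B'}-\alpha)\|u\|_{q}^{q} \;\le\; C_{*}|\Omega'| + \beta|\Omega'|^{1/q'}\|u\|_{q}.
\end{equation*}
Note that $C_{*}\lambda_{B'}-\alpha>0$, since $|B'|\le|B|$ forces $\lambda_{B'}\ge\lambda_{B}$ by the monotonicity of the first eigenvalue with respect to inclusion (equivalently, by scaling), and (H5) gives $\alpha<C_{*}\lambda_{B}$.

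From this single inequality, writing $X:=\|u\|_{q}$, I would finish with an elementary dichotomy: either the constant term on the right dominates, so that $(C_{*}\lambda_{B'}-\alpha)X^{q} \le 2C_{*}|\Omega'|$ and hence $X \le (2C_{*}|\Omega'|/(C_{*}\lambda_{B'}-\alpha))^{1/q}$; or else the linear term dominates, giving $(C_{*}\lambda_{B'}-\alpha)X^{q-1}\le 2\beta|\Omega'|^{1/q'}$ and hence $X\le (2\beta|\Omega'|^{1/q'}/(C_{*}\lambda_{B'}-\alpha))^{1/(q-1)}$. Since $X$ is bounded by one of these two quantities, it is bounded by their sum, which is exactly $M(\Omega')$.

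The only place where care is needed is ensuring that the Faber--Krahn inequality is applicable: $u$ lies in $W_0^{1,p}(\Omega')$ but we are using only its $W_0^{1,q}$-regularity with $q\le p$, which is immediate from $|\Omega'|<\infty$ and H\"older. The rest is bookkeeping. I do not expect a genuine obstacle; the only mildly delicate check is the strict positivity of $C_{*}\lambda_{B'}-\alpha$ when $|\Omega'|\le|\Omega|$, handled by the scaling remark above.
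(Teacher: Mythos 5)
Your proof is correct and follows essentially the same route as the paper: test against $u$, bound the left side by $C_*(\|\nabla u\|_q^q - |\Omega'|)$ and then by $C_*(\lambda_{B'}\|u\|_q^q - |\Omega'|)$ via Faber--Krahn (this is exactly the first inequality of \eqref{alqestimate} applied on $\Omega'$), bound the right side by (H5) and H\"older, and finish with the same two-case dichotomy to convert the single polynomial inequality in $\|u\|_q$ into a bound by the sum $M(\Omega')$. The paper phrases the dichotomy by splitting on whether $(C_*\lambda_{B'}-\alpha)\|u\|_q^{q-1}-\beta|\Omega'|^{1/q'}$ exceeds half of $(C_*\lambda_{B'}-\alpha)\|u\|_q^{q-1}$, which is algebraically the same case distinction you make.
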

\begin{proof} Multiplying the equation by $u$ and integrating, we get $$
\int_{\Omega'} \nabla u \cdot a(u,\nabla u) \, dx \le \int_{\Omega'} u
f(u) \, dx  \le \alpha \|u\|_{q}^q + \beta \| u \|_{q} |
\Omega'|^{1/q'}.
$$ Since $ C_{*} (|{\rm z}|^q -1) \le \langle a(t, {\rm z}) , {\rm z}  \rangle$, the first inequality of \eqref{alqestimate} holds. Hence
$$ \| u \|_{q} \left[ (C_* \lambda_{B'} - \alpha) \|u\|_q^{q-1}  - \beta  |\Omega'|^{1/q'}\right] \le  C_* |\Omega'|.
$$
Studying the cases $(C_* \lambda_{B'} - \alpha) \|u\|_q^{q-1}  - \beta  |\Omega'|^{1/q'} \le  (C_* \lambda_{B'} - \alpha) \|u\|_q^{q-1}/2$  and $ >  (C_* \lambda_{B'} - \alpha) \|u\|_q^{q-1}/2$
individually, we get the result.  \end{proof}

\noindent Next lemma is a particular result of Theorem 3.11 of
\cite{MZ} in the case $n \ge q$. For $n < q$, the estimate can be
obtained following the computations of that theorem and Morrey's
inequality. A sketch of the proof is done in the appendix.
\begin{lemma}
Suppose that $u$ satisfies the hypotheses of the preceding lemma.
If $n < q$  then
$$\sup_{\Omega'} u \le C \|u\|_q + D |\Omega'|^{1/q} ,$$
where $C= C(n,q,\alpha, \beta,C_{*})$ and $D= D(n,q,\alpha,
\beta,C_{*})$.

\noindent If $n \ge q$, then
$$ \sup_{\Omega'} u \le C (|\Omega'|^{1/n} + 1)^{\rho}  \left( \frac{ \|u\|_q}{| \Omega' |^{1/q}} + |\Omega'|^{1/n} \right),
 $$ where $\rho=n/q$ and $C= C(n,q,\alpha, \beta,C_{*})$ if $n > q$, and $\rho= \frac{\tilde{q}}{2\tilde{q} - n}$, $\tilde{q}\in(n/2,n)$, and  $C= C(n,\alpha, \beta,C_{*},\tilde{q})$ if $n=q$.
\label{moserLplemma}
\end{lemma}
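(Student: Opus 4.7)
The plan is to prove the estimate by Moser iteration, seeded by the $L^q$ bound from the preceding lemma and using the lower ellipticity bound $C_*(|{\rm z}|^q - 1) \le \langle a(t,{\rm z}),{\rm z}\rangle$ together with the growth condition (H5) on $f$. The key input for each test-function manipulation is the subsolution inequality
$$\int_{\Omega'} \nabla \varphi \cdot a(u,\nabla u)\, dx \le \int_{\Omega'} \varphi\, f(u)\, dx \le \alpha \int_{\Omega'}\varphi u^{q-1}\, dx + \beta \int_{\Omega'}\varphi\, dx,$$
valid for any nonnegative $\varphi \in W^{1,p}_0(\Omega')$ (obtained as limits of bounded test functions).

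For the simple case $n<q$, I would test against $\varphi = u$ directly. Combining the lower ellipticity and (H5) gives
$$C_*\|\nabla u\|_q^q \le C_*|\Omega'| + \alpha\|u\|_q^q + \beta\|u\|_q |\Omega'|^{1/q'},$$
so $\|\nabla u\|_q$ is controlled by $\|u\|_q$ and $|\Omega'|$. Morrey's inequality applied to $u\in W^{1,q}_0(\Omega')$ yields $\|u\|_\infty \le C(n,q)|\Omega'|^{1/n-1/q}\|\nabla u\|_q$, and a direct rearrangement together with Young's inequality produces the claimed form $\sup u \le C\|u\|_q + D|\Omega'|^{1/q}$.

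For $n\ge q$, I would run De Giorgi--Moser iteration. Fix $\gamma\ge 1$ and for a truncation parameter $M$ test against $\varphi = \min\{u,M\}^{\gamma} \in W^{1,p}_0(\Omega')$. Since $a(u,\nabla u)\cdot\nabla u \ge C_*(|\nabla u|^q-1)$ and $f(u)\le \alpha u^{q-1}+\beta$, passing $M\to\infty$ (justified by monotone convergence and the preceding $L^q$ bound) produces
$$\int_{\Omega'} |\nabla u|^q u^{\gamma-1}\, dx \le C(\alpha,\beta,\gamma)\int_{\Omega'}\!\bigl(u^{\gamma+q-1}+1\bigr)\, dx.$$
Writing the left side as $\bigl(\tfrac{q}{\gamma+q-1}\bigr)^q\|\nabla u^{(\gamma+q-1)/q}\|_q^q$ and applying the Sobolev embedding $W^{1,q}_0\hookrightarrow L^{q^*}$ when $n>q$ (with $q^*=nq/(n-q)$), or $W^{1,\tilde q}_0\hookrightarrow L^{\tilde q^*}$ for some $\tilde q\in(n/2,n)$ via H\"older when $n=q$, boosts the integrability from $L^{\gamma+q-1}$ to $L^{(\gamma+q-1)q^*/q}$. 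Iterating along the geometric sequence $\gamma_{j+1}=(q^*/q)\gamma_j$ starting from $\gamma_0$ compatible with the $L^q$ bound, and tracking constants through the geometric series $\sum (q/q^*)^j$, yields a finite supremum bound of the stated product form.

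The main obstacle, beyond the familiar bookkeeping of Moser iteration, is tracking the exact power $\rho=n/q$ (respectively $\rho=\tilde q/(2\tilde q - n)$ at the borderline $n=q$) and the factor $(|\Omega'|^{1/n}+1)^\rho$; these come from the scaling of the Sobolev constant on $\Omega'$ and the additive $|\Omega'|$ term surviving each iteration step from the $-C_*|\Omega'|$ contribution of the ellipticity and the $\beta\int\varphi$ contribution of $f$. The secondary technical point is the admissibility of the test functions: each iterate must be truncated at level $M$ and cut back to the zero boundary trace before letting $M\to\infty$, using that the $L^q$ estimate from the previous lemma ensures all intermediate $L^{\gamma}$ norms remain finite.
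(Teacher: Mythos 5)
Your overall plan matches the paper's appendix proof in outline: for $n<q$ test directly with $\varphi=u$ and invoke Morrey, and for $n\ge q$ run a Moser iteration seeded by the $L^q$ bound of the preceding lemma, with the borderline case $n=q$ handled through the Sobolev embedding for a $\tilde q\in(n/2,n)$ together with H\"older. However, the iteration as you set it up does not close. Testing with $\varphi=\min\{u,M\}^{\gamma}$ (no additive shift), letting $M\to\infty$, and applying Sobolev produces an inequality of the form
$$\|u\|_{s q^*}^{s q} \;\le\; C(s)\bigl(\|u\|_{s q}^{s q}+|\Omega'|\bigr), \qquad s=\tfrac{\gamma+q-1}{q},$$
and iterating $s\mapsto \chi s$ with $\chi=q^*/q$ leaves an additive error of size $|\Omega'|^{1/(\chi^m q)}$ at the $m$-th step. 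This quantity tends to $1$, not $0$, as $m\to\infty$, so the ``geometric series'' you invoke actually diverges; in this form the iteration yields no finite bound on $\sup u$.

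The missing device is the shift $u\mapsto u+K$, which is exactly what the paper's test functions $P(u)=\min\{(u+K)^r,\ell^{r-1}(u+K)\}$ and $G(u)=\min\{(u+K)^{\gamma},\ell^{\gamma-1}(u+K)\}-K^{\gamma}$ encode. Because $u+K\ge K$ on $\Omega'$, one has $K^{r}|\Omega'|^{1/(\chi q)}\le \|u+K\|_{rq}^{r}\,|\Omega'|^{(1/\chi-1)/q}$, which converts the additive $|\Omega'|$ term into a multiplicative factor and produces the closed, purely multiplicative recursion $\|u+K\|_{r\chi q}\le r^{1/r}H_0^{1/r}\|u+K\|_{rq}$ of \eqref{ineq4}. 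The exponent $\rho=\chi/(\chi-1)=n/q$ then arises from the convergent series $\sum_j\chi^{-j}$, and the factor $(|\Omega'|^{1/n}+1)^{\rho}$ comes from the eventual choice $K=|\Omega'|^{1/n}$ when estimating $H$. Your $n<q$ step is fine, and the $\tilde q$ modification you describe for $n=q$ is the one the paper uses; it is only the treatment of the additive error in the iteration that needs repair.
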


\noindent From these two lemmas we get, for $n < q$, that
\begin{equation} \label{sup1}
 \sup_{\Omega'} u \le C M(\Omega') + D |\Omega'|^{1/q},
\end{equation} where $C=C(n,q,\alpha,\beta,C_{*})$ and
$D=D(n,q,\alpha,\beta,C_{*})$. For $n \ge q$, it follows that
\begin{equation} \label{sup2} \sup_{\Omega'} u \le C (|\Omega'|^{1/n} + 1)^{\rho}  \left( \frac{ M(\Omega')}{| \Omega' |^{1/q}} + |\Omega'|^{1/n} \right), \end{equation}  where
$C=C(n,q,\alpha,\beta,C_{*})$ if $n > q$ and
$C=C(n,\alpha,\beta,C_{*},\tilde{q})$ if $n=q$. Since
$\lambda_{B'} = \lambda_{B_1} / |B'|^{q/n}$, where $B_1$ is
the unit ball, we have
$$M(\Omega') \le  E |\Omega'|^{\frac{1}{q}+\frac{1}{n}} \quad {\rm if } \; |\Omega'| \le |\Omega|, $$
where $E$ is a constant that depends only on $n,q,\alpha,\beta,C_{*}$ and $|\Omega|$. Using this and inequalities \eqref{sup1} and
\eqref{sup2}, we obtain
\begin{equation} \label{sup3}
\sup u \le C |\Omega'|^{1/q} \quad {\rm for} \; n < q \quad {\rm
and} \quad \sup u \le C |\Omega'|^{1/n} \quad {\rm for}  \; n
\ge q, \end{equation} where $C$ depends only on $n$, $q$, $\alpha$, $\beta$, $C_{*}$, and $\Omega$.
Hence, if $(\Omega_n)$ is a sequence of domains such that
$|\Omega_n|\to 0$ and $(u_n)$ a sequence of solutions of
$(P_{\Omega_n})$, then $\sup |u_n| \le C|\Omega_n|^{\sigma} \to 0$, where $\sigma = 1/q$ or $\sigma=1/n$.

\

\noindent Now we recall some well-known results that appear in many forms.

\begin{lemma} Let $u$ be a weak solution of {\rm (\ref{eq1})} in $W_0^{1,p}$.
Then
$$ \int_{\Omega_t} -u f(u) +  \nabla u \cdot a( u, \nabla u ) \; dx \; = \;
- t \int_{\Omega_t} f(u) \; dx \quad \forall ~t \ge 0 \, ,$$
where $\Omega_t = \{ x \in \Omega \, : \, u(x) > t \} $.
\label{divergencia}
\end{lemma}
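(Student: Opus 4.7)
The plan is to plug the truncated test function $\varphi_t := (u-t)^+$ into the weak formulation of $(\text{P}_\Omega)$. Since $u \in W_0^{1,p}(\Omega)$ and $t \ge 0$, a standard truncation argument gives $\varphi_t \in W_0^{1,p}(\Omega)$, with
$$\nabla \varphi_t = \nabla u \cdot \chi_{\Omega_t} \quad \text{a.e.\ on } \Omega.$$
Hence $\varphi_t$ is an admissible test function and vanishes outside $\Omega_t$, which is exactly what localizes the integrals to $\Omega_t$.

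Substituting into the weak identity
$$\int_\Omega \nabla \varphi_t \cdot a(u, \nabla u)\, dx \;=\; \int_\Omega \varphi_t\, f(u)\, dx,$$
the left-hand side becomes $\int_{\Omega_t} \nabla u \cdot a(u,\nabla u)\, dx$, while the right-hand side becomes
$$\int_{\Omega_t} (u-t)\, f(u)\, dx \;=\; \int_{\Omega_t} u f(u)\, dx \;-\; t \int_{\Omega_t} f(u)\, dx.$$
Rearranging gives the claimed identity.

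The only nontrivial point is the admissibility of $\varphi_t = (u-t)^+$ as a test function: one must verify that truncation preserves membership in $W_0^{1,p}(\Omega)$ and yields the expected distributional gradient. This is the classical Stampacchia-type truncation lemma, so it is not really an obstacle, merely something to cite. No other hypotheses from (H1)--(H5) are needed for this particular step, since the identity is purely a consequence of the weak formulation tested against $(u-t)^+$.
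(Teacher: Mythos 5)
Your proof is correct and is essentially identical to the paper's: the paper also tests the weak formulation against $\varphi = (u-t)\chi_{\{u>t\}} = (u-t)^+$, uses the same computation of $\nabla\varphi = \chi_{\{u>t\}}\nabla u$ via the Stampacchia truncation lemma, and rearranges to obtain the identity.
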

\begin{proof}
Let $\psi:\mathbb{R} \to \mathbb{R}$ be the function defined by
$\psi(s) = (s-t) \chi_{\{s > t\}}(s)$. Consider $\varphi: \Omega
\to \mathbb{R}$ given by $\varphi(x)=\psi(u(x))$. Since $\psi$ is
a Lipschitz function and $t > 0$, $\varphi \in W_0^{1,p}$. Furthermore,
$$ \varphi = (u-t)\chi_{\{u > t\}} \quad {\rm and} \quad
\nabla \varphi = \chi_{\{u > t\}}\nabla u \,.$$ Then, since $u$ is a
weak solution of {\rm (\ref{eq1})},
$$ \int_{\Omega} \chi_{\{u > t\}} \nabla u \cdot a( u, \nabla u) \; dx \; = \;
\int_{\Omega} f(u) (u-t)\chi_{\{u > t\}} \; dx \, , $$ proving the
lemma.   \end{proof}

\begin{lemma} Assuming the same hypotheses as in the last lemma,
$$ \int_{\{u=t\}} \frac{\nabla u \cdot a( u,\nabla u)}{ |\nabla u | }\; dH^{n-1} =  \int_{\Omega_t} f(u) \; dx$$
for almost every $t \ge 0$. If $u$ satisfies $u=c$ on $\partial \Omega$, $c\in \mathbb{R}$, then this identity holds for almost every $t \ge c$. \label{divergencelemma}
\end{lemma}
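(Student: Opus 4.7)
The plan is to differentiate the identity from Lemma \ref{divergencia} with respect to $t$. Rewriting that lemma, we obtain
\[
\Phi(t) := \int_{\Omega_t} \nabla u \cdot a(u,\nabla u)\,dx \;=\; \int_{\Omega_t} (u-t)\, f(u)\,dx \;=: \Psi(t),
\]
so the task reduces to computing $\Phi'(t)$ and $\Psi'(t)$ and matching them.

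For the left-hand side, I would invoke the coarea formula for Sobolev functions: since $u \in W^{1,p}(\Omega)$ and $\nabla u \cdot a(u,\nabla u)$ is locally integrable, the function $\Phi$ can be written as
\[
\Phi(t) \;=\; \int_t^{\infty}\!\!\int_{\{u=s\}} \frac{\nabla u \cdot a(u,\nabla u)}{|\nabla u|}\; dH^{n-1}\, ds,
\]
which is absolutely continuous in $t$ and therefore differentiable for a.e.\ $t$ with
\[
-\Phi'(t) \;=\; \int_{\{u=t\}} \frac{\nabla u \cdot a(u,\nabla u)}{|\nabla u|}\, dH^{n-1}.
\]
For the right-hand side, I would differentiate directly, exploiting the key cancellation that the integrand $(u-t)f(u)$ vanishes on the boundary $\{u=t\}$ of the domain of integration. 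So the boundary contribution drops out, and I am left only with the derivative of the integrand in $t$,
\[
\Psi'(t) \;=\; \int_{\Omega_t} \frac{\partial}{\partial t}\bigl[(u-t)f(u)\bigr]\,dx \;=\; -\int_{\Omega_t} f(u)\,dx.
\]
Equating $\Phi'(t)=\Psi'(t)$ and changing sign yields the claimed identity.

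The main obstacle is rigorously justifying the two differentiations. For $\Phi'$, one needs the coarea formula in a form that applies to Sobolev functions; the integrand $\nabla u \cdot a(u,\nabla u)$ is nonnegative and $L^1$ by \eqref{alqestimate}, which is enough. For $\Psi'$, a clean way is to write $\Psi(t) = \int_{\{u>t\}} u f(u)\,dx - t \int_{\{u>t\}} f(u)\,dx$, apply the layer-cake / coarea formula to each piece and differentiate, or, more simply, note that $|\Psi(t+h)-\Psi(t) + h\!\int_{\Omega_t}\!f(u)|$ is controlled by $\int_{\{t<u<t+h\}} (|u-t|+h)f(u)\,dx = o(h)$ for a.e.\ $t$, using absolute continuity of $t \mapsto \int_{\{u>t\}} f(u)\,dx$. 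For the last sentence, if $u=c$ on $\partial\Omega$ and $t\ge c$, then the test function $(u-t)^+$ used in Lemma \ref{divergencia} still belongs to $W_0^{1,p}(\Omega)$ (it vanishes on $\partial\Omega$), so the previous lemma holds for such $t$ and the same differentiation argument applies verbatim.
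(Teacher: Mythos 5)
Your proof is correct and follows essentially the same route as the paper: both start from Lemma \ref{divergencia}, apply the coarea formula to express the Dirichlet-integral term as an absolutely continuous function of the level, and control the difference quotient of the source term by using that $(u-t)f(u)$ vanishes on $\{u=t\}$ together with $|\{t<u\le t+h\}|\to 0$. The treatment of the case $u=c$ on $\partial\Omega$ differs only cosmetically (you keep $u$ and observe $(u-t)^+ \in W_0^{1,p}(\Omega)$ for $t\ge c$, whereas the paper shifts to $u-c$).
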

\begin{proof}
For $t_1 < t_2$, from Lemma \ref{divergencia}, we get
\begin{align*}
\int_{A_{t_1t_2}} \! \! - u f(u) + \nabla u \cdot a( u, \nabla u)
\, dx &= t_2 \! \int_{\Omega_{t_2}} \! f(u) \, dx - t_1
\int_{\Omega_{t_1}} f(u) \,
dx \\[5pt]
  &=  (t_2 - t_1) \! \int_{\Omega_{t_2}} \! \! \! f(u) \,  dx - t_1 \! \int_{A_{t_1t_2}} \! \! \! f(u) \, dx,
\end{align*}
where $A_{t_1t_2} = \{ t_1 < u \le t_2\}$. Then,
\begin{equation}
\int_{A_{t_1t_2}}  \nabla u \cdot a \; dx  = (t_2 - t_1)
\int_{\Omega_{t_2}}  f(u) \; dx \; +  \int_{A_{t_1t_2}}  (u-t_1)
f(u) \; dx. \label{estrategica1}
\end{equation}
Hence, using the coarea formula, we obtain
$$\displaystyle \int_{t_1}^{t_2} \int_{\{u=t\}} \! \frac{(\nabla u \cdot a) |\nabla u |^{-1}}{t_2-t_1}  dH^{n-1} dt =  \int_{\Omega_{t_2}}
\! f(u) \; dx \; + \frac{\displaystyle \int_{A_{t_1t_2}} \!
(u-t_1) f(u) \; dx}{t_2-t_1}.$$ Making $t_2 \to t_1$, the integral
in the left hand side converges to the integrand for almost every
$t_1$ and the integral over $\Omega_{t_2}$ converges to a integral
over $\Omega_{t_1}$. The last integral goes to zero, since
$$ \left|  \int_{A_{t_1t_2}}  \frac{(u-t_1)}{t_2-t_1} f(u) \;
dx \right| < \left|  \int_{A_{t_1t_2}} f(u) \; dx \right| \le
f(t_2) |A_{t_1t_2}| \to 0,$$ completing the proof. For the case $u
= c$ on $\partial \Omega$, note that $u-c \in W^{1,p}_0(\Omega)$
is a weak solution of $-{\rm div} \; \bar{a}(v,\nabla v) =
\tilde{f}(v)$, where $\bar{a}(t,{\rm z})= a(t+c,{\rm z})$ and
$\tilde{f}(t)=f(t+c)$. Then, from the previous case, we get
result.
\end{proof}

The following statement is a direct consequence of Brothers
and Ziemer's result (see Lemma 2.3 and Remark 4.5 of \cite{BZ}).

\begin{proposition} Let $u \in W_0^{1,p}(\Omega)$ be a nonnegative function and suppose that $a=a({\rm z})$, $a$ satisfy $(H3)$, $a ({\rm z}) \cdot {\rm z} \in C^2(\mathbb{R}^n \backslash \{0\})$,  $(a({\rm z}) \cdot {\rm z})^{1/p}$ is convex.
If the symmetrization $u^{\sharp}$ is equal to some radial
solution of $({\rm P}_B)$ on $\Omega^{\sharp}_{t_1t_2} = \{ x \in
\Omega^{\sharp} \; : \; t_1 < u^{\sharp}(x) < t_2 \}$ and
$$ \int_{ t_1 < u < t_2 } \nabla u \cdot a(\nabla u ) \; dx \; = \; \int_{ t_1 <
u^{\sharp} < t_2 } \nabla u^{\sharp} \cdot a(  \nabla u^{\sharp} ) \; dx,
$$ for some $0 \le t_1 < t_2 \le \max u < + \infty $, then there is a
translate of $u^{\sharp}$ which is almost everywhere equal to $u$
in $\{ t_1 < u < t_2 \}$. $(({\rm P}_B)$ is the problem $({\rm
\tilde{P}}_B)$ with $\tilde{a}$ replaced by $a.)$
\label{continuidade}
\end{proposition}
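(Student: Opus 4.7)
The result is meant to be a direct application of the Brothers--Ziemer rigidity theorem for the equality case of a generalized P\'olya--Szeg\"o inequality, applied with integrand $B(\mathrm{z}) := a(\mathrm{z}) \cdot \mathrm{z}$. The plan is: (i) show $B$ is eligible for the P\'olya--Szeg\"o machinery on the level annulus; (ii) rephrase the hypothesis as saying that equality is attained there; (iii) verify the nondegeneracy of $\nabla u^\sharp$ needed to rigidify the equality case, and conclude.

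For (i), $B$ is continuous, vanishes at $0$, and is $C^2$ away from $0$; the convexity of $B^{1/p}$ with $p \ge 1$ forces $B = (B^{1/p})^p$ to be convex in $\mathrm{z}$; and $B$ is nondecreasing in $|\mathrm{z}|$ by (H3). Thus $B$ falls in the class considered in Remark~\ref{obs2}, which yields
\begin{equation*}
\int_{\{t_1 < u < t_2\}} \! \nabla u \cdot a(\nabla u) \, dx \; \ge \; \int_{\{t_1 < u^\sharp < t_2\}} \! \nabla u^\sharp \cdot a(\nabla u^\sharp) \, dx,
\end{equation*}
with equality by hypothesis. For (iii), since on $\Omega^\sharp_{t_1 t_2}$ the function $u^\sharp$ equals a radial weak solution $U(r)$ of $({\rm P}_B)$, the radial form of the equation,
\begin{equation*}
-\bigl( r^{n-1} e(|U'(r)|) \, U'(r) \bigr)' = r^{n-1} f(U(r)),
\end{equation*}
integrated from $0$ and combined with $f(U)>0$ (from (H5)), gives $U'(r) < 0$ on the open annulus in the radial coordinate; hence $|\nabla u^\sharp| > 0$ a.e.\ on $\{t_1 < u^\sharp < t_2\}$.

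With the equality in the generalized P\'olya--Szeg\"o inequality and the nondegeneracy of $\nabla u^\sharp$ on the annulus at hand, Lemma~2.3 together with Remark~4.5 of \cite{BZ} produce a translate of $u^\sharp$ that coincides a.e.\ with $u$ on $\{t_1 < u < t_2\}$. The principal difficulty is not in the rigidity theorem itself but in invoking its \emph{local} version: the original Brothers--Ziemer statement concerns equality between the integrals over the whole domain, whereas here equality is assumed only between the integrals over matching level annuli. This is precisely the content of Remark~4.5 in \cite{BZ}, which is why the proposition cites both 2.3 and 4.5. A secondary technicality is ensuring that the non-$p$-homogeneous integrand $a(\mathrm{z})\cdot\mathrm{z}$ still fits this framework; the $C^2$ regularity and the $1/p$-convexity assumptions in the hypothesis are what guarantee this.
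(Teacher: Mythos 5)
Your argument follows the same route as the paper: verify that $B(\mathrm{z}) = a(\mathrm{z})\cdot\mathrm{z}$ falls under the generalized P\'olya--Szeg\"o framework of Remark~\ref{obs2}, use the radial equation (equivalently, Lemma~\ref{divergencelemma} applied to the radial solution $U_1$) together with positivity of $f$ to conclude $\nabla u^{\sharp} \ne 0$ on the annulus, and then invoke Lemma~2.3 and Remark~4.5 of \cite{BZ} to obtain the rigidity. The paper's proof is essentially identical, differing only in that it phrases the nondegeneracy step via the flux identity $\int_{\partial B_t} a(\nabla U_1)\cdot n\,dS = \int_{B_t} f(U_1)\,dx > 0$ rather than integrating the radial ODE explicitly.
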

\begin{proof} Let $U_1$ be the radial solution of $({\rm P}_B)$ such that
$u^{\sharp} =U_1$ on $\Omega^{\sharp}_{t_1t_2}$. From Lemma \ref{divergencelemma},
$$ \int_{\partial B_t} a( \nabla U_1) \cdot n \; dS = \int_{B_t} f(U_1)
dx > 0 \quad \text{ for any } t \in [0,\max U_1),$$ where $B_t =
\{ x: U_1(x) > t \} $. Hence $a( \nabla U_1) \ne 0$ and, therefore,
$\nabla U_1(x)=0$ for any $x \ne 0$. Then $\nabla u^{\sharp}(x)
\ne 0$ on the closure of $\Omega^{\sharp}_{t_1t_2}$. Since $|\{
\nabla U_1 = 0\} | =0$, according to a result of Brothers
and Ziemer (see Lemma 2.3 and Remark 4.5 of \cite{BZ}),
the equality between the
Dirichlet integrals holds only if $u$ is equal to some translation
of $u^{\sharp}$ almost everywhere on $\{ t_1 < u < t_2 \}$.
\end{proof}

Next we present some comparison results about solutions.

\begin{lemma} Consider the radial functions $u_1(x)=w_1(|x|) \in C^1(B_{R_1})$
and $u_2(x)=w_2(|x|) \in H^1(B_{R_2})$, where $B_{R_i}$ is the
ball centered at $0$ with radius $R_i$, $w_1:[0,R_1]\to
\mathbb{R}$ is decreasing, $w_1'(r) < 0$ for $r >0$, $w_2:[0,R_2]\to \mathbb{R}$ is nonincreasing, and
$R_1> R_2$. Suppose that $m=w_1(R_1)=w_2(R_2)$ and
\begin{equation} \label{integraldenivel} \int_{\{ u_1 = t \}}
\frac{a(u_1, \nabla u_1) \cdot \nabla u_1}{|\nabla u_1|} \; dH^{n-1}
\ge \int_{\{ u_2 = t \}} \frac{a(u_2, \nabla u_2) \cdot \nabla
u_2}{|\nabla u_2|} \; dH^{n-1} \end{equation}  for almost all $t
\in [m,+\infty)$, where $H^{n-1}$ is the $(n-1)$-dimensional
Hausdorff measure and $a=a(t,{\rm z})$ is a function that satisfies (H3). Then $u_1
> u_2$ in $B_{R_2} \backslash \{0\}$.
\label{comparacaoradial}
\end{lemma}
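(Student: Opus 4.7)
I would argue by contradiction using the level-radius $R_i(t) := (\mu_{u_i}(t)/\omega_n)^{1/n}$, the radius of the open ball $\{u_i > t\}$. Because $w_1$ is strictly decreasing (and $w_2$ is continuous, being a one-dimensional $H^1$ function), the conclusion $u_1 > u_2$ on $B_{R_2}\setminus\{0\}$ reduces to proving $R_1(t) > R_2(t)$ for every $t \in [m, \max u_2]$ (with a minor technicality when $w_2$ has flat plateaus).

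First I translate the flux hypothesis into an ODE-type inequality. Since $u_i$ is radial, on each level set $\{u_i = t\} = \partial B_{R_i(t)}$ the gradient has constant modulus $|w_i'(R_i(t))| = 1/|R_i'(t)|$ (from differentiating $t = w_i(R_i(t))$). Writing $A(t,s) := e(t,s)\,s$, so that $a(t,\mathrm{z}) \cdot \mathrm{z} / |\mathrm{z}| = A(t, |\mathrm{z}|)$ and, by (H3), $A(t,\cdot)$ is $C^1$ and strictly increasing on $(0, \infty)$, the hypothesis becomes
\[
R_1(t)^{n-1}\, A\!\left(t, \frac{1}{|R_1'(t)|}\right) \ge R_2(t)^{n-1}\, A\!\left(t, \frac{1}{|R_2'(t)|}\right) \quad \text{for a.e.\ } t \ge m.
\]

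At $t = m$, $R_1(m) = R_1 > R_2 \ge R_2(m)$. Assume for contradiction that the conclusion fails, and let $\tau^* := \inf\{t > m : R_1(t) \le R_2(t)\}$. Then $\tau^* > m$, $R_1(\tau^*) = R_2(\tau^*) =: r^* > 0$, and $\eta(t) := R_1(t) - R_2(t) > 0$ on $[m, \tau^*)$ with $\eta(\tau^*) = 0$. On a left-neighborhood of $\tau^*$, both $R_i$ stay close to $r^*$, so $(R_2/R_1)^{n-1}(t) \ge 1 - \kappa\,\eta(t)$ for some $\kappa > 0$, and the hypothesis rearranges to
\[
A\!\left(t, \frac{1}{|R_1'(t)|}\right) \ge \bigl(1 - \kappa\,\eta(t)\bigr)\, A\!\left(t, \frac{1}{|R_2'(t)|}\right).
\]
Applying the mean value theorem to $A(t, \cdot)$ on a compact range where $A_s > 0$ is bounded away from $0$ and $\infty$, I extract a Lipschitz-type estimate $|R_1'(t)| - |R_2'(t)| \le C\,\eta(t)$ for a.e.\ $t$ close to $\tau^*$. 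Since $\eta'(t) = |R_2'(t)| - |R_1'(t)| \ge -C\,\eta(t)$, Gronwall's inequality gives $\eta(\tau^*) \ge \eta(\tau^* - \delta)\, e^{-C\delta} > 0$, contradicting $\eta(\tau^*) = 0$.

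The principal obstacle will be establishing the uniform bounds required for the mean value step. For $u_1$, the hypotheses $u_1 \in C^1$ and $w_1'(r) < 0$ give positive, continuous $|w_1'|$ near $r^*$, hence bounded $|R_1'|$. For $u_2$, although $H^1$ regularity alone does not control $|w_2'|$, the flux inequality itself rules out $|w_2'(R_2(t))|$ vanishing on positive-measure level sets near $\tau^*$ (otherwise the right-hand side would blow up while the left remains finite). A brief preliminary reduction handles the case where $w_2 \equiv m$ on an outer subinterval $[\rho, R_2]$: one simply replaces $R_2$ by $\rho = \inf\{r : w_2(r) = m\}$ without altering the hypothesis.
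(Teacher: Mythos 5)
Your overall strategy -- reduce to the level-radius functions $r_i(t) := (\mu_{u_i}(t)/\omega_n)^{1/n}$, convert the flux hypothesis into a pointwise a.e.\ inequality, extract a differential inequality of the form $r_2'(t) - r_1'(t) \le C\bigl(r_1(t) - r_2(t)\bigr)$, and close with a Gronwall-type integration against $e^{Ct}$ to force a contradiction at the crossing time -- is precisely the argument in the paper's proof. The sign bookkeeping, the identification of the flux as $n\omega_n r_i^{n-1} |a(t, w_i'(r_i))|$, and the handling of plateaus of $w_2$ via the one-sided BV/jump structure of $r_2$ are all sound (the paper also splits $r_2 e^{Ct}$ into absolutely continuous plus singular parts to justify the Gronwall step, which you gloss but which works out).

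The genuine gap is in the mean value step, and the heuristic you offer to patch it does not work. You need a uniform positive lower bound on the derivative of the (increasing) map $y \mapsto A(t,-1/y)$ (equivalently, the paper's $d(t,y)=[b(t,-1/y)]^{1/(n-1)}$) somewhere in the interval $[r_1'(t), r_2'(t)] \subset (-\infty,0)$, in the nontrivial case $r_2' > r_1'$. The difficulty is controlling the \emph{upper} endpoint $r_2'(t) = 1/w_2'(r_2(t))$: when $|w_2'(r_2(t))|$ is very large, $r_2'(t)$ is close to $0^-$, so the interval stretches toward $0$ where nothing in (H3) alone controls the derivative uniformly. Your proposed patch -- "the flux inequality rules out $|w_2'|$ vanishing, otherwise the right-hand side would blow up'' -- is backwards on both counts: if $|w_2'(r_2(t))| \to 0$, then since $a(t,0)=0$ by (H3) the right-hand side of the flux inequality tends to $0$ rather than blowing up, so the inequality stays harmless; and in any case that scenario corresponds to $r_2' \to -\infty$, i.e.\ $r_2' < r_1'$, which is the \emph{trivial} case where $r_2'-r_1'<0$ and no estimate is needed. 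The dangerous regime is $|w_2'|$ large (i.e.\ $r_2'$ near $0^-$), and the flux inequality gives you no upper bound on $|w_2'|$ there.

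The paper circumvents this neatly: since the auxiliary map $y\mapsto d(t,y)$ is increasing, one has
\[
d(t,r_2') - d(t,r_1') \;\ge\; d\!\left(t,\tfrac{r_1'+r_2'}{2}\right) - d(t,r_1') \;=\; \int_{r_1'}^{(r_1'+r_2')/2} \partial_y d(t,y)\,dy,
\]
and the half-interval $[r_1', (r_1'+r_2')/2]$ is contained in $[r_1', r_1'/2]$ whenever $r_2' > r_1'$ (both being negative). That interval is controlled entirely by $r_1'$, which lies in a fixed compact subinterval of $(-\infty,0)$ because $w_1 \in C^1$ with $w_1' < 0$ on $[r_0,R_1]$; hence $\partial_y d$ admits a uniform positive lower bound there, and one still recovers the factor $(r_2'-r_1')/2$ from the length of the half-interval. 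No bound on $w_2'$ is required. To repair your proof you should replace the naive mean value theorem on $[r_1',r_2']$ by this one-sided estimate on the half-interval anchored at $r_1'$.
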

\begin{proof} We prove by contradiction. So there exists some
$r_0 \in (0,R_2)$ such that $w_1(r_0) \le w_2(r_0)$.
The hypotheses imply that $$w_1(R_2) > w_1(R_1)=w_2(R_2).$$ Hence,
from the continuity of $w_1$ and $w_2$, we can assume that
\begin{equation} w_1(r_0)= w_2(r_0) \quad \text{ and } \quad w_1(r) > w_2(r) \quad \text{ for } r \in
(r_0,R_2]. \end{equation} Now defining $b(t,|{\rm z}|)=|a(t,{\rm z})|$, we have
$$ b(u_i, |\nabla u_i|) = |a(u_i, \nabla u_i)| = \frac{ a(u_i, \nabla u_i) \cdot \nabla u_i }{|\nabla u_i|}   \quad \text{for} \;  i=1,2. $$
Observe that $b=b(t,s) \in C^0( \mathbb{R} \times [0,+\infty)) \cap C^1(\mathbb{R} \times (0,+\infty))$ is positive for $s\ne 0$ and increasing in $s$. Hence, using \eqref{integraldenivel} and
$w'_i(|x|)=-|\nabla u_i(x)|$, we get
$$ b(t, - w_1'(r_1(t))) \, r_1^{n-1}(t) \ge b(t, - w_2'(r_2(t))) \, r_2^{n-1}(t)$$ a.e. on $I = [m,t_0]$,
where $t_0=w_1(r_0)=w_2(r_0)$ and $r_i$ is some kind of inverse of $w_i$ given by $r_i(t)=\inf\{r \; | \; w_i(r) \le t\} = (\mu_{u_i}(t)/\omega_n)^{1/n}$. Notice
that $r_1$ is decreasing and $r_2$ is nonincreasing and, therefore, they are differentiable a.e. on $I$ with
$r_i'(t)=(w_i'(r_i(t))^{-1}$. Then
$$ b\left(t, - \frac{1}{r'_1(t)}\right) \, r_1^{n-1}(t) \ge b\left(t, - \frac{1}{r'_2(t)}\right) \, r_2^{n-1}(t) \quad \text{a.e. \, on  \,} I. $$
Defining $d:\mathbb{R} \times (-\infty,0) \to \mathbb{R}$ by $ d(t, y)=
[b(t, -1/y)]^{1/(n-1)}$, we obtain
$$ d(t, r'_1(t)) \; r_1(t) \ge d(t, r'_2(t)) \; r_2(t) \quad \text{a.e. \, on \,} I $$
and, therefore, $$  d(t, r'_1) \; (r_1-r_2) \ge (d(t, r'_2)- d(t, r'_1)) \;
r_2 \quad \text{a.e. \, on \,} I. $$
 Since $r_2 \ge r_0 > 0 $,
$d(t, r'_1(t))$ is continuous and positive in $I$, and $r_1 - r_2 \ge 0$, there exist $c_1 > 0$ such that
\begin{equation} \label{dinequa1} c_1 \;
(r_1-r_2) \ge (d(t, r'_2)- d(t, r'_1)) \; r_0 \quad \text{a.e. \, on \,}
I. \end{equation}
We prove now that, for some suitable constant $C >0$,
\begin{equation} \label{dinequa2} C (r_1-r_2) \ge r'_2 - r'_1  \quad \text{a.e. \, on \,} I.\end{equation}
For that note first that if $t \in I$ satisfies $r'_2(t) \le
r'_1(t)$, the inequality is trivial for any $C > 0$ since $r_1 \ge
r_2$ on $I$. In the case $r'_2(t) > r'_1(t)$,
$$d(t,r'_2)-d(t,r'_1)  = \int_{r'_1}^{r'_2} \frac{ \partial d}{\partial y}(t,y) \, dy  \ge \int_{r'_1}^{\frac{r'_2+r'_1}{2}} \frac{[b\left( t, -\frac{1}{y} \right)]}{n-1}^{\frac{2-n}{n-1}}\cdot \frac{b_{s}\left(t, -\frac{1}{y} \right)}{y^2} \, dy $$
since the integrand is positive and $(r'_2+r'_1)/2 \le r'_2$. From the $C^1$ regularity of $w_1$ and $w_1' < 0$, it follows that the interval $[r_1'(t), r_1'(t)/2]$ is contained in some interval
$[y_1,y_2]$, where $y_2 <0$, for any $t \in I$. Then
$$ [r'_1, (r'_1 + r'_2)/ 2] \subset [r'_1, r'_1/ 2] \subset [y_1,y_2] \subset (-\infty, 0) \quad {\rm for \; any } \; t \in I,$$
and, using that $|a|$ and $\partial_{s}|a(t,s{\rm z})|$ are positive and continuous for $s, {\rm z} \ne 0$,  we get
$$b\left(t, -\frac{1}{y}\right) \ge \min_{[y_1,y_2]} b\left(t, -\frac{1}{y}\right) \ge E_1 := \min_{ \frac{1}{|y_1|} \le |{\rm z}| \le  \frac{1}{|y_2|}} |a(t,{\rm z})| > 0 $$
and
$$ b_s \left(t, -\frac{1}{y}\right) \ge \min_{[y_1,y_2]} b_s \left(t, -\frac{1}{y}\right) \ge E_2 := \min_{ \frac{1}{|y_1|} \le |{\rm z}| \le  \frac{1}{|y_2|}} \partial_{s}|a(t,s{\rm z})|\Big|_{s=1} > 0 $$
for $y \in  [r'_1, (r'_1 + r'_2)/ 2]$.
Hence,
$$ d(t,r'_2)-d(t,r'_1)  \ge  \int_{r'_1}^{(r'_1+r'_2)/2} \frac{E_1^{\frac{2-n}{n-1}}}{n-1} \cdot \frac{E_2 }{y^2} \, dy \ge  \frac{E_1^{\frac{2-n}{n-1}} E_2}{(n-1)\, y_1^2}  \cdot \frac{(r'_2 - r'_1)}{2}. $$
From this and \eqref{dinequa1}, we get \eqref{dinequa2} with $C= 2c_1(n-1)y_1^2/(r_0E_1^{\frac{2-n}{n-1}}E_2)$.
Multiplying \eqref{dinequa2} by $e^{Ct}$,
it follows that
$$ \frac{d  }{dt}(r_1 e^{Ct})  \ge \frac{d  }{dt}(r_2 e^{Ct}) \quad \text{a.e. \, on \,} I.$$
Observe that $\displaystyle \int_m^{t_0} (r_2 e^{Ct})' dt \ge
r_2e^{Ct}\big|_{m}^{t_0}$, since $r_2$ is decreasing and $e^{Ct}$
is a $C^1$ function. To prove that, we can split $r_2 e^{Ct}$ into
a singular function and an absolutely continuous function, apply
the Fundamental Theorem of Calculus, obtaining an identity for the
second part and, using a sequence of increasing $C^1$ functions
that converges uniformly to $r_2$, an inequality for the first
part.
\\ Therefore
$$ r_1e^{Ct}\big|_{m}^{t_0}= \int_m^{t_0} \frac{d  }{dt}(r_1 e^{Ct})
dt   \ge \int_m^{t_0} \frac{d  }{dt}(r_2 e^{Ct})dt \ge
r_2e^{Ct}\big|_{m}^{t_0}.$$ Hence, using $r_1(t_0) = r_2(t_0) =
r_0$, we get $r_1(m) \le r_2(m)$. But this contradicts $r_1(m)=R_1
> R_2 = r_2(m)$.
\end{proof}

\section{Comparison results to the p-laplacian}
\label{SC}

We treat in this section the special case where the differential
part of \eqref{eq1} and $({\rm \tilde{P}}_B)$ is the $p$-laplacian
operator and, in addition to the hypotheses (H1) and (H5), we
suppose that $f(t)/t^{p-1}$ is decreasing. Then, we can obtain a
solution to the problem $({\rm \tilde{P}}_B)$ minimizing the
functional
\begin{equation} J_{B} (v) \; = \; \int_{B} \frac{1}{p} | \nabla v
|^p  - F(v) \; dx, \label{funcional}
\end{equation} where $F(t) = \int_0^t f(s) \; ds $. Let $\tilde{U}_B$ be
a minimum of $J_B$. Since $f(t)/t^{p-1}$ is decreasing,
$\tilde{U}_B$ is the unique solution to $({\rm \tilde{P}}_B)$ (see
\cite{BO} and \cite{BK}). Then $\tilde{U}_B=U_B$, where $U_B$ is
defined in (\ref{definicaoU}). This uniqueness result is applied
only in Theorem \ref{teoremaP}.

\begin{remark}
\label{obs1} For any ball $B_r \subset B$ and $\alpha \in
\mathbb{R}$, there is a radial minimizer $w$ of the functional
$$J_{B_r}(v) \; = \; \int_{B_r} \frac{1}{p} | \nabla v
|^p  - F(v) \; dx, $$ such that $w \equiv \alpha$ on $\partial
B_r$. Moreover, if $u$ and $w$ are minimizers of $J_{B_r}$ and $u
> w$ on $\partial B_{r}$, then $u > w$ in $B_{r}$ and $J_{B_r}(u) < J_{B_r}(w)$.
\end{remark}

\noindent The first part of this remark follows from classical
arguments of compactness. To prove that $u > w$, observe that for
any open subset $A \subset B_{r}$, $u$ and $w$ minimizes the
corresponding functional $J_A$ in the set of functions with
prescribed boundary data $v=u$ and $v=w$ on $\partial A$,
respectively. Hence, if $u < w$ for some open set, then the
$v_0=\max\{u,w\}$ is also a minimizer of $J_{B_r}$ in
$W^{1,p}(B_{r})$ and $u$ touches $v_0$ by below in $B_{r}$,
contradicting $-\Delta_p v_0= f(v_0) \ge f(u) = -\Delta_p u $ and
the maximum principle. The inequality $J_{B_r}(u) < J_{B_r}(w)$ is a
consequence of $J_{B_r}(v) < J_{B_r}(v)$ for any $v \in
W^{1,p}(B_{r})$ since $F$ is increasing. The next result does not
requires that $f$ is nondecreasing.

\begin{theorem} Let $\Omega \subset \mathbb{R}^n$ be a bounded domain,
$B$ be a ball such that $|B|=|\Omega|$, and $u$ be a weak
solution of {\rm (\ref{eq1})}, where $div(a(\nabla u))=\Delta_p u $
and $f$ is a nonnegative locally Lipschitz function, possibly
non-monotone, such that $f(t)/t^{p-1}$ is decreasing on
$(0,+\infty)$. Then,
$$ \max u \; \le \; \max U_B, $$
where $U_B$ is the minimizer of the functional given by {\rm
(\ref{funcional})}. \label{teoremaA}
\end{theorem}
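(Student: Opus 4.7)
The plan is to argue by contradiction: assume $M := \max u > \max U_B =: M_B$ and combine the Schwarz symmetrization with Picone's identity to reach an integral inequality which, together with a rescaled strict supersolution built from $U_B$, forces a contradiction.

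First I would establish the main integral estimate. Testing the equation for $u$ with $u$ itself gives $\int_\Omega |\nabla u|^p\,dx = \int_\Omega u f(u)\,dx$, and by equimeasurability this equals $\int_B u^\sharp f(u^\sharp)\,dx$. The P\'olya--Szeg\H{o} principle yields $\int_B |\nabla u^\sharp|^p\,dx \le \int_\Omega |\nabla u|^p\,dx$. Applying Picone's identity to the pair $(u^\sharp,U_B)$ (the ratio $u^\sharp/U_B$ is bounded by standard Hopf-type estimates, so integration by parts against $-\Delta_p U_B = f(U_B)$ is legal) yields $\int_B |\nabla u^\sharp|^p\,dx \ge \int_B (u^\sharp)^p f(U_B)/U_B^{p-1}\,dx$. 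Writing $g(t):=f(t)/t^{p-1}$, these chain together to
\[
\int_B (u^\sharp)^p\bigl[g(u^\sharp)-g(U_B)\bigr]\,dx \;\ge\; 0. \qquad (\star)
\]

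Next I would exploit $M>M_B$ through a scaling competitor. Set $v:=(M/M_B)\,U_B$, so $v\in W_0^{1,p}(B)$ and $v(0)=M=u^\sharp(0)$. A short calculation using $M_B t/M < t$ and $g$ strictly decreasing shows $v$ satisfies $-\Delta_p v = \tilde f(v)$ with $\tilde f(t) := t^{p-1}g(M_B t/M) > t^{p-1}g(t) = f(t)$; hence $v$ is a strict supersolution of $-\Delta_p w = f(w)$ in $B$. From Lemma~\ref{divergencelemma} combined with coarea and the isoperimetric inequality (as in the standard Talenti-type derivation), $u^\sharp$ satisfies the integrated subsolution inequality
\[
\int_{\{u^\sharp=t\}}|\nabla u^\sharp|^{p-1}\,dH^{n-1} \;\le\; \int_{\{u^\sharp>t\}} f(u^\sharp)\,dx \quad\text{for a.e.\ } t\ge 0,
\]
which in its radial form means $-\Delta_p u^\sharp \le f(u^\sharp)$ weakly. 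I would then run a D\'iaz--Sa\'a-type comparison with test functions $\phi_1 = (u^{\sharp\,p}-v^{p})_+/u^{\sharp\,p-1}$ and $\phi_2 = (u^{\sharp\,p}-v^{p})_+/v^{p-1}$: the Picone bilinear form on the left is nonnegative, while the right reduces to $\int (u^{\sharp\,p}-v^{p})_+[g(u^\sharp)-g(v)]\,dx$, which is \emph{strictly} negative on $\{u^\sharp>v\}$ by strict decrease of $g$; combined with the strict supersolution property this forces $\{u^\sharp>v\}$ to have measure zero, so $u^\sharp\le v$ a.e. in $B$.

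To close the contradiction, I use the touching at the origin, $u^\sharp(0)=M=v(0)$: a (weak) subsolution cannot touch a strict supersolution from below at an interior point, since the strict gap $\tilde f(v)>f(v)$ near the origin is incompatible with the integrated inequality for $u^\sharp$ forced by the common value $M$. This contradicts the assumption $M>M_B$ and proves $\max u\le \max U_B$.

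The main obstacle will be the D\'iaz--Sa\'a step: the subsolution property of $u^\sharp$ arises naturally only in the integrated level-set form, while the test function $\phi_2=(u^{\sharp\,p}-v^{p})_+/v^{p-1}$ need not be radially nonincreasing, so testing against it from the integrated inequality requires either a careful radial mollification or, as an alternative route, invoking Lemma~\ref{comparacaoradial} applied to $v$ and $u^\sharp$ after a ball-rescaling that enforces the assumption $R_1>R_2$ and reduces the flux condition to the level-set inequality $(\star)$ combined with the identity for the radial solution $v$.
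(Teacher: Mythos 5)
Your approach is genuinely different from the paper's. The paper never attempts a pointwise comparison $u^{\sharp}\le v$; instead it exploits the minimizing property of $U_B$, P\'olya--Szeg\"o, and the Pohozaev-type identity of Lemma \ref{divergencia} to reach the inequality $\int_{\Omega_t}h_t(u)\,dx\ge\int_{B_t}h_t(U_B)\,dx$ for the auxiliary function $h_t(s)=\frac{(s-t)f(s)}{p}-F(s)$, which is strictly decreasing and negative on $(t,\infty)$ precisely because $f(t)/t^{p-1}$ is decreasing; a short argument on distribution functions then gives the contradiction. That $h_t$-device is exactly what lets the paper avoid the difficulty your Picone/D\'iaz--Sa\'a route runs into.

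The obstacle you flag in your final paragraph is indeed the crux, and neither of your proposed repairs closes it. From Lemma \ref{divergencelemma} and the Talenti/coarea argument, $u^{\sharp}$ is known to satisfy only the integrated level-set inequality $\int_{\{u^{\sharp}=t\}}|\nabla u^{\sharp}|^{p-1}\,dH^{n-1}\le\int_{\{u^{\sharp}>t\}}f(u^{\sharp})\,dx$ for a.e.\ $t$, which is equivalent to the weak inequality tested only against \emph{radial nonincreasing} $\phi\ge0$; since $f$ is not assumed nondecreasing here, $(f(u))^{\sharp}\ne f(u^{\sharp})$ in general and the statement cannot be upgraded to a genuine weak subsolution against arbitrary $\phi\ge0$. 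The D\'iaz--Sa\'a test function $\phi_2=(u^{\sharp\,p}-v^{p})_+/v^{p-1}$, although radial, need not be nonincreasing on $\{u^{\sharp}>v\}$, so it is not admissible, and $u^{\sharp}\le v$ does not follow. The fallback to Lemma \ref{comparacaoradial} also does not apply as stated: that lemma requires strictly nested balls $R_1>R_2$ together with a flux inequality at a.e.\ level, and the ``ball-rescaling'' you mention neither preserves the strict supersolution property of $v$ nor supplies the hypotheses of that lemma. Two smaller issues: the inequality $(\star)$ you derive via Picone is never used in the sequel, and the admissibility of $u^{\sharp\,p}/U_B^{p-1}$ as a test function in $W_0^{1,p}(B)$ is asserted under ``Hopf-type estimates'' without justification; finally, the closing ``touching at the origin'' step, while plausible via the integrated radial inequalities, is only gestured at and needs care near $r=0$, where both gradients vanish and the $p$-Laplacian degenerates.
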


\begin{proof}
Let $u^{\sharp}$ be the Schwarz symmetrization of $u$.
Defining $\Omega_t^{\sharp}=\{ u^{\sharp}
> t \} $, we have that $| \Omega_t^{\sharp} | = |\Omega_t|$.
Therefore, Remark \ref{obs2} implies that
$$ \int_{\Omega_t} F(u) \; dx \; = \; \int_{\Omega_t^{\sharp}} F(u^{\sharp}) \;
dx \quad {\rm for} \; t \ge 0.$$ We also know that
\begin{equation}
 \int_{\Omega_t} | \nabla u |^p \; dx \; \ge \;
\int_{\Omega_t^{\sharp}} | \nabla u^{\sharp} |^p \; dx . \label{S}
\end{equation}
Then, \begin{equation} \int_{\Omega_t} \frac{| \nabla u
|^p}{p} - F(u) \; dx \; \ge \; \int_{\Omega_t^{\sharp}} \frac{|
\nabla u^{\sharp} |^p}{p} - F(u^{\sharp}) \; dx . \label{desig1}
\end{equation}
Now suppose that for some $t \ge 0$, we have $|\Omega_t| = |B_t|$,
where $B_t = \{ U_B > t \}$. In this case $B_t =
\Omega^{\sharp}_t$ and
\begin{equation} \int_{\Omega_t^{\sharp}} \frac{| \nabla u^{\sharp} |^p}{p} - F(u^{\sharp}) \; dx
\; \ge \; \int_{B_t} \frac{| \nabla U_B |^p}{p} - F(U_B) \; dx,
\label{desig2}
\end{equation} otherwise the function $\tilde{u}: B
\to \mathbb{R}$ given by $\tilde{u} = u^{\sharp} \chi_{_{B_t}} +
U_B \chi_{_{B_t^c}} $ is the minimum of $J_B$. Then, from
(\ref{desig1}) and (\ref{desig2}), it follows that
$$ \int_{\Omega_t} \frac{| \nabla u
|^p}{p} - F(u) \; dx \; \ge \; \int_{B_t} \frac{| \nabla U_B
|^p}{p} - F(U_B) \; dx. $$ Hence, using Lemma \ref{divergencia}
and the fact that $u$ and $U_B$ are solutions, we get
\begin{equation} \int_{\Omega_t} \!\!\
\frac{ u f(u) - t f(u)}{p} - F(u) \; dx  \ge  \int_{B_t} \!\!\
\frac{ U_B f( U_B) - t f(U_B)}{p} - F(U_B) \; dx.
\label{DesigualdadeChave}
\end{equation} Define $h_t : [t, + \infty) \to \mathbb{R}$ by
\begin{equation}
 h_t(s) \; = \; \frac{(s-t)f(s)}{p} - F(s).
\end{equation}
Note that $h_t(s)$ is decreasing for $s \ge t$, since
$$ h_t'(s) \; = \; \frac{(s-t)f'(s)}{p} - \frac{(p-1)f(s)}{p}  \; = \;
\frac{(s-t)^{p}}{p}
 \left(\frac{ f(s)}{(s-t)^{p-1}}\right)' \; < \; 0.$$
Furthermore, as $h_t(t)\le 0$, $h_t (s) < 0$ for $s
>t$. Therefore, from (\ref{DesigualdadeChave}), we have
\begin{equation}
\int_{\Omega_t} h_t(u) \; dx \; \ge \; \int_{B_t}h_t(U_B) \; dx ,
\label{desigint}
\end{equation} where
$h_t$ is  decreasing and negative. Suppose that $ \max u
> \max U_B $. Since $|\Omega|=|B|$, the function  $\mu_B(t)=| \{ U_B > t \} |$  is continuous and $\mu_u(t)$
is right continuous, there is $t_0 \ge 0$ such that $\mu_u(t_0) =
\mu_B(t_0)$ and $\mu_u(t) > \mu_B(t)$ for $t > t_0$. Then,
$$ | \{ -h_{t_0} \circ u > s \}| \; > \; | \{ -h_{t_0} \circ U_B > s \} |  \quad {\rm for } \quad s > -h_{t_0}(t_0),$$
since $-h_{t_0}$ is a increasing function. Thus, by Fubini's
Theorem,
$$ -\int_{\Omega_{t_0}} h_{t_0}(u) \; dx \; > \;
-\int_{B_{t_0}}h_{t_0}(U_B) \; dx, $$ contradicting
(\ref{desigint}).   \end{proof}

\begin{remark}
\label{remarkfirst0theorema1}
This result can be extended to the problem \eqref{extensioncaparison} observing first that $q(t):=(f(t)-k(t))/t$ is decreasing.
If $q(t) > 0$ for any $t > 0$, it is immediate from the theorem that $\max u \le \max U$, where $u$ solves \eqref{extensioncaparison}
and $U \in W^{1,p}_0(B)$ is the solution of the symmetrized problem $-\Delta_p V + k(V) = f(V)$ in $B$. If $q(t_0)=0$ for some $t_0 \ge 0$, the maximum principle implies that
$u, U \le t_0$. Hence taking $u_m$ and $U_m$, the sequence of solutions of $-\Delta_p v = \max \{ f(v) -k(v), 0 \} + 1/m$ in $\Omega$ and $B$ respectively,
we have $u_m \le U_m$, $u_m \to u$ and $U_m \to U$ monotonically, proving the inequality. A related result with this one is stated in \cite{D}. For instance, if $f$ is a positive constant,
Theorem 2 of that work give more relations between $u$ and $U$.
\end{remark}

\begin{corollary}
\label{estrita} Assuming the same hypotheses as in Proposition
\ref{teoremaA}, if $\Omega$ is not a ball, then
$$ \max u \; < \; \max U_B \, . $$
\end{corollary}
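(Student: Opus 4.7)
I argue by contradiction, supposing $\max u = \max U_B =: M$ while $\Omega$ is not a ball. The plan is to invoke Proposition~\ref{continuidade} with $t_1 = 0$ and $t_2 = M$; its structural hypotheses on $a({\rm z})=|{\rm z}|^{p-2}{\rm z}$ are met since $a({\rm z})\cdot {\rm z}=|{\rm z}|^p \in C^2(\mathbb{R}^n\setminus\{0\})$ and $(a({\rm z})\cdot {\rm z})^{1/p}=|{\rm z}|$ is convex. Applying the proposition will yield that $u$ equals a translate of $u^\sharp$ on $\{0<u<M\}=\Omega$, forcing $\Omega$ to be a translate of $B$---a contradiction. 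Two conditions remain to verify: (a) $u^\sharp$ agrees with a radial solution of $({\rm P}_B)$ on $\Omega^\sharp_{0,M}$, and (b) $\int_\Omega|\nabla u|^p\,dx=\int_B|\nabla u^\sharp|^p\,dx$.

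First I revisit the proof of Theorem~\ref{teoremaA} at the level $t_0=0$. The strong maximum principle gives $u>0$ in $\Omega$ and $U_B>0$ in $B$, so $|\Omega_0|=|\Omega|=|B|=|B_0|$, and the chain of inequalities from that proof reads
\[
\int_\Omega\Bigl[\tfrac{|\nabla u|^p}{p}-F(u)\Bigr]dx \;\ge\; \int_B\Bigl[\tfrac{|\nabla u^\sharp|^p}{p}-F(u^\sharp)\Bigr]dx \;\ge\; \int_B\Bigl[\tfrac{|\nabla U_B|^p}{p}-F(U_B)\Bigr]dx.
\]
Lemma~\ref{divergencia} at $t=0$, combined with equimeasurability, identifies the outer terms with $\int_B h_0(u^\sharp)\,dx$ and $\int_B h_0(U_B)\,dx$, respectively, where $h_0(s)=sf(s)/p-F(s)$ is strictly decreasing on $(0,M]$. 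The crucial step is to show $J_B(u^\sharp)=J_B(U_B)$: since the minimizer of $J_B$ over $W_0^{1,p}(B)$ is unique (by $f(t)/t^{p-1}$ decreasing; see \cite{BO}, \cite{BK}), this forces $u^\sharp=U_B$ a.e.\ on $B$, establishing (a). Condition (b) then follows at once from Lemma~\ref{divergencia} applied to both $u$ and $U_B$, equimeasurability, and $u^\sharp = U_B$:
\[
\int_\Omega|\nabla u|^p = \int_\Omega uf(u) = \int_B u^\sharp f(u^\sharp) = \int_B U_B f(U_B) = \int_B|\nabla U_B|^p = \int_B|\nabla u^\sharp|^p,
\]
and Proposition~\ref{continuidade} closes the argument.

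The main obstacle is to justify $J_B(u^\sharp)=J_B(U_B)$ from the sole hypothesis $\max u=\max U_B$; this is a rigidity statement. I plan to address it by combining two ingredients: the strict P\'olya-Szeg\"o inequality of Brothers and Ziemer (whose nondegeneracy hypothesis on $u^\sharp$ is ensured by the $C^{1,\alpha}$ regularity of $u$ and the strong maximum principle for $-\Delta_p$, which prevent $u^\sharp$ from having plateaus at levels in $(0,M)$), and a layer-cake rewriting of the theorem's key inequality into the weighted form
\[
\int_0^M [\mu_B(\tau)-\mu_u(\tau)]\,|h_0'(\tau)|\,d\tau \;\ge\; 0.
\]
If $\Omega$ were not a ball, strict P\'olya-Szeg\"o would render the first inequality in the energy chain strict; tracing this strictness through a rigidity argument parallel to the one concluding the proof of Theorem~\ref{teoremaA} (applied at $t_0=0$ under $\max u=\max U_B$) should contradict the weighted inequality above, forcing $\mu_u\equiv\mu_B$ and hence $u^\sharp=U_B$.
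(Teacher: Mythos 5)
Your plan hinges on establishing that $u^{\sharp}=U_B$ whenever $\max u=\max U_B$, and then invoking Proposition~\ref{continuidade} at $(t_1,t_2)=(0,M)$ to conclude that $\Omega$ is a ball. But the final paragraph, where you promise to prove $J_B(u^{\sharp})=J_B(U_B)$, does not close, and cannot close by the route you sketch. The chain of inequalities from Theorem~\ref{teoremaA} at $t=0$, sharpened by a strict P\'olya--Szeg\"o inequality when $\Omega$ is not a ball, gives $J_{\Omega}(u)>J_B(u^{\sharp})\ge J_B(U_B)$; nothing here forces the second inequality to be an equality. Moreover, combining Lemma~\ref{divergencia} at $t=0$ with the layer-cake representation shows that strict P\'olya--Szeg\"o translates exactly into
$\int_0^M[\mu_B(\tau)-\mu_u(\tau)]\,|h_0'(\tau)|\,d\tau>0$,
which is \emph{consistent} with, rather than contradicted by, the nonnegativity statement you isolate, so no contradiction arises and $\mu_u\equiv\mu_B$ does not follow. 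Indeed, the conclusion you are reaching for is false in general: when $\Omega$ is not a ball the expected picture (cf.\ Theorem~\ref{teoremaP}) is $\mu_u(t)<\mu_B(t)$ on a nontrivial range, i.e.\ $u^{\sharp}\ne U_B$, so trying to force $u^{\sharp}=U_B$ is a dead end. The rigidity argument at the end of Theorem~\ref{teoremaA}'s proof is also unavailable here: it needs a crossing level $t_0$ where $\mu_u(t_0)=\mu_B(t_0)$ and $\mu_u>\mu_B$ just above, which only exists when $\max u>\max U_B$, exactly the case you have excluded by contradiction.

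The paper's proof takes a genuinely different path. It uses Proposition~\ref{continuidade} not at the top level but only to conclude that the inequalities at $t=0$ are strict, hence that there is some $t>0$ with $|\Omega_t|<|B_t|$. It then restarts the comparison on $\Omega_t$ with the shifted nonlinearity $\tilde f(s)=f(s+t)$, picks a concentric ball $B'$ with $|B'|=|\Omega_t|\subsetneq B_t$, uses Remark~\ref{obs1} and the maximum principle to obtain a minimizer $w$ on $B'$ with $w\equiv t$ on $\partial B'$ satisfying $w<U_B$, and applies Theorem~\ref{teoremaA} to $v=u-t$ and $V_{B'}=w-t$ to get $\max_{\Omega}u=\max v+t\le\max w<\max U_B$. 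The strict inequality thus comes from a domain-monotonicity step ($B'\subsetneq B_t$ together with the maximum principle), not from a rigidity statement about $u^{\sharp}$. To repair your proposal along the paper's lines you would abandon the goal $u^{\sharp}=U_B$, extract only the mass drop $|\Omega_t|<|B_t|$ at some positive level, and then compare with the smaller ball.
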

\begin{proof} If $\Omega$ is not a ball, Proposition
\ref{continuidade} implies that inequalities (\ref{S}) and
(\ref{desigint}) are strict for $t=0$. Therefore, there is $t
> 0 $ such that
$$ |\Omega_t| \; <  \; |B_t|. $$
Note that the function $ v \; = \; u - t $ satisfies
\begin{equation}
\left\{
\begin{array}{rclc}
- \Delta_p v & = & \tilde{f}(v) & {\rm in } \; \; \Omega_t \\
  v & = & 0 & {\rm on } \; \; \partial \Omega_t ,\\ \end{array}
  \right.
\label{solucaot}
\end{equation} where $\tilde{f}$ is given by $\tilde{f}(s) = f(s+t)$.
If $B'$ and $B$ are  concentric balls and $| B'| = |
\Omega_t  |$, then $|B'| < | B_t |$ and $B' \subset
B_t $. Since $U_B = t$ on $\partial B_t$, we get from the maximum
principle that $U_B
> t$ on $\partial B'$. Hence, using Remark \ref{obs1},
there is a function $w: B' \to \mathbb{R}$ that minimizes
$J_{B'}$ under the condition $w \equiv t$ on $\partial
B'$ and, therefore, the function $V_{B'}=w -t$ is
the solution of (\ref{solucaot}) with $\Omega_t$ replaced by
$B'$. Furthermore, $w < U_B$. Since $\tilde{f}$ satisfies
all hypotheses required in Theorem \ref{teoremaA},
$$\max v \; \le \; \max V_{B'} \, .$$
Hence,
$$\max_{\Omega} u \; = \; \max_{\Omega_t} (v + t) \; \le \; \max_{\Omega_t} (V_{B'} + t) \; = \; \max_{\Omega_t}w < \max_{\Omega}U_B $$
proving the result.   \end{proof}

\begin{remark}
\label{remarktheorema1}
Suppose that $u \in W^{1,p}_0(\Omega)$ is a solution of
\begin{equation}-{\rm div} (M Du |Du|^{p-2}) = f(u),
\label{equacaoM}
\end{equation}
where $M(x)=(a_{ij}(x))$ is a matrix with measurable bounded entries such that,
$\sum_{ij}a_{ij}(x) \xi_i\xi_j \ge |\xi|^2$. Observing that
$$  \tilde{J} (v) := \int_{\Omega}\langle MDv,Dv \rangle \frac{1}{p} | \nabla v
|^{p-2}  - F(v) \; dx \ge \int_{\Omega} \frac{1}{p} | \nabla v
|^{p}  - F(v) \; dx, $$
and repeating the arguments of Theorem \ref{teoremaA}, we get $\max u \le \max U_B$.
Notice that $M$ can be nonsymmetric.
\end{remark}

Next result is some sort of maximum principle for the distribution function.
The proof will be given for a more general case in Section \ref{MR}, Proposition \ref{tipodeprincipiodomaximogeral}.

\begin{proposition}
Suppose that $u \in W^{1,p}_0(\Omega)$ and $U \in W^{1,p}_0 (B)$ satisfy $-\Delta_p u = f(u)$ and $-\Delta_p U = f(U)$, where $f$ is a nondecreasing locally Lipschitz
function, positive on $(0,+\infty)$. If $u^{\sharp} \le U$ and $u^{\sharp} \not\equiv U$, then $u^{\sharp} < U$ on $B$.
\label{tipodeprincipiodomaximo}
\end{proposition}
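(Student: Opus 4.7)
The strategy is to compare $u^\sharp$ and $U$ as sub/supersolutions of the same nonlinear equation and then invoke a strong maximum principle applied to the nonnegative difference $w = U - u^\sharp$. We have $w \in W^{1,p}_0(B)$, $w \ge 0$, $w \not\equiv 0$, and $w = 0$ on $\partial B$; the goal is to upgrade $w \ge 0$ to $w > 0$ in the interior.

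The crucial ingredient is to show that $u^\sharp$ is a weak subsolution of $-\Delta_p v = f(v)$ in $B$, that is,
\[
\int_B |\nabla u^\sharp|^{p-2}\nabla u^\sharp \cdot \nabla\varphi\,dx \le \int_B f(u^\sharp)\,\varphi\,dx
\]
for every nonnegative $\varphi \in W^{1,p}_0(B)$. This would be established by a Talenti-style level-set argument: Lemma~\ref{divergencelemma} yields, for a.e.\ $t\in[0,\max u)$,
\[
\int_{\{u=t\}} |\nabla u|^{p-1}\,dH^{n-1} = \int_{\Omega_t} f(u)\,dx,
\]
and combining this identity with the coarea formula, Hölder's inequality, and the classical isoperimetric inequality gives a differential inequality for $\mu_u(t)$; for the radial $U$ all these auxiliary inequalities become equalities. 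Translating the resulting ODE comparison back through $u^\sharp(x) = u^*(\omega_n |x|^n)$ produces the desired distributional subsolution inequality.

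Since $f$ is nondecreasing and $u^\sharp \le U$, we have $f(U) \ge f(u^\sharp)$, so $U$ is a weak supersolution of $-\Delta_p v = f(u^\sharp)$. Subtracting the two weak formulations shows that for every nonnegative $\varphi \in W^{1,p}_0(B)$,
\[
\int_B \bigl(|\nabla U|^{p-2}\nabla U - |\nabla u^\sharp|^{p-2}\nabla u^\sharp\bigr) \cdot \nabla\varphi\,dx \ge \int_B \bigl(f(U) - f(u^\sharp)\bigr)\varphi\,dx \ge 0.
\]
Combined with the strict monotonicity of $\xi \mapsto |\xi|^{p-2}\xi$ and the facts $w \ge 0$, $w \not\equiv 0$, this places us in the setting of the strong maximum/comparison principle for the $p$-Laplacian (Vázquez, Damascelli--Sciunzi): either $w \equiv 0$ in $B$ or $w > 0$ in $B$. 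The former is excluded by hypothesis, giving $u^\sharp < U$ in $B$.

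The main obstacle is the rigorous proof of the subsolution inequality for $u^\sharp$ on all of $B$, particularly on level sets where $u^\sharp$ is constant on sets of positive measure (so $|\nabla u^\sharp| = 0$) and where the isoperimetric inequality degenerates; this is the same kind of technicality handled by the Brothers--Ziemer analysis invoked in Proposition~\ref{continuidade}. A secondary difficulty is invoking a strong comparison principle for the quasilinear operator on the difference $w$, since $|\nabla U|^{p-2}\nabla U - |\nabla u^\sharp|^{p-2}\nabla u^\sharp$ is not the $p$-Laplacian of $w$; one must bring it to the standard form $-\Delta_p w + c(x)\Phi(w) \ge 0$, using the Lipschitz property of $f$ and the positivity of $U$, before Vázquez's theorem applies.
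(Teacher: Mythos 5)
Your proposal takes a genuinely different route from the paper. You propose to (i) show, via a Talenti-type level-set argument, that $u^{\sharp}$ is a weak subsolution of $-\Delta_p v = f(v)$ in $B$, and then (ii) invoke a strong maximum/comparison principle for the $p$-Laplacian applied to $w = U - u^{\sharp}$. The paper instead stays at the level of distribution functions: combining Lemma~\ref{divergencelemma}, the hypothesis $U \ge u^{\sharp}$ with $f$ nondecreasing, and the P\'olya--Szeg\"o principle, it obtains a level-set inequality
\[
\int_{\{U=t\}} \frac{a(U,\nabla U)\cdot \nabla U}{|\nabla U|}\,dH^{n-1} \ge \int_{\{u^{\sharp}=t\}} \frac{a(u^{\sharp},\nabla u^{\sharp})\cdot \nabla u^{\sharp}}{|\nabla u^{\sharp}|}\,dH^{n-1},
\]
and then applies Lemma~\ref{comparacaoradial}, which turns this into a scalar ODE inequality for $r_i(t)=(\mu_i(t)/\omega_n)^{1/n}$ and concludes by a Gronwall-type argument. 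This is carried out in Proposition~\ref{tipodeprincipiodomaximogeral} and then specialized.

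The gap in your proposal is step (ii), and it is not a secondary technicality. A strong comparison principle for the $p$-Laplacian with $p \ne 2$ is \emph{not} generically available: the map $\xi \mapsto |\xi|^{p-2}\xi$ degenerates at $\xi=0$, the difference $w = U - u^{\sharp}$ does \emph{not} satisfy any equation of the Vázquez form $-\Delta_p w + c(x)\Phi(w) \ge 0$ (the principal part is $|\nabla U|^{p-2}\nabla U - |\nabla u^{\sharp}|^{p-2}\nabla u^{\sharp}$, which is not $|\nabla w|^{p-2}\nabla w$, and the Lipschitz continuity of $f$ only controls the zero-order term), and the known strong comparison principles (Cuesta--Takáč, Damascelli--Sciunzi) require $C^{1,\alpha}$ regularity of both functions and nonvanishing of $\nabla U$ (or $\nabla u^{\sharp}$) on the contact set — none of which you verify. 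In particular, $u^{\sharp}$ is only known to lie in $W^{1,p}_0$ a priori, and it can have $|\nabla u^{\sharp}|=0$ on sets of positive measure even when $u$ is smooth. The paper's Lemma~\ref{comparacaoradial} is precisely the replacement for this missing principle: it exploits the radial structure of both $u^{\sharp}$ and $U$ to reduce the strict comparison to a one-dimensional Gronwall inequality, where only the $C^1$ regularity of $U$ in the radial variable and the monotonicity of $s\mapsto |a(t,s{\rm w})|$ are needed, and the degeneracy is handled by integrating $d(t,y)=[b(t,-1/y)]^{1/(n-1)}$ rather than differentiating the quasilinear operator. To repair your argument you would either have to reprove such a strong comparison principle in this specific radial setting (essentially reconstructing Lemma~\ref{comparacaoradial}) or restrict to situations where the Damascelli--Sciunzi hypotheses can be checked, which would weaken the conclusion.
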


Next theorem, in the case $p=2$ and $f(0) > 0$, is a consequence of a result, which establishes that the symmetrization of the minimal solution associated to $\Omega$ is smaller or equal than the one associated
to the corresponding ball (see \cite{B1}, \cite{L1}), and the uniqueness of solution when $f(t)/t$ is decreasing (see \cite{BO}). For general $p$, we can apply a similar argument to compare the minimal
solutions (see \cite{I}) and the uniqueness result obtained for the case that $f(t)/t^{p-1}$ is decreasing (see \cite{BK}).

Also it can be proved in a independent way using the main result of Section \ref{MR} and the uniqueness of solution to this problem.

\begin{theorem}
\label{teoremaP} Let $\Omega \subset \mathbb{R}^n$ be a bounded
domain, $B$ be a ball such that $|B|=|\Omega|$, and $u$ be a
weak solution of {\rm (\ref{eq1})}, where $div(a(\nabla
u))=\Delta_p u$ and $f$ is a nonnegative increasing locally Lipschitz
function, such that $f(t)/t^{p-1}$ is decreasing on $(0,+\infty)$.
Then,
$$| \{ u > t \} | \; < \; |\{U_B > t \}| \quad \forall t \in
(0,\max U_B],$$
unless $\Omega$ is a ball.
\end{theorem}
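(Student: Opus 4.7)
My plan is to apply the main comparison theorem of Section \ref{MR} (Theorem \ref{finaltheorem}) to the $p$-Laplacian setting, together with the uniqueness of the solution of $({\rm \tilde P}_B)$ afforded by the monotonicity of $f(t)/t^{p-1}$ (cf.\ \cite{BK}). Setting $a({\rm z})=\tilde a({\rm z})=|{\rm z}|^{p-2}{\rm z}$, one has $a({\rm z})\cdot {\rm z}=|{\rm z}|^p$ and $(a({\rm z})\cdot {\rm z})^{1/p}=|{\rm z}|$, which is trivially convex. Hypotheses (H1)--(H4) are immediate with $q=q_0=p$, and (H5) follows from the decreasing character of $f(t)/t^{p-1}$ combined with $f$ being nonnegative and locally Lipschitz: this yields a growth bound of the form $f(t)\le \alpha t^{p-1}+\beta$, and by replacing $\Omega$ by small enough $B$ (or, as in this theorem, by fixing the admissible $\alpha,\beta$) the constraint $\alpha<C_*\lambda_B$ is achieved.

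With hypotheses verified, Theorem \ref{finaltheorem} delivers $u^{\sharp}\le U_B$ on $B$, equivalently $\mu_u(t)\le \mu_B(t)$ for every $t\in [0,\max U_B]$. Since $a=a({\rm z})$ and $(a({\rm z})\cdot {\rm z})^{1/p}$ is convex, the strict-inequality clause of Theorem \ref{finaltheorem} applies whenever $\Omega$ is not a ball, which is exactly the desired conclusion. An equivalent route, more in the spirit of the present section, is to start from the non-strict inequality $u^{\sharp}\le U_B$; by Corollary \ref{estrita} we have $\max u<\max U_B$ when $\Omega$ is not a ball, so $u^{\sharp}(0)<U_B(0)$ and in particular $u^{\sharp}\not\equiv U_B$; Proposition \ref{tipodeprincipiodomaximogeral} (the general form of Proposition \ref{tipodeprincipiodomaximo}) then upgrades this to the pointwise strict inequality $u^{\sharp}<U_B$ on all of $B$. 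Uniqueness of $U_B$ is what permits identifying the comparison function in (\ref{definicaoU}) with the actual solution.

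Finally, I would translate the pointwise strict inequality into strict inequality of distribution functions. Since $U_B$ is a positive radial $C^{1}$-solution with $f(U_B)>0$, Lemma \ref{divergencelemma} yields $|a(\nabla U_B)|>0$ off the origin; as in the proof of Proposition \ref{continuidade} this forces $|\nabla U_B|>0$, so $U_B$ is strictly decreasing in $r=|x|$ and $\{U_B>t\}=B(0,R(t))$ with $R(t)>0$ for every $t\in (0,\max U_B)$. Writing $u^{\sharp}(x)=\psi(|x|)$ with $\psi$ nonincreasing, the strict inequality $\psi(R(t))<U_B(R(t))=t$ combined with monotonicity of $\psi$ and continuity of $U_B$ gives a uniform gap on an annulus just inside $R(t)$, hence $\{u^{\sharp}>t\}\subseteq B(0,R(t)-\varepsilon)$ for some $\varepsilon=\varepsilon(t)>0$. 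Therefore $\mu_u(t)<\mu_B(t)$.

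The step I expect to be the main obstacle is precisely this last one: a priori $u^{\sharp}$ may be discontinuous, and a jump of $\psi$ at $r=R(t)$ could in principle reconcile pointwise strict inequality $u^{\sharp}<U_B$ with $\mu_u(t)=\mu_B(t)$. The cure is to exploit continuity and strict radial monotonicity of $U_B$ (so that $U_B(r)-t\to 0$ only as $r\uparrow R(t)$) together with the fact that, by Proposition \ref{tipodeprincipiodomaximogeral}, the gap $U_B-u^{\sharp}$ is strictly positive on the whole of $B$ and in particular bounded away from zero on any compact annulus $\{R(t)-\delta\le |x|\le R(t)\}$. This rules out any near-equality of the radii of the superlevel balls and closes the argument for all $t\in (0,\max U_B)$.
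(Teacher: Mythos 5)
Your overall strategy---deriving $u^\sharp\le U_B$ from Theorem \ref{finaltheorem}, identifying $U_B$ via the uniqueness afforded by the monotonicity of $f(t)/t^{p-1}$ (\cite{BK}), and invoking Proposition \ref{tipodeprincipiodomaximogeral}---is exactly the second route the paper indicates for this theorem. The difficulty lies where you say it does, in upgrading the pointwise strict inequality $u^\sharp<U_B$ to the strict inequality of distribution functions; but the cure you propose contains an unjustified step. You claim that, since $U_B-u^\sharp>0$ on $B$, the gap is bounded away from zero on the compact annulus $\{R(t)-\delta\le|x|\le R(t)\}$. This does \emph{not} follow from pointwise positivity alone: a priori $u^\sharp$ is only nonincreasing and right-continuous, so $U_B-u^\sharp$ need not be lower semicontinuous and its infimum on a compact set can vanish. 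Indeed, in the very scenario you wish to exclude---$\mu_u(t_0)=\mu_B(t_0)$ for some $t_0\in(0,\max U_B)$---the set $\{u^\sharp>t_0\}$ agrees (up to a null set) with $B_{R(t_0)}=\{U_B>t_0\}$, so $t_0<u^\sharp(r)<U_B(r)$ for $r<R(t_0)$, and both are squeezed to $t_0$ as $r\uparrow R(t_0)$; the gap tends to $0$ and no contradiction emerges from your estimate.

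The cleanest repair is to read the proof, not just the statement, of Proposition \ref{tipodeprincipiodomaximogeral}: with $r_1(t)=(\mu_u(t)/\omega_n)^{1/n}$ and $r_2(t)=(\mu_B(t)/\omega_n)^{1/n}$, that argument shows $\{t\ge0:r_1(t)=r_2(t)\}$ is an interval $[0,t_1]$, and that $t_1>0$ is impossible, either because $f$ is strictly increasing (giving strict inequality in \eqref{fintegralinequality}) or because $\Omega$ is not a ball and $a({\rm z})=|{\rm z}|^{p-2}{\rm z}$ falls under Proposition \ref{continuidade}. Since $u^\sharp\le U_B$ forces $\mu_u\le\mu_B$, the conclusion $t_1=0$ delivers $\mu_u(t)<\mu_B(t)$ for all $t\in(0,\max U_B)$ directly, with $u^\sharp\not\equiv U_B$ supplied by Corollary \ref{estrita}. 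Alternatively, if you insist on routing through the pointwise inequality, you must first argue that $u^\sharp$ is continuous: $u\in C^{1,\alpha}_{\rm loc}(\Omega)$ by regularity theory for the $p$-Laplacian, $u>0$ in $\Omega$, so every nonempty $\{t_1<u<t_2\}$ is open of positive measure; hence $\mu_u$ is strictly decreasing, $u^*$ has no flat parts, and $u^\sharp$ is continuous. Then $U_B-u^\sharp$ is continuous and positive, the compactness argument does close, and your proof is complete. Two smaller remarks: the deduction of (H5) from the monotonicity of $f(t)/t^{p-1}$ only yields $\alpha=f(1)$ with no a priori reason for $f(1)<C_*\lambda_B$ (what is actually used in Section \ref{SC} is the existence of the minimizer of $J_B$); and at $t=\max U_B$ the strict inequality cannot hold literally since $\mu_B(\max U_B)=0$, so the interval in the statement should be read as $(0,\max U_B)$.
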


\section{Study of the radial solutions}
\label{RC}

We study now a Dirichlet problem, where the domain is a ball, and we need some additional hypothesis:
\\ \\ (H6) there is some $\mu \in [0,2)$ such that $\frac{d}{ds} |a(t,s{\rm w})| \ge |a(t,s{\rm w})|^{\mu}$ for $s > 0$ small and ${\rm w}$ unit vector of $\mathbb{R}^n$.
\\ \\ The following theorem is the main result of this section.

\begin{theorem}
Let $B'=B_{R_0}$ be a open ball in $\mathbb{R}^n$ satisfying $|B'| \le |\Omega|$ and suppose that $\tilde{a}$ and $f$ satisfy
conditions (H1)-(H6).
If $f(0) > 0$ and $m \ge 0$, then there exists a solution $U_{B'}$ to the problem
$({\rm \tilde{P}}_{B'})$ with $U_{B'} = m$ on $\partial B'$ such that, for any radial solution $U$ of $({\rm \tilde{P}}_{B''})$ with $0 \le U \le m$ on $\partial B''$,
$$ U_{B'} > U \quad in \quad B'', $$
where $B'' \subsetneq B'$ are concentric open balls. The same holds
in the case $B'' = B'$ if $U$ and $U_{B'}$ are different.
\label{TeorSolucRadial}
\end{theorem}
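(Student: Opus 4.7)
The plan is to reduce the problem to a scalar ODE and construct $U_{B'}$ as a maximal radial solution. Writing $U_{B'}(x)=w_1(|x|)$ and using $\tilde{a}(t,{\rm z})=\tilde{e}(t,|{\rm z}|){\rm z}$, the equation becomes
\[
\bigl(r^{n-1}\, b(w_1,-w_1')\bigr)' \;=\; r^{n-1} f(w_1), \qquad w_1(R_0)=m,
\]
where $b(t,s):=|\tilde{a}(t,s{\rm w})|$ (for any unit vector ${\rm w}$) is continuous, strictly increasing in $s$, with $b(t,0)=0$. I would realize $U_{B'}$ as the pointwise supremum of all radial solutions of $({\rm \tilde{P}}_{B'})$ with boundary value $m$. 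Existence of at least one solution comes from hypotheses (H1)--(H5) and the a priori $L^\infty$ bound of Remark~\ref{classicalsol}; alternatively, shoot the ODE from $r=0$ with $w(0)=M$, $w'(0)=0$, using (H6) to control the degenerate behaviour of $b(t,\cdot)$ near $s=0$ and get continuous dependence on the shooting parameter $M$. The a priori bound then bounds the shooting family from above, so the supremum $M^{\ast}$ is attained and the corresponding profile furnishes $U_{B'}$.

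Integrating the ODE from $0$ to $r$ yields
\[
r^{n-1}\, b\bigl(w_1(r),-w_1'(r)\bigr) \;=\; \int_0^r s^{n-1} f(w_1(s))\, ds,
\]
and the right-hand side is strictly positive for $r>0$ because $f(0)>0$ and $f$ is nondecreasing. Since $b(t,\cdot)$ is strictly increasing and $b(t,0)=0$, this forces $w_1'(r)<0$ throughout $(0,R_0]$; inverting $s\mapsto b(t,s)$ (continuous by (H3)) yields $w_1\in C^1(\overline{B'})$ strictly decreasing, which is exactly the regularity required to invoke Lemma~\ref{comparacaoradial}.

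Now let $U(x)=w_2(|x|)$ be a radial solution of $({\rm \tilde{P}}_{B''})$ with $0\le U\le m$ on $\partial B''$, where $B''=B_R\subseteq B'$. If $\max w_2\le m$, then the strict decrease of $w_1$ from $w_1(0)>m$ to $w_1(R_0)=m$ gives $w_1(r)>m\ge w_2(r)$ on $B''$ whenever $B''\subsetneq B'$, and we are done. Otherwise, continuity produces a smallest $R_2\in(0,R]$ with $w_2(R_2)=m$, and in both remaining situations ($B''\subsetneq B'$, or $B''=B'$ with $U\ne U_{B'}$) one verifies $R_2<R_0$. Apply Lemma~\ref{comparacaoradial} to the pair $(w_1,\, w_2|_{[0,R_2]})$ with $R_1=R_0>R_2$. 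By Lemma~\ref{divergencelemma} applied to each profile (using that each takes the value $m$ on its outer boundary), the flux integrals at level $t$ equal $\int_{\{w_i>t\}} f(w_i)\, dx$ for a.e.\ $t\ge m$, so the hypothesis of the lemma reduces to
\[
\int_0^{r_1(t)} s^{n-1} f(w_1)\, ds \;\ge\; \int_0^{r_2(t)} s^{n-1} f(w_2)\, ds, \qquad t\ge m,
\]
where $r_i(t)$ is the radius of $\{w_i>t\}$. The conclusion of the lemma, $w_1>w_2$ on $B_{R_2}\setminus\{0\}$, extends to $B''\setminus\{0\}$ since $w_1>m\ge w_2$ on $B''\setminus B_{R_2}$, proving the theorem.

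The main obstacle is verifying the displayed flux inequality, which is precisely the point at which the maximality of $U_{B'}$ must be exploited. My plan is to argue by contradiction: if $r_2(t_0)>r_1(t_0)$ for some $t_0\ge m$, then pasting the higher radial profile $w_2|_{[0,r_2(t_0)]}$ in the inner ball and keeping $U_{B'}$ outside produces a radial function that is a subsolution of $({\rm \tilde{P}}_{B'})$ strictly above $U_{B'}$ on a set of positive measure; a standard sub/super-solution iteration (with the a priori supersolution provided by Remark~\ref{classicalsol}) then yields a radial solution of $({\rm \tilde{P}}_{B'})$ greater than $U_{B'}$, contradicting maximality. Hypothesis (H6) plays a double role: it legitimizes the shooting ODE at the degeneracy $w'=0$, and it underlies the strong-maximum-principle-type strictness needed both in the pasting argument and in the strict conclusion of Lemma~\ref{comparacaoradial} that yields $U_{B'}>U$ on $B''$ in the cases $B''\subsetneq B'$ and $B''=B'$ with $U\ne U_{B'}$.
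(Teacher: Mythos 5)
Your overall strategy — construct $U_{B'}$ by shooting, then compare with $U$ by radial arguments — is in the spirit of the paper, but the crucial step is left unsupported. The paper's Possibility~1 does not use Lemma~\ref{comparacaoradial} at all: it introduces the one-parameter shooting family $w_h=\Psi_2(h)$, defines $C=\{h: w_h\ge U_{B'} \text{ in } B', w_h \ge U \text{ in } B''\}$, shows $C\ne\emptyset$ via the a priori $L^\infty$ bound, and then argues that $\alpha_1=\inf C$ must equal $U_{B'}(0)$ by the continuity of $\Psi$ (Lemma~\ref{ContinuidadePsi}) together with ODE uniqueness at an interior touching point. Strictness is again obtained directly from ODE uniqueness. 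Your route instead tries to verify the flux hypothesis of Lemma~\ref{comparacaoradial} and to obtain strictness from that lemma.

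The genuine gap is in your contradiction argument for the flux inequality. You invoke ``a standard sub/super-solution iteration (with the a priori supersolution provided by Remark~\ref{classicalsol})'' to produce a solution above $\max\{U_{B'},U\}$, but (i) Remark~\ref{classicalsol} supplies only an $L^\infty$ bound, not a supersolution — a constant fails since $f>0$, so you would have to produce a genuine radial supersolution (e.g.\ $w_h$ for $h$ large, which is exactly what the paper's shooting family provides); and (ii) for operators of the form $-\mathop{\rm div}\tilde a(u,\nabla u)$ depending on $u$, the monotone sub/super-solution iteration is not ``standard'': it requires a comparison principle that (H1)--(H6) do not obviously deliver, and you neither cite nor prove one. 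The paper avoids this entirely by working inside the shooting family, where ``finding a solution between $\max\{U_{B'},U\}$ and the supersolution'' reduces to an intermediate-value argument for $\Psi$ plus ODE uniqueness. Your plan would work if you replaced the vague iteration with this shooting-based argument, but as written the step from the pasted subsolution to an actual solution of $(\tilde{\rm P}_{B'})$ is missing.

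A smaller imprecision: the paste you describe, ``$w_2|_{[0,r_2(t_0)]}$ inside and $U_{B'}$ outside'', is discontinuous at $r=r_2(t_0)$ since $w_1(r_2(t_0))<t_0=w_2(r_2(t_0))$; the correct construction is $\max\{w_1,w_2\}$ on $B''$ extended by $w_1$ outside, which is continuous because $w_2\le m<w_1$ near $\partial B''$.
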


\begin{remark} Suppose that the hypotheses of this theorem holds and $U$ is a radial weak solution of (${\rm
\tilde{P}}_{B''}$). We will see that $U$ is a classical solution in $B'' \backslash \{ 0 \}$. First using the ACL characterization of Sobolev functions (see e.g. \cite{Z}) and a local diffeomorphism between the Cartesian and the polar system
of coordinates, it follows that $U$ is absolutely continuous on closed radial segments that does not contain the origin. Hence the set $\{ U < t \}$ is open in $B'$ for any $t \in \mathbb{R}$.
Indeed, these sets are rings of the form $\{ x \in B' : r_t < |x| < R_0 \}$, otherwise there is a ring ${\mathcal R} = \{ r_1 < |x| < r_2 \}$ contained in $\{U < t \}$, such that $U = t$ on $\partial {\mathcal R}$, for which
the test function $\varphi(x)= (t-U(x)) \chi_{{\mathcal R}}(x) \in W^{1,p}_0$ satisfies
$$ 0 \ge -\int_{{\mathcal R}}\nabla U \cdot  \tilde{a}(U,\nabla U)   dx = \int_{{\mathcal R}}\nabla \varphi \cdot  \tilde{a}(U,\nabla U)   dx = \int_{{\mathcal R}} f(U) \varphi dx > 0, $$
that is a contradiction. Hence, $U$ is a nonincreasing radially symmetric function. Observe also that if $U$ is constant in some ring, then taking a nonnegative function with a compact support in this ring, we get
 a contradiction as before. Then $U$ is strictly decreasing in the radial direction. This conclusion can be obtained more easily for operators where the maximum principle holds.

Notice now that for a given ring ${\mathcal R}=\{r_1 < |x| < r_2 \}$, taking the radial test function $\varphi_{R,h}(|x|)=\chi_{[0,R-h]}(|x|) + \left( \frac{R+h}{2h} - \frac{|x|}{2h} \right) \chi_{(R-h,R+h]}(|x|)$, for $h>0$ and $R \in (r_1,r_2)$, we get
$$  n \omega_n \int_{R-h}^{R+h} \frac{b(U, -\partial_r U)}{2h} r^{n-1} dr =  \int_{B'} \nabla \varphi_{R,h} \cdot \tilde{a}(U, \nabla U) dx = \int_{B'} f(U) \varphi_{R,h}  dx,   $$
where $b(t,|{\rm z}|)=|\tilde{a}(t,{\rm z})|$ and  $\omega_n$ is the volume of the unit ball. Making $h \to 0$, from the Lebesgue Differentiation Theorem, it follows that
\begin{equation}
n \omega_n b(U(R), -\partial_r U(R) ) R^{n-1} = \int_{B_R} f(U) dx \ge \int_{B_{r_1}} f(0) dx  > 0
\label{bradialrelation}
\end{equation}
for almost every $ R \in (r_1,r_2)$ and then, using $(H3)$, we get that $|\nabla U| \ge c$ a.e. in ${\mathcal R}$, where $c$ is some positive constant that depends on ${\mathcal R}$.  Thus $U$ is a solution of a
uniformly elliptic equation in this ring and, therefore, a $C^{2, \alpha}$ function in ${\mathcal R}$ for any $\alpha \in (0,1)$.  Moreover, from \eqref{sup3}, $U$ is bounded and, from its monotonicity in the radial direction,
it can be defined continuously on $0$.  In fact, using \eqref{bradialrelation}, we can prove that $U$ is differentiable at the origin and its derivative is zero.

\noindent Due to this regularity of $U$ and $(H3)$, we have $\tilde{a}(U,\nabla U) =
\tilde{e}(U,|\nabla U |) \nabla U$ for some function $\tilde{e}:\mathbb{R} \times [0,+\infty) \to
\mathbb{R}$ and $U(x) = w(|x|)$ for some function $w:[0,R_1]
\to \mathbb{R}$ that satisfies, in the classical sense,
\begin{equation}
\label{radialequation} \left\{ \begin{array}{rcl} \displaystyle (\tilde{e}
\, w')' + \frac{n-1}{r}\; \tilde{e} \,w' & = & -f(w )  \quad {\rm for}
\quad r\in [0,R_1]
\\ w'(0) & = & 0 \\ w(R_1) & = & 0, \end{array} \right.
\end{equation}
where $R_1$ is the radius of $B''$ and $'$ denotes $ d /d
r$. To prove the existence of solution to this problem, we consider the following one:
\begin{equation}
\label{radialequation2} \left\{ \begin{array}{rcl} \displaystyle
(\tilde{e}(w(r), |w'(r)|)\, w'(r))' + \frac{n-1}{r}\tilde{e}\,w'(r) & = & -f(w(r))\\ w'(0)
& = & 0 \\ w(0) & = & h , \end{array} \right.
\end{equation}
where $h > 0$ is given. If $\tilde{e}$ depends only on z, according to Proposition A1 of
\cite{FLS}, there exists $\delta > 0$ and a positive local solution $w_{h}:[0,\delta)
\to \mathbb{R}$ to \eqref{radialequation2}. In the general case, consider first the problem \eqref{radialequation2} with $\tilde{e}$ replaced by $e_0(|{\rm z}|)=\tilde{e}(h,|{\rm z}|)$, that has a local solution $w_0$ defined on $[0,\delta_0)$ as in the previous case. Then, for $k \in \mathbb{N}$, take $\delta_k \le \delta_0$ such that $w_0(r) \ge h-h/k$ on $[0,\delta_k]$ and define $e_k$ such that $\tilde{a}_k(t,{\rm z}) := e_k(t,|{\rm z}|) {\rm z} $ satisfies (H3),(H4),(H6)
and
$$e_k(t,|{\rm z}|)= \left\{ \begin{array}{ll} e_0(|{\rm z}|) = \tilde{e}(h,|{\rm z}|) & \quad {\rm for} \; t \in [h-h/k,+\infty) \\[5pt]
                                             \tilde{e}(t,|{\rm z}|) & \quad {\rm for} \; t \in (-\infty, h-2h/k] .\end{array} \right. $$
Hence $w_0$ is a solution to \eqref{radialequation2} on $[0,\delta_k]$ with $\tilde{e}$ replaced by $e_k$ and, from \eqref{bradialrelation}, $w_0$ is decreasing and $\frac{d w_0}{dr}(\delta_k) \ne 0$. Since (H3) implies that $s\to |\tilde{a}_k(t,s{\rm w})|$ is increasing for any ${\rm w}$, the classical ODE theory implies that we can extend $w_0$ for a larger interval.
 Indeed, while some extension is positive, it can be continued to
a bigger interval. Since $f(0) > 0$ and $\tilde{a}_k$ satisfies (H4), integrating
$(r^{n-1}e_k(w(r), |w'(r)|)w'(r))'=-r^{n-1}f(w(r))$, we conclude that for any positive
continuation $\bar{w}_{k}:[0,\bar{\delta}) \to \mathbb{R}$ of
$w_{0}$, the right end point satisfies
\begin{equation} \bar{\delta} \le C
:= \frac{nC^*}{f(0)} \left[ 1+ \left( \frac{p}{p-1}\cdot \frac{hf(0)}{nC^*} \right)^{\frac{p-1}{p}} \right].
\label{Rbound0}
\end{equation}

\noindent Hence, there exists a continuation
$w_k:[0,R_{k}] \to \mathbb{R}$ such that
$w_k(R_{k})=0$ and is positive on $[0,R_{k})$. Observe now that, using the same idea as in the estimate \eqref{bradialrelation},
we get that $|w_k'|$ is uniformly bounded by above. Hence some subsequence converge uniformly for some nondecreasing function $w_h:[0,R_h]\to \mathbb{R}$ that is positive in $[0,R_h)$ and vanishes at $R_h$.
Indeed, applying again a similar computation as in \eqref{bradialrelation} and using the positivity of $|\partial_{s}\tilde{a}_k(t,s{\rm z})|$ for $s,t\ne 0$ from $(H3)$,
it follows that $w_k'$ are are equicontinuous in compacts sets
of $[0,R_h)$ for $k$ large. (More precisely, the Lipschitz norm of $w_k'$ are uniformly bounded in compacts sets of $(0,R_h)$ and $w_k'(r)$ are uniformly close to $0$ for $r$ small.)
Hence, some subsequence converge uniformly for $w_h$ in the $C^1$ norm for compact sets of $[0,R_h)$. Hence, due to the regularity of $\tilde{a}$ and the definition of $\tilde{a}_k$, $U_h(x):=w_h(|x|)$
is the weak solution of $-{\rm div }\; \tilde{a}(v, \nabla v) = f(v)$ in $B_{R_h}$. Then, as we observed previously, $U_h$ is a classical solution, and satisfies $U_h(0)=h$ since $w_k(0)=h$.
Moreover, following the same argument of
Proposition A4 of \cite{FLS} for $\tilde{a}$ that depends also on $t$, for each $h>0$, such solution $U_h$ and
radius $R_h$ are unique. Let us represent this correspondence by
$\Psi=(\Psi_1,\Psi_2)$, where $\Psi_1(h) = R_h$ and $\Psi_2(h) =
U_{h}$. \label{AssociacaoAlphaR}
\end{remark}

Observe that $R_{h} \le C$, where $C$ is given by \eqref{Rbound0}.
Using this, the equicontinuity of the first derivative of solutions, Arzel\`a-Ascoli
Theorem and uniqueness for \eqref{radialequation2}, we get the following result.

\begin{lemma} The function $\Psi_1$ is continuous on
$(0,+\infty)$. Furthermore, for any $h_0 >0$, $\varepsilon
>0$ and $K$ compact subset of $B_{R_{h_0}}$, there exists
$\delta
>0$ such that
$ \|\Psi_2(h) - \Psi_2(h_0) \|_{C^1(K)} \le \varepsilon \quad if
\quad | h - h_0| < \delta .$ \label{ContinuidadePsi}
\end{lemma}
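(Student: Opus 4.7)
The plan is a subsequence plus Arzel\`a--Ascoli argument that combines the uniform upper bound $R_h\le C$ from \eqref{Rbound0} with the uniqueness for the initial value problem \eqref{radialequation2} recalled in the preceding Remark. Fix $h_0>0$ and take any sequence $h_k\to h_0$; I will show that $R_{h_k}\to R_{h_0}$ and that $w_{h_k}\to w_{h_0}$ in $C^1$ on compact subsets of $[0,R_{h_0})$, from which both conclusions follow. The first task is to produce uniform bounds. Writing the radial ODE in integrated form,
$$ r^{n-1}\, b(w_{h_k}(r),-w'_{h_k}(r)) \;=\; \int_0^r s^{n-1}\,f(w_{h_k}(s))\,ds, $$
where $b(t,s):=|\tilde a(t,s{\rm w})|$, and using $0\le w_{h_k}\le h_0+1$ together with $R_{h_k}\le C$, the right-hand side is bounded by $C^n f(h_0+1)/n$. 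Since $b(t,\cdot)$ is positive, continuous, increasing and unbounded by (H3)--(H4), this yields a uniform Lipschitz bound $|w'_{h_k}|\le M$ on $[0,R_{h_k}]$; integrating gives in turn the positive lower bound $R_{h_k}\ge h_k/M\ge h_0/(2M)$ for $k$ large.

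Extend each $w_{h_k}$ by $0$ on $[R_{h_k},C]$; the extended sequence is uniformly Lipschitz on $[0,C]$ and uniformly bounded. Passing to a subsequence, $R_{h_k}\to R^*\in[h_0/(2M),C]$ and $w_{h_k}\to w^*$ uniformly on $[0,C]$, with $w^*(0)=h_0$, $w^*(R^*)=0$ and $w^*\equiv 0$ on $[R^*,C]$. On compact subsets $K\subset(0,R^*)$ on which $w^*>0$, the quantity $b(w_{h_k},-w'_{h_k})$ stays bounded away from $0$ and $\infty$; combined with $\partial_s|\tilde a(t,s{\rm w})|>0$ from (H3) and the Lipschitz estimate of the preceding Remark, this gives equicontinuity of $w'_{h_k}$ on $K$, while the integrated form forces $|w'_{h_k}(r)|\to 0$ uniformly in $k$ as $r\to 0$. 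A diagonal extraction then produces $C^1$ convergence on compact subsets of $[0,R^*)\cap\{w^*>0\}$. Passing to the limit in the integrated equation, $w^*$ satisfies \eqref{radialequation2} with $w^*(0)=h_0$, so the uniqueness quoted in the preceding Remark gives $w^*\equiv w_{h_0}$ on the maximal interval where $w^*>0$, which forces $R^*\ge R_{h_0}$.

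To rule out $R^*>R_{h_0}$, suppose otherwise. Then $w^*\equiv 0$ on $(R_{h_0},R^*)$, so $w_{h_k}\to 0$ uniformly there, and the integrated equation together with $f(0)>0$ and (H2) yields, for $r\in(R_{h_0},R^*)$ and $k$ large,
$$ b(w_{h_k}(r),-w'_{h_k}(r)) \;\ge\; \frac{1}{n\omega_n r^{n-1}}\int_{B_{R_{h_0}/2}} f(w_{h_k})\,dx \;\ge\; c>0. $$
Hence $-w'_{h_k}\ge c'>0$ on $(R_{h_0},R^*)$, contradicting $\int_{R_{h_0}}^{R^*}(-w'_{h_k})\,dr=w_{h_k}(R_{h_0})-w_{h_k}(R^*)\to 0$. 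Therefore $R^*=R_{h_0}$; since every subsequence has a sub-subsequence with this same limit, the full sequence satisfies $R_{h_k}\to R_{h_0}$ (continuity of $\Psi_1$) and $w_{h_k}\to w_{h_0}$ in $C^1_{\rm loc}([0,R_{h_0}))$. Translating back from radial profiles gives the claimed $C^1(K)$ convergence of $U_{h_k}=\Psi_2(h_k)$ on every compact $K\subset B_{R_{h_0}}$.

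The main obstacle is the degeneracy of the ODE both at $r=0$ (from the $(n-1)/r$ term) and potentially where $w'=0$, which complicates the equicontinuity of $w'_{h_k}$ and the passage to the limit in the differential equation. Using the integrated form together with (H3)--(H4), as in the preceding Remark, bypasses the degeneracy; the hypothesis $f(0)>0$ is essential for ruling out $R^*>R_{h_0}$, since it guarantees the strict positivity of $-w'_{h_k}$ beyond $R_{h_0}$ where the limit $w^*$ is assumed to vanish.
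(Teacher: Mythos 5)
Your argument is correct and follows exactly the outline the paper indicates but does not spell out: the uniform bound $R_h\le C$ from \eqref{Rbound0}, the equicontinuity of $w'_h$ established in the preceding Remark, Arzel\`a--Ascoli, and the uniqueness for \eqref{radialequation2}, together with a subsequence argument to pass from subsequential limits to the full limit. The one small imprecision is in the final contradiction: the lower bound $-w'_{h_k}\ge c'>0$ is only available on $(R_{h_0},R_{h_k})$, since the zero extension has vanishing derivative on $[R_{h_k},R^*]$; as $R_{h_k}\to R^*>R_{h_0}$ this interval still has length bounded below for $k$ large, so $c'(R_{h_k}-R_{h_0})\le w_{h_k}(R_{h_0})\to w^*(R_{h_0})=0$ gives the same contradiction.
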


We can also improve estimate \eqref{Rbound0} in the following sense.

\begin{lemma}
\label{littleimprovment}
Given $M > 0$, there exists some continuous increasing function $\Theta_M:[0,M]\to \mathbb{R}$
s.t. $\Theta_M(0)=0$ and $R_h \le \Theta_M(h)$ for $h \le M$, where $R_h =\Psi_1(h)$, i.e., $R_h$ is the point s.t. the nonnegative solution $w$ of \eqref{radialequation2} vanishes.
 \end{lemma}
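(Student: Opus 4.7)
The plan is to first establish that $R_h \to 0$ as $h \to 0^+$, then to extend $\Psi_1$ continuously to $[0,M]$ via the preceding continuity lemma, and finally to take $\Theta_M$ equal to a running maximum of $\Psi_1$ plus $h$.

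For the limit, I would integrate the radial equation \eqref{radialequation2} in the divergence form $(r^{n-1}\tilde e(w_h, |w_h'|)w_h')' = -r^{n-1} f(w_h)$ from $0$ to $r$. Using $w_h'(0)=0$ together with the continuity of $\tilde a$ at ${\rm z}=0$ to kill the boundary contribution, and applying $f(w_h)\ge f(0)>0$ from (H2) and the assumption $f(0)>0$, I obtain
\begin{equation*}
b\!\left(w_h(r),|w_h'(r)|\right) = \frac{1}{r^{n-1}}\int_0^r s^{n-1} f(w_h(s))\,ds \ \ge\ \frac{f(0)\,r}{n},
\end{equation*}
where $b(t,s):=|\tilde a(t,s{\rm w})|=\tilde e(t,s)\,s$. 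Given $\eta>0$, set $r_0=\eta/2$, and use $\tilde a(t,0)=0$ together with the continuity of $\tilde a$ at the origin to pick $\delta\in(0,r_0)$ with $b(t,s)<f(0)r_0/(2n)$ whenever $|t|,s\le\delta$. If $h\le\delta$, then $w_h\le h\le\delta$ on all of $[0,R_h]$, and for $r\in[r_0,R_h]$ (assumed nonempty) the bound above combined with monotonicity of $b$ in $s$ from (H3) forces $|w_h'(r)|>\delta$. Integrating, $h\ge\int_{r_0}^{R_h}|w_h'|\,dr>\delta(R_h-r_0)$, hence $R_h<r_0+h/\delta\le\eta$ once $h<r_0\delta$, which proves $R_h\to 0$.

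With $R_h\to 0$ in hand, the preceding lemma on the continuity of $\Psi_1$ extends $\Psi_1$ continuously to $[0,M]$ with $\Psi_1(0)=0$. Setting $\Theta_M(h):=\max_{h'\in[0,h]}\Psi_1(h')+h$ yields a continuous function (the running maximum of a continuous function on a compact interval is continuous), strictly increasing (because of the $+h$ term), satisfying $\Theta_M(0)=0$ and $R_h=\Psi_1(h)\le\Theta_M(h)$ on $[0,M]$.

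The main obstacle is the first step: the already-established bound \eqref{Rbound0} tends to the fixed positive constant $nC^{*}/f(0)$ as $h\to 0$, so it cannot by itself yield $R_h\to 0$. A genuinely different argument, one that exploits simultaneously the positivity $f(0)>0$ (to produce the lower bound $b(w_h(r),|w_h'(r)|)\ge f(0)r/n$) and the degeneracy $\tilde a(t,0)=0$ (to convert smallness of $w_h$ into a lower bound on $|w_h'|$), is needed; the rest of the argument is essentially a compactness/envelope construction.
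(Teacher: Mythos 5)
Your argument is correct, and its crucial first step coincides with the paper's: integrate the divergence-form radial ODE from $0$ to $r$ to obtain $b(w_h(r),|w_h'(r)|)\ge f(0)r/n$, then exploit the degeneracy $\tilde a(t,0)=0$ together with the monotonicity of $s\mapsto b(t,s)$ to convert smallness of $w_h$ into a lower bound on $|w_h'|$. Your observation that \eqref{Rbound0} alone cannot work, because its $h\to 0$ limit is the positive constant $nC^*/f(0)$, is exactly the motivating point. Where the two proofs diverge is in what is extracted from that bound. The paper stays quantitative: it defines $\rho(s):=\sup_{t\in[0,M]}|\tilde a(t,s{\rm z})|$, which is continuous, strictly increasing and vanishes at $0$, deduces $-w'(R)\ge\rho^{-1}(f(0)R/n)$, integrates to obtain $h\ge\int_0^{R_h}\rho^{-1}\!\left(f(0)R/n\right)dR$, and then simply takes $\Theta_M$ to be the inverse of the increasing map $R_h\mapsto\int_0^{R_h}\rho^{-1}(f(0)R/n)\,dR$. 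This yields $\Theta_M$ in closed form, purely in terms of $\tilde a$, $f(0)$ and $M$, without ever invoking the solution map $\Psi_1$ — so Lemma \ref{littleimprovment} is logically independent of Lemma \ref{ContinuidadePsi}. You instead extract only the qualitative statement $R_h\to 0^+$ via a $\delta$--$\eta$ argument, then bootstrap through Lemma \ref{ContinuidadePsi} to extend $\Psi_1$ continuously to $[0,M]$ with $\Psi_1(0)=0$, and set $\Theta_M(h)=\max_{[0,h]}\Psi_1+h$. That construction is valid (the continuity lemma is already available, and the running maximum plus $h$ is continuous, strictly increasing, and dominates $R_h$), but it buys only an implicit envelope depending on $\Psi_1$, whereas the paper's version is explicit and a priori. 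Both routes rely on the same analytic insight; the paper's final bookkeeping is more direct and slightly more informative.
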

\begin{proof}
 Integrating $(r^{n-1}e(w(r),| w'(r)|)w'(r))'=-r^{n-1}f(w(r))$ from $0$ to $R \le R_h$, we get
$$\left|\tilde{a}\left(w(R),|w'(R)|{\rm z}\right) \right| = - e(w(R),|w'(R)|)w'(R) \ge \frac{f(0)R}{n} $$
for any $|{\rm z}| =1$. Since $s\mapsto |\tilde{a}(t,s{\rm z})|$ is continuous, strictly increasing in $[0,+\infty)$ and vanishes at $s=0$, where $t \in [0,M]$, the function $\rho(s):= \sup_{t\in [0,M]} |\tilde{a}(t,s{\rm z})|$ also
satisfies these hypotheses. Using that $w(R) \le h \le M$,
$$ \rho (-w'(R)) \ge \frac{f(0)R}{n}. $$
Taking the inverse of $\rho$ and integrating from $0$ to $R_h$,
$$ h= w(0)- w(R_h) = \int_0^{R_h }-w'(R) \; dR  \ge \int_0^{R_h} \rho^{-1} \left( \frac{f(0)R}{n} \right) \; dR .  $$
Observe that
$$ R_h \mapsto \int_0^{R_h} \rho^{-1} \left( \frac{f(0)R}{n} \right) \; dR $$
in invertible, since is increasing, positive and vanishes at $0$. Hence, we get the result defining $\Theta_M$ as the inverse of this application.
\end{proof}

\begin{lemma} Assuming the same hypotheses as in Theorem
\ref{TeorSolucRadial}, there exists a solution $U_{B'}$ to the
problem $({\rm P}_{B'})$ with $U_{B'}=0$ on $\partial B'$, such that
$$ \max U_{B'} \ge \max U ,$$
for any radial solution $U$ of $({\rm P}_{B''})$ satisfying $U=0$ on $\partial B''$, where $B''
\subset B'=B_{R_0}$ are concentric balls. As a matter of fact, $U_{B'}=
\Psi_2(h_0)$, where $h_0= \max \{h \; | \; \Psi_1(h) = R_0 \}$. Furthermore, the inequality is strict if $U \ne U_{B'}$.
\end{lemma}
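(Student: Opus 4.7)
The plan is to produce $U_{B'}$ via the shooting parametrization $\Psi=(\Psi_1,\Psi_2)$ of Remark \ref{AssociacaoAlphaR}, and then extract the maximum admissible initial height at $r=0$: concretely, to show that $S:=\{h>0 : \Psi_1(h)=R_0\}$ is nonempty and bounded so that $h_0:=\max S$ makes sense, set $U_{B'}:=\Psi_2(h_0)$, and then compare this with any other radial solution by a contradiction argument.

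For the nonemptiness of $S$, the key input is the a priori $L^\infty$ bound from Remark \ref{classicalsol}. Since $|B'|\le|\Omega|$, that remark (together with \eqref{sup3}) supplies a constant $C_0=C_0(n,q,\alpha,\beta,C_*,|\Omega|,|B'|)$ such that every weak solution $v\in W^{1,p}_0(B_R)$ of $({\rm \tilde P}_{B_R})$ on a ball $B_R$ with $|B_R|\le|B'|$ satisfies $\sup v\le C_0$. Applying this to $v=\Psi_2(h)$ on $B_{R_h}=B_{\Psi_1(h)}$ yields
\[
\Psi_1(h)\le R_0 \;\Longrightarrow\; h=\sup\Psi_2(h)\le C_0,
\]
so that $h>C_0$ forces $\Psi_1(h)>R_0$. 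Combining this with $\Psi_1(h)\to 0$ as $h\to 0^+$ (Lemma \ref{littleimprovment}) and the continuity of $\Psi_1$ on $(0,+\infty)$ (Lemma \ref{ContinuidadePsi}), the image $\Psi_1((0,+\infty))$ is a connected subset of $(0,+\infty)$ that accumulates at $0$ and contains values larger than $R_0$. The intermediate value theorem then places $R_0$ in this image, and $S$ is closed by continuity and bounded above by $C_0$, so $h_0:=\max S$ exists.

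To prove $\max U_{B'}\ge\max U$ for an arbitrary radial solution $U$ of $({\rm \tilde P}_{B''})$ with $B''=B_{R_1}\subset B'$ and $U=0$ on $\partial B''$, write $h_1:=U(0)=\max U$. By uniqueness for the radial IVP \eqref{radialequation2} (the extension of Proposition A4 of \cite{FLS} recorded in Remark \ref{AssociacaoAlphaR}), $U=\Psi_2(h_1)$ and $R_1=\Psi_1(h_1)$. Suppose for contradiction that $h_1>h_0$. The maximality of $h_0$ in $S$ ensures $\Psi_1(h)\ne R_0$ for every $h>h_0$, so by continuity of $\Psi_1$ and connectedness of $(h_0,+\infty)$, the function $\Psi_1-R_0$ has a single sign on that interval. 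If $\Psi_1>R_0$ there, then $R_1=\Psi_1(h_1)>R_0$, contradicting $R_1\le R_0$. If $\Psi_1<R_0$ there, pick any $h^*>\max(h_0,C_0)$; the implication above forces $\Psi_1(h^*)>R_0$, again contradicting the assumed sign. Hence $h_1\le h_0$.

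For the strict inequality, assume $U\ne U_{B'}$ but $h_1=h_0$. Then $R_1=\Psi_1(h_1)=R_0$, so $B''=B'$, and both $U$ and $U_{B'}$ solve \eqref{radialequation2} with the same initial data $(h_0,0)$ at $r=0$; uniqueness of that IVP forces $U=U_{B'}$, a contradiction. Hence $h_1<h_0$ and the inequality is strict. The most delicate point is the nonemptiness of $S$: $\Psi_1$ need not be monotone, so placing $R_0$ inside its image requires the interplay between the shooting ODE and the global $L^\infty$ bound of Remark \ref{classicalsol}; the comparison with $U$ and the strict case are then quick consequences of continuity together with uniqueness for the radial IVP.
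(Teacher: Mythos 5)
Your proposal is correct and follows essentially the same route as the paper's proof: exploit the shooting map $\Psi=(\Psi_1,\Psi_2)$, use the a priori $L^\infty$ bound of Remark \ref{classicalsol}/\eqref{sup3} for large $h$ and Lemma \ref{littleimprovment} for small $h$ to place $R_0$ in the image of $\Psi_1$ and bound the set $\{h:\Psi_1(h)=R_0\}$, set $h_0$ as its maximum, and then argue by contradiction via continuity of $\Psi_1$ and radial IVP uniqueness. The only cosmetic difference is that the paper invokes the intermediate value theorem directly on $[U(0),M+1]$, whereas you argue via constancy of the sign of $\Psi_1-R_0$ on $(h_0,\infty)$ — logically the same step.
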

\begin{proof} First we note that Lemma \ref{littleimprovment} implies
that
$$\Psi_1(h_1) = R_{h_1} \le \Theta_1(h_1) < R_0 \quad {\rm for \; small} \; h_1 , $$  since $\Theta_1(h) \to 0$ as $h \to 0$.
We can also prove that $\Psi_1(h_2) > R_0$ for a large $h_2$.
Indeed, from \eqref{sup3}, any solution of $({\rm P}_{B''})$
is bounded by $C|B'|^{1/q}$ if $n < q$ or by $C|B'|^{1/n}$ if $n
\ge q$. Hence,
\begin{equation}\Psi_1(h) > R_0 \quad {\rm for} \quad h >
M = \max \{C|B'|^{1/q}, C |B'|^{1/n}\}, \label{alphabound}
\end{equation}
otherwise a ball of radius $\Psi_1(h) \le R_0$ posses a solution
of height $h > M$ contradicting \eqref{sup3}.

Thus, from the continuity of $\Psi_1$, the set $A=\{h \; | \;
\Psi_1(h) = R_0 \}$ is not empty and is bounded by $M$. Then, we
can define $h_0= \max A$ and $U_{B'}= \Psi_2(h_0)$. Let $U$ be a
radial solution of $({\rm P}_{B''})$ satisfying $U=0$ on $\partial B''$, where
$B''=B_{\tilde{R}}$ with $\tilde{R} \le R_0$. Note that
$\tilde{R}=\Psi_1(U(0))$ and, thus, inequality (\ref{alphabound})
implies that $U(0) \le M$. To prove the lemma we have to show that
$U(0)\le h_0$. Suppose that $U(0) > h_0$. For $h=M +1$, we have
$\Psi_1(h)
> R_0$ from (\ref{alphabound}). Summarizing,
$$ \Psi_1(U(0)) = \tilde{R} \le R_0 < \Psi_1(h) \quad {\rm and } \quad U(0) < h.$$
Therefore, from the continuity of $\Psi_1$, there exists $h_1 \in
[U(0),h)$ such that $\Psi_1(h_1)= R_0$. But this contradicts $h_1
\ge U(0) > h_0$ and the definition of $h_0$. Hence $U(0) \le h_0$.
Furthermore, the equality happens only if $U=U_{B'}$, since the
solution of (\ref{radialequation2}) is unique.
\end{proof}

\begin{proof} {\it of Theorem \ref{TeorSolucRadial}}
\\ \underline{Possibility 1}: $m= 0$
\\ Let $U_{B'}$ be the function defined in the previous lemma and $U$ a solution of
$({\rm P}_{B''})$ with $U=0$ on $\partial B''$, where $B'' \subset B'$ are concentric balls. The set
$$ C = \{ h >0 \; | \; w_{h}:=\Psi_2(h) \ge U_{B'} \; {\rm in} \; B' \; {\rm and} \; w_{h} \ge U \; {\rm in} \; B'' \} $$
is not empty. To prove that, let $h > \max U_{B'}$ such that $h
\not\in C$. For instance, suppose that $w_{h}$ does not satisfy
$w_{h} \ge U_{B'}$ in $B'$. Using that $w_{h}$ and $U_{B'}$ are
continuous radial functions and $w_{h}(0)=h > U_{B'}(0)$, we conclude
that there exists $B'' \subset B'$ such that $w_{h} > U_{B'}$ in $B''$
and $w_{h}=U_{B'}$ in $\partial B''$. Denoting $t_0=U_{B'}^{-1}(B'')$, we have $t_0 \le \max U_{B'} \le M$, where $M$ is given by \eqref{alphabound}.
Hence, the function $\tilde{f}(t)=f(t+t_0)$ satisfies $$\tilde{f}(t) \le f(t+M) \le \alpha(t+M)^{q-1}+ \beta \le \alpha't^{q-1} + \beta',$$
where $\alpha'$ is any real in $(\alpha, C_* \lambda_{B} )$ and $\beta'$ is a constant that depends on $\alpha'$, $\beta$ and $M$.
Note that $v= w_h - t_0$ satisfies $$-{\rm div}(\bar{a}(v,\nabla v))= \tilde{f}(v),$$ where $\bar{a}(t,{\rm z})=\tilde{a}(t+t_0,{\rm z})$, with the boundary data $v=0$ on $B''$. Since $\bar{a}$ and $\tilde{f}$ satisfy
(H1)-(H6), it follows from \eqref{sup3} that
$\sup v \le \tilde{M}$, where $\tilde{M}$ is a constant that depends on $n$, $q$, $\alpha'$, $\beta'$, $C_*$, and $|\Omega|$. Thus
$w_h \le \tilde{M} + M$. This inequality also holds, by the same argument, when condition $w_{h} \ge U$ in $B'$ is not satisfied.
Therefore, $h \in C$ for $h > \tilde{M} + M$, proving that $C$ is not empty.

Let $\alpha_1 = \inf C$. From the continuity of $\Psi_1$ and the
$C^1$ estimate of Lemma \ref{ContinuidadePsi},
$R_1=\Psi_1(\alpha_1) \ge R_0$, $w_{\alpha_1}=\Psi_2(\alpha_1) \ge
U_{B'}$ in $B'$, and $w_{\alpha_1} \ge U$ in $B''$. Hence
$\alpha_1=w_{\alpha_1}(0) \ge U_{B'}(0)$. If
$\alpha_0:=U_{B'}(0)=\alpha_1$, then $w_{\alpha_1}=U_{B'}$ and, therefore, $U_{B'}
\ge U$ proving the theorem. Suppose that $\alpha_1 > \alpha_0$.
Then $R_1 > R_0$, otherwise $R_0=R_1=\Psi_1(\alpha_1)$
contradicting $\alpha_1 > \alpha_0 = \max \{\alpha \; | \;
\Psi_1(\alpha) = R_0 \}$. Let
$$ d_1= \inf_{x \in B'} (w_{\alpha_1}(x) - U_{B'}(x))\ge 0 \quad {\rm and}
\quad d_2= \inf_{x \in B''} (w_{\alpha_1}(x) - U(x)) \ge 0.
$$ If $d_1=0$, consider $x_1 \in \bar{B'}\backslash \{ 0 \}$ such that $w_{\alpha_1}(x_1) =
U_{B'}(x_1)$. Since $R_1 > R_0$, we have $w_{\alpha_1} >0$ in
$\partial B'$ and, from $U_{B'}=0$ in $\partial B'$, it follows
that $x_1 \in B'\backslash \{ 0 \} $. Observe also that $\nabla w_{\alpha_1}(x_1) = \nabla U_{B'}(x_1) $,
since $w_{\alpha_1} \ge U_{B'}$.  Then, using that $w_{\alpha_1}$ and $U_{B'}$ are radial, we infer from the uniqueness of solution for ODE that $w_{\alpha_1}=U_{B'}$, contradicting
$w_{\alpha_1}(0)=\alpha_1 > \alpha_0=U_{B'}(0)$. Hence $d_1 > 0$
and, by the same argument, $d_2 >0$. These contradict Lemma \ref{ContinuidadePsi} and the definition of $\alpha_1$, proving that $U_{B'} \ge U$.
\\ \underline{Possibility 2}: $m > 0$
\\Consider the equation
$$ -{\rm div } \; \bar{a} (V, \nabla V) = \tilde{f}(V), $$
where $\bar{a}(t,{\rm z})=\tilde{a}(t+m,{\rm z})$ and $\tilde{f}(t)=f(t+m)$. Notice that $\bar{a}$ and $\tilde{f}$ satisfy (H1)-(H6) with the constants $n$, $p$, $q$, $q_0$, $\alpha'$, $\beta'$, $C_*$,
$C^*$, $C_s$ and $|\Omega|$, where $\alpha'$ and $\beta'$ can be chosen, as in Possibility 1, s.t. $\alpha' \in (\alpha, C_* \lambda_{B} )$ and $\beta'=\beta'(\alpha',\beta,m)$. Then, from Possibility 1, let $\tilde{U}\in W_0^{1,p}(B')$ be the maximal solution associated to this equation.
If $U$ is a solution of $({\rm \tilde{P}}_{B''})$ with $U \le m$ on $\partial B''$, then  $U-m \le 0$ or $U-m$ is also a solution of this equation in some ball contained in $B''$.
In both situations, since $\tilde{U}$ is maximal, $\tilde{U} \ge U-m$. So we conclude Possibility 2, taking $U_{B'}=\tilde{U} + m $.

To prove the strict inequality in case $U \not\equiv U_{B'}$, we must observe that if $U(x_0) = U_{B'}(x_0)$ at some $x_0 \in B''$, then $\nabla U(x_0)=\nabla U_{B'}(x_0)$ since $U \le U_{B'}$.
This contradicts the classical results of uniqueness of solution for ODE if $x_0\ne 0$ and the uniqueness established by Proposition A4 of \cite{FLS} if $x_0=0$, as we already pointed out.
\end{proof}

\begin{theorem}
Let $B'=B_{R_0}$ be a open ball in $\mathbb{R}^n$ satisfying $|B'| \le |\Omega|$ and suppose that $\tilde{a}$ and $f$ satisfy
conditions (H1)-(H6).
If $f(0) = 0$ and $m \ge 0$, then there exists a nonnegative solution $U_{B'}$ of
$({\rm \tilde{P}}_{B'})$  with $U_{B'}=m$ on $\partial B''$, possibly null, s.t. for any radial solution $U$ of $({\rm \tilde{P}}_{B''})$ with $U\le m$ on $\partial B''$,
$$ U_{B'} \ge U \quad in \quad B'', $$
where $B'' \subset B'$ are concentric open balls. If $U_{B'}$ is not trivial, then $U_{B'}$ is positive and the inequality is strict unless $U$ and $U_{B'}$ are equals.
\label{TeorSolucRadialfnull}
\end{theorem}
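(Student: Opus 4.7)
The plan is to perturb $f$ so as to restore the hypothesis $f(0)>0$ of Theorem \ref{TeorSolucRadial}, pass to the limit, and adapt the set-infimum argument of that theorem so that the dominance persists even though the given $U$ and the approximating maximal solutions live in different ODE families. Set $f_n(t):=f(t)+1/n$; each $f_n$ inherits (H1), (H2) and (H5) (with $\beta$ replaced by $\beta+1$) uniformly in $n$, is positive at the origin, and Theorem \ref{TeorSolucRadial} applied to $f_n$ yields a radial solution $U_n$ of $-{\rm div}\,\tilde{a}(V,\nabla V)=f_n(V)$ in $B'$ with $V=m$ on $\partial B'$ such that $U_n\ge V$ for every radial $f_n$-solution $V$ on a concentric subball with $V\le m$ on its boundary.

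I would next establish uniform bounds. Applying Remark \ref{classicalsol} to $U_n-m$, which satisfies a zero-Dirichlet problem whose structural data are independent of $n$, gives $\|U_n\|_\infty\le C$. The radial identity \eqref{bradialrelation} for $U_n$ reads
\[
n\omega_n\,b\bigl(U_n(R),-\partial_r U_n(R)\bigr)R^{n-1}=\int_{B_R}f_n(U_n)\,dx\le CR^n,
\]
and (H3) then yields uniform $C^1_{\rm loc}(\overline{B'}\setminus\{0\})$ bounds together with $\partial_r U_n(R)\to 0$ as $R\to 0^+$, uniformly in $n$. By Arzel\`a--Ascoli I extract a subsequence converging in $C^0(\overline{B'})\cap C^1_{\rm loc}(\overline{B'}\setminus\{0\})$ to a nonnegative radial function $U_{B'}$, which passes in the weak formulation to a solution of $({\rm \tilde{P}}_{B'})$ with $U_{B'}=m$ on $\partial B'$ and is allowed to be identically zero (necessarily with $m=0$) when $U_n(0)\to 0$.

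The core step is the dominance $U\le U_n$ in $B''$ for each $n$, where $U$ is a given radial solution of $({\rm \tilde{P}}_{B''})$ with $U\le m$ on $\partial B''$. I would run the set-infimum argument of Theorem \ref{TeorSolucRadial} with the family $w_h^n$ of radial $f_n$-ODE orbits having $w_h^n(0)=h$: the set
\[
C^n:=\{h>0\;:\;w_h^n\ge U_n\text{ in }B'\text{ and }w_h^n\ge U\text{ in }B''\}
\]
is nonempty for $h$ large by \eqref{sup3}, and I claim $\alpha_1^n:=\inf C^n=U_n(0)$, whence $w_{\alpha_1^n}^n=U_n\ge U$. If $\alpha_1^n>U_n(0)$, the case $d_1^n:=\inf_{B'}(w_{\alpha_1^n}^n-U_n)=0$ is handled exactly as in Theorem \ref{TeorSolucRadial} since both orbits solve the same $f_n$-ODE. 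The new case is $d_2^n:=\inf_{B''}(w_{\alpha_1^n}^n-U)=0$: attainment on $\partial B''$ is excluded because the vanishing radius of $w_{\alpha_1^n}^n$ exceeds $R_0\ge R_1$, so $w_{\alpha_1^n}^n>0=U$ there; attainment at any $r_0\in[0,R_1)$ would force $w_{\alpha_1^n}^n$ and $U$ to share value and first radial derivative at $r_0$, but the respective ODE's for $f_n$ and $f$ then produce a strict second-order mismatch
\[
(w_{\alpha_1^n}^n)''(r_0)-U''(r_0)\;\propto\;f(U(r_0))-f_n(U(r_0))=-\tfrac{1}{n}<0,
\]
forcing $w_{\alpha_1^n}^n<U$ near $r_0$ by a Taylor expansion, in contradiction with $w_{\alpha_1^n}^n\ge U$. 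Hence $\alpha_1^n=U_n(0)$, so $U_n\ge U$ in $B''$, and letting $n\to\infty$ along the convergent subsequence delivers $U_{B'}\ge U$ in $B''$.

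For the closing assertions, if $U_{B'}$ is nontrivial then radial monotonicity gives $U_{B'}(0)>0$, and \eqref{bradialrelation} combined with (H5) forces $|\nabla U_{B'}|>0$ on $B'\setminus\{0\}$ and $U_{B'}>0$ throughout $B'$; a coincidence $U(x_0)=U_{B'}(x_0)$ with $U\not\equiv U_{B'}$ and $U\le U_{B'}$ would impose equal first radial derivatives at $x_0$, contradicting classical ODE uniqueness for $x_0\ne 0$ and Proposition A4 of \cite{FLS} for $x_0=0$, exactly as in the closing lines of Theorem \ref{TeorSolucRadial}. The principal obstacle is the third paragraph: preserving the set-infimum argument when $U$ belongs to a different ODE family from $w_h^n$, which is resolved precisely by the strict sign of the perturbation $f_n-f=\tfrac{1}{n}$ producing the second-order gap that prevents any putative interior touching.
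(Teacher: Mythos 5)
Your approach differs substantially from the paper's and contains a nontrivial gap.

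The paper's proof perturbs by shifting the argument of both $f$ and $\tilde{a}$: it sets $f_k(t)=f(t+t_k+m)$, $a_k(t,{\rm z})=\tilde{a}(t+t_k+m,{\rm z})$ with $t_k\downarrow 0$. The payoff is that if $U$ is an $f$-solution of $({\rm \tilde{P}}_{B''})$ with $U\le m$ on $\partial B''$, then $U-t_k-m$ is either nonpositive or is itself an $f_k$-solution with zero Dirichlet data on a subball, so the maximality of the Theorem-\ref{TeorSolucRadial} solution $U_k$ applies \emph{directly} to give $U_k\ge U-t_k-m$, with no further comparison argument needed; one then passes to the limit. Your choice $f_n=f+1/n$ puts $U$ and the approximating maximal solutions in different ODE families, which is exactly why you have to re-run the whole set-infimum argument and invent the second-order comparison at an interior touching point. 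That is a heavier route to the same place, and the strict ordering $f_n>f$ that you rely on is available just as well from the shift; the paper's version avoids the entire difficulty.

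The gap: your second-order mismatch argument at a touching radius $r_0$ quietly assumes a regular Taylor expansion of the ODE, but the radial equation $(\tilde{e}\,w')'+\frac{n-1}{r}\tilde{e}\,w'=-f(w)$ is degenerate at $r=0$ (the coefficient $\frac{n-1}{r}$ blows up, and $\tilde{e}(t,s)$ is only controlled for $s\ne 0$ under (H3), with $w'(0)=0$). So the claim ``$(w^n_{\alpha_1^n})''(r_0)-U''(r_0)\propto -1/n<0$ by Taylor expansion'' does not stand as written when $r_0=0$, and $r_0=0$ cannot be excluded a priori (nothing in your setup forces $\alpha_1^n>U(0)$). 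The fix is to compare the integrated form $r^{n-1}\tilde{e}\,w'(r)=-\int_0^r s^{n-1}f_n(w)\,ds$ versus the analogous identity for $U$; since $w^n(0)=U(0)$ and $f_n>f$, one gets $|\tilde{e}\,(w^n)'|>|\tilde{e}\,U'|$ for small $r$, hence $w^n<U$ near $0$, a contradiction. You should also note that your set-infimum argument as written only covers $m=0$ (you use ``$w^n_{\alpha_1^n}>0=U$ on $\partial B''$''); for $m>0$ one must first reduce by shifting, exactly as in Possibility 2 of Theorem \ref{TeorSolucRadial}. With those two repairs the proof goes through, but at that point you have essentially reproduced, in a harder form, the bookkeeping the paper sidesteps by choosing the shift perturbation.
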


\begin{proof}
Let $(t_k)$ be a sequence of positive reals s.t. $t_k \downarrow 0$,
$f_k(t):=f(t+t_k+m)$ and $a_k(t,{\rm z}):=\tilde{a}(t+t_k+m, {\rm z})$. Since $a_k$ and $f_k$ satisfy (H1)-(H6) and $f_k(0)=f(t_k+m) > 0$, we can apply Theorem \ref{TeorSolucRadial} to obtain the maximal solution $U_k \in W_0^{1,p}(B')$ of
\begin{equation}
 -{\rm div} \, a_k(v, \nabla v) = f_k(v)
\label{equacaofnull}
\end{equation}
in $B'$. Observe that if $U$ is a radial solution of $({\rm \tilde{P}}_{B''})$ satisfying $0 \le U \le m$, then $U - t_k -m \le 0$ or $U - t_k -m $ is also a solution of \eqref{equacaofnull}
in a ball contained in $B''$ vanishing on the boundary of this ball.
Then, $U_k > U -t_k-m$. Furthermore, since the important constants ($n$, $q$, $\alpha'$, $\beta'$, $C_*$, $|\Omega|$) associated $a_k$ and $f_k$  can be chosen not depending $k$,  $U_k$ is bounded in the $L^{\infty}$ norm by the same argument as in Theorem \ref{TeorSolucRadial}. Therefore, following the estimates of Remark \ref{AssociacaoAlphaR} we get that $\nabla U_k$ is a family of equicontinuous functions. Hence, for some subsequence that we denote by $U_k$, it follows that $U_k$ converges to some
function $U_0$ in the $C^1$ norm. Therefore, $U_{B'}:=U_0 +m $ is a solution of $({\rm \tilde{P}}_{B'})$, with $U_{B'}=m$ on $\partial B'$, and $U_{B'} \ge U$, proving the first part.

Suppose now that $U_{B'}$ is not trivial. According to Remark \ref{AssociacaoAlphaR}, $U_{B'}=w_0(|x|)$ for some nonnegative nonincreasing function $w_0: [0,R_0] \to \mathbb{R}$.
If $w_0(r^*)=0$ for some $r^* \in [0,R_0)$, then $w'(r^*)=0$ since $w$ is differentiable. But, this contradicts Lemma \ref{divergencelemma} and the fact that $f(U_{B'})$ is positive in some nontrivial set. Then $U_{B'}$ is positive
in $B'$. If $U$ is a radial solution in $B''$ different from $U_{B'}$, then these functions cannot be equal at some point, otherwise $U$ touches $U_{B'}$ by below contradicting the uniqueness of solution for ODE.
\end{proof}

\begin{remark}
 If (H6) is not satisfied in Theorem \ref{TeorSolucRadial} or \ref{TeorSolucRadialfnull}, we still have the existence of $U_B$ such that $U_B \ge U$, as we will see in the next section
as a particular case of the main theorem. However, we cannot guarantee the strict inequality. Maybe it is possible that $U_B(0)=U(0)$ and $U_B \not\equiv U$, since (H6) is important for uniqueness of solution for \eqref{radialequation2}.
\end{remark}

\section{Estimates for sublinear equations}
\label{MR}

\begin{proposition}
Let $\Omega \subset \mathbb{R}^n$ be a bounded
open set, $B$ be the ball centered at the origin with
$|B|=|\Omega|$, and suppose that $a$ and $f$ satisfy hypotheses $(H1)$-$(H5)$ and $\tilde{a}$ satisfies $(H3)$-$(H4)$, possibly with different constants $(\tilde{C}_s, \tilde{C}_*, \tilde{C}^*)$ and different powers $(\tilde{p}, \tilde{q}, \tilde{q}_0)$. Assume also that
$\tilde{a}(t,{\rm z})\cdot {\rm z} \le a(t,{\rm z})\cdot {\rm z} $ for any $ {\rm z} \in \mathbb{R}^n$ and  $\tilde{a}(t, {\rm z}) \cdot {\rm z} = \tilde{C}_s|{\rm z}|^{\tilde{q}_0}$ for $|{\rm z}|< \delta$, where $\delta \in (0,1)$.
Then, there exists a radial solution $U_B \in W_0^{1,p}(B)$ of $({\rm \tilde{P}}_B)$ s.t. for any solution $u$ of $({\rm P}_{\Omega})$,
$$ U_B \ge u^{\sharp} \quad {\rm in } \; \Omega^{\sharp}. $$
\label{primeiracomparacao}
\end{proposition}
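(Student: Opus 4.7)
\emph{Proof plan.} I will argue by a Talenti-style comparison combined with the maximal radial solution constructed in Section~\ref{RC}.

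First, I verify that $\tilde a$ satisfies (H6) near the origin. The hypothesis $\tilde a(t,{\rm z})\cdot{\rm z}=\tilde C_s|{\rm z}|^{\tilde q_0}$ for $|{\rm z}|<\delta$, combined with the structure $\tilde a(t,{\rm z})=\tilde e(t,|{\rm z}|){\rm z}$ from (H3), gives $|\tilde a(t,s{\rm w})|=\tilde C_s s^{\tilde q_0-1}$ for small $s>0$, whence $\frac{d}{ds}|\tilde a(t,s{\rm w})|\ge|\tilde a(t,s{\rm w})|^\mu$ for some $\mu\in[0,2)$. Then Theorem~\ref{TeorSolucRadial} (if $f(0)>0$) or Theorem~\ref{TeorSolucRadialfnull} (if $f(0)=0$), applied with $B'=B$ and $m=0$, produces a maximal radial solution $U_B\in W_0^{1,\tilde p}(B)$ of $({\rm \tilde P}_B)$.

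Second, I derive a subsolution-type inequality for $u^\sharp$. For any $0\le t_1<t_2$ I combine the extended P\'olya--Szeg\H{o} principle of Remark~\ref{obs2}, applied to the convex function $(t,{\rm z})\mapsto\tilde a(t,{\rm z})\cdot{\rm z}$ on the strip $\{t_1<u<t_2\}$, with the assumption $\tilde a\cdot{\rm z}\le a\cdot{\rm z}$, the divergence identity~\eqref{estrategica1} for $u$ on the same strip, and the equimeasurability of $u$ with $u^\sharp$. This yields
\[
\int_{\{t_1<u^\sharp<t_2\}}\!\!\tilde a(u^\sharp,\nabla u^\sharp)\cdot\nabla u^\sharp\,dx\le(t_2-t_1)\!\!\int_{\{u^\sharp>t_2\}}\!\!f(u^\sharp)\,dx+\!\!\int_{\{t_1<u^\sharp<t_2\}}\!\!(u^\sharp-t_1)f(u^\sharp)\,dx.
\]
Dividing by $t_2-t_1$, using the coarea formula on the left-hand side, and letting $t_2\downarrow t_1$ produces the Lebesgue-point inequality
\[
\int_{\{u^\sharp=t\}}\frac{\tilde a(u^\sharp,\nabla u^\sharp)\cdot\nabla u^\sharp}{|\nabla u^\sharp|}\,dH^{n-1}\le\int_{\{u^\sharp>t\}}f(u^\sharp)\,dx\quad\text{for a.e.\ }t\ge0,
\]
which is the natural ``subsolution'' counterpart of the identity in Lemma~\ref{divergencelemma}.

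Finally, I compare. Since $U_B$ is a classical radial solution, Lemma~\ref{divergencelemma} yields the analogous identity with equality. Translating both relations via the radii $r(t)=(\mu(t)/\omega_n)^{1/n}$---using $\mu_u=\mu_{u^\sharp}$, the coarea identity $-\mu'_u(t)=\int_{\{u=t\}}|\nabla u|^{-1}\,dH^{n-1}$, and the fact that $|\nabla u^\sharp|$ is constant on each level sphere---recasts them as a differential inequality for $r_u$ and a matching differential equation for $r_{U_B}$, of the form required by Lemma~\ref{comparacaoradial}. The main obstacle is an apparent circularity: the hypothesis~\eqref{integraldenivel} of Lemma~\ref{comparacaoradial} requires $\int_{\{U_B>t\}}f(U_B)\,dx\ge\int_{\{u^\sharp>t\}}f(u^\sharp)\,dx$, which, by the monotonicity of $f$ (H2), is equivalent to the very conclusion $u^\sharp\le U_B$. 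I plan to break this with a first-crossing argument: if $\mu_u(t^*)>\mu_{U_B}(t^*)$ for some $t^*>0$, pick the largest $t_0<t^*$ with $\mu_u(t_0)=\mu_{U_B}(t_0)=:\omega_n r_0^n$, apply Theorem~\ref{TeorSolucRadial} on $B_{r_0}$ with shifted data $\bar a(s,{\rm z})=\tilde a(s+t_0,{\rm z})$ and $\bar f(s)=f(s+t_0)$ to obtain a maximal radial solution there, and use the subsolution inequality together with Lemma~\ref{comparacaoradial} to contradict $\mu_u>\mu_{U_B}$ on a right neighborhood of $t_0$.
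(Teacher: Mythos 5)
Your first two steps --- verifying (H6) for $\tilde a$, invoking Theorem~\ref{TeorSolucRadial} or~\ref{TeorSolucRadialfnull} to produce $U_B$, and deriving the subsolution inequality for $u^\sharp$ from the extended P\'olya--Szeg\H{o} principle, the identity~\eqref{estrategica1}, and $\tilde a\cdot{\rm z}\le a\cdot{\rm z}$ --- are sound and line up with ingredients the paper uses in Lemmas~\ref{divergencelemma} and~\ref{SolucaoNoPico}. You also correctly identify the central obstacle: feeding the two level-set relations into Lemma~\ref{comparacaoradial} requires precisely the conclusion you set out to prove.

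The first-crossing argument you propose, however, does not break that circularity. At the crossing level $t_0$, with $\mu_u(t_0)=\mu_{U_B}(t_0)=\omega_n r_0^n$ and $\mu_u>\mu_{U_B}$ just above $t_0$, the maximal radial solution $V$ of the shifted problem on $B_{r_0}$ satisfies only $V\ge U_B\big|_{B_{r_0}}-t_0$; the two need not be equal, because maximality of $U_B$ at radius $R_0$ does not force $U_B-t_0$ to be the maximal solution at the smaller radius $r_0$. So even if you could show $u^\sharp-t_0\le V$ on $B_{r_0}$, you would not recover $u^\sharp\le U_B$. Worse, you cannot show $u^\sharp-t_0\le V$ by the same mechanism: the inequality Lemma~\ref{comparacaoradial} needs, namely $\int_{\{V>t\}}\bar f(V)\,dx\ge\int_{\{u^\sharp-t_0>t\}}\bar f(u^\sharp-t_0)\,dx$, is governed by the unknown ordering of $\mu_V$ and $\mu_u(\,\cdot+t_0)$ above $t_0$; the hypothesis $\mu_u>\mu_{U_B}$ near $t_0$ pushes in the wrong direction, and $V\ge U_B-t_0$ is of no help. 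In addition, Lemma~\ref{comparacaoradial} requires strictly $R_1>R_2$, whereas here both functions live on the same ball $B_{r_0}$.

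The paper circumvents the circularity by working top--down, not bottom--up, and the key ingredient you are missing is Lemma~\ref{SolucaoNoPico}. That lemma first establishes dominance by some radial solution near $\max u$: there the gradient of the radial candidate is small, so the hypothesis $\tilde a(t,{\rm z})\cdot{\rm z}=\tilde C_s|{\rm z}|^{\tilde q_0}$ for $|{\rm z}|<\delta$ collapses the operator into the $\tilde q_0$--Laplacian, and since $f(t)/(t-(M-\varepsilon))^{\tilde q_0-1}$ is decreasing for $t$ close to $M=\sup u$, one can run the variational comparison of Theorem~\ref{teoremaA} together with the uniqueness result of \cite{BK} to obtain $\max u\le\max U_{t_1}$ for a radial solution $U_{t_1}$ at a level $t_1$ just below $M$. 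This is where the hypothesis on $\tilde a$ near the origin earns its keep --- you used it only to check (H6), but its real role is to enable the $\tilde q_0$--Laplacian reduction that provides the anchor at the top. With that anchor in hand, the paper slides the comparison level down (Assertions~1 and~2 in the proof of Proposition~\ref{primeiracomparacao}, using Proposition~\ref{tipodeprincipiodomaximogeral} and Theorems~\ref{TeorSolucRadial}/\ref{TeorSolucRadialfnull} to produce dominating radial solutions at lower levels), concludes $\inf A=0$, hence finds some radial solution $U_0\ge u^\sharp$, and finally uses maximality of $U_B$ to get $U_B\ge U_0\ge u^\sharp$. Your plan contains no analogue of this top-level anchoring step, which is the heart of the proof.
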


\begin{remark}
\label{aStarAuxiliar}
There exists a function $a^*({\rm z}) \in C^0( \mathbb{R}^n; \mathbb{R}^n ) \cap C^1(\mathbb{R}^n \backslash \{0\}; \mathbb{R}^n)$, of the form $a^*({\rm z}) =  b^*(|{\rm z}|){\rm z}/|{\rm z}|$, where $b^* \in C^1(\mathbb{R} \backslash \{0\})$ is  positive on $\mathbb{R} \backslash \{0\}$,
$a^*(0)=0$, $a^*({\rm z})\cdot {\rm z}$ is convex, that satisfies
\\ $\bullet$  $|a^*| \le |\tilde{a}|$,
\\ $\bullet$ $a^*({\rm z}) \cdot {\rm z} = \tilde{C}_s|{\rm z}|^{\tilde{q}_0}$ for $|{\rm z}|< \delta$,
\\ $\bullet$ $a^*({\rm z}) \cdot {\rm z} \ge \eta \, \tilde{C}_*|{\rm z}|^{\tilde{q}}$ for $|{\rm z}| \ge 1$, where $\eta \in (0,1)$,
\\ $\bullet$ $a^*({\rm z}) \cdot {\rm z} = \eta \, \tilde{C}_*|{\rm z}|^{\tilde{q}}$ for ${\rm z}$ large.
\\ For that, define $b^*$ in $[0,\delta]$ by $b^*(s)=\tilde{C}_s s^{\tilde{q}_0-1}$.
Then, extend $s \: b^*(s)$ linearly to $[\delta,1]$ in such a way that it is $C^1$ in $[0,1]$. Defining $a^*({\rm z})= b^*(|{\rm z}|) {\rm z}/|{\rm z}|$, we have that $|a^*| \le |\tilde{a}|$
in $B_1(0)$ from the convexity of $\tilde{a}(t,{\rm z})\cdot {\rm z}$. Let $h=b^*(1)$ and $\eta' < \min \{ 1, h/C^* \}$.  Hence, $s \: b^*(s) |_{s=1} >  \eta' \, \tilde{C}_* s^{\tilde{q}}|_{s=1}$ and we can extend $s \: b^*(s)$ linearly until the graph $(s,s\: b^*(s))$ reaches $(s, \eta' \tilde{C}_*s^{\tilde{q}})$ at some point $s_0$. So define $b^*(s)$ that satisfies
$ s\: b^*(s) < \eta' \tilde{C}_*s^{\tilde{q}}$ for $s > s_0$, $s \: b^*(s)$ is convex and $s \: b^*(s) = \eta' \tilde{C}_*s^{\tilde{q}}/2$ for $s$ large. Taking $\eta=\eta'/2$, the function $a^*({\rm z})$ defined from $b^*$ as before, fulfills the requirements.
\end{remark}

\begin{lemma}
\label{SolucaoNoPico}
Assume the same hypotheses as in the previous proposition
and that $u$ is a solution of $($\ref{eq1}$)$. Then there exists
 $t_0 \le \sup u$, an open ball $B^*$ centered at $0$ with the same measure as
$\{u \ge t_0 \}$, and a radial solution $U_{t_0}$ for
\begin{equation}  \left\{
\begin{array}{rcll} - {\rm div } \; \tilde{a} (V, \nabla V) & = & f(V) &
\quad {\rm in} \quad B^* \\[2pt]
V  & = & t_0 & \quad {\rm on} \quad \partial B^*
\end{array} \right. \label{ut0} \end{equation}
such that $U_{t_0} \ge u^{\sharp}$ in $B^*$.
\end{lemma}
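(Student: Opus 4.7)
My strategy is a continuation argument in $t$ that builds $U_{t_0}$ starting near the top of $u$ and decreasing $t$ until the comparison fails. For each $t\in[0,\sup u]$, let $B^*_t$ denote the ball centered at $0$ with $|B^*_t|=|\{u\ge t\}|$, and apply Theorem \ref{TeorSolucRadial} (or Theorem \ref{TeorSolucRadialfnull} in case $f(0)=0$) to the translated equation $-\mathrm{div}\,\bar a(V,\nabla V)=\bar f(V)$, where $\bar a(s,{\rm z}):=\tilde a(s+t,{\rm z})$ and $\bar f(s):=f(s+t)$. This yields a maximal radial solution $V_t$ of \eqref{ut0} on $B^*_t$ with $V_t=t$ on $\partial B^*_t$. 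Set $T:=\{t\in[0,\sup u] : V_t\ge u^\sharp \text{ in } B^*_t\}$; once $T$ is shown to be nonempty and $\inf T\in T$, I take $t_0:=\inf T$, $B^*:=B^*_{t_0}$, and $U_{t_0}:=V_{t_0}$.

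To place some $t$ close to $\sup u$ in $T$, I would invoke the radial comparison Lemma \ref{comparacaoradial}. Because $V_t$ and $u^\sharp$ live on the same ball with the same boundary value $t$, I first enlarge the domain: take the maximal radial solution $\widetilde V$ on a slightly larger concentric ball with boundary value $t-\varepsilon$, and let $R_1>r(t)$ be the radius at which $\widetilde V$ attains $t$. With $u_1=\widetilde V$ on $B_{R_1}$ and $u_2=u^\sharp$ on $B^*_{t}$ playing the roles in Lemma \ref{comparacaoradial}, the level-set integral hypothesis there reduces, via Lemma \ref{divergencelemma} applied to $\widetilde V$, to a lower bound on $\int_{\{\widetilde V>s\}}f(\widetilde V)\,dx$ in terms of the corresponding integral involving $u^\sharp$. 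The latter is in turn controlled by chaining: Lemma \ref{divergencelemma} applied to $u$ gives the level-set integral of $u$ against $a$, equimeasurability (Remark \ref{obs2}) preserves the $f$ integrals, the hypothesis $\tilde a\cdot{\rm z}\le a\cdot{\rm z}$ passes from $a$ to $\tilde a$, and finally the P\'olya-Szeg\"o principle (Remark \ref{obs2}) applied to the convex increasing-in-${\rm z}$ integrand $\tilde a(t,{\rm z})\cdot{\rm z}$ passes from $u$ to $u^\sharp$.

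Closure of $T$ under taking infima follows from the $C^1$-continuity of radial solutions with respect to the central height and boundary level (Lemma \ref{ContinuidadePsi}) combined with right-continuity of $\mu_u$, together with the strict maximum-at-the-origin given by the uniqueness theory for \eqref{radialequation2}. The main obstacle is converting the \emph{integrated} form of P\'olya-Szeg\"o into the \emph{pointwise} level-set inequality that Lemma \ref{comparacaoradial} requires; this is where I expect the auxiliary integrand $a^*$ from Remark \ref{aStarAuxiliar} to enter as a technical intermediate, since $|a^*|\le|\tilde a|$ in the relevant range while $a^*\cdot{\rm z}$ has the clean power-law behavior that allows explicit radial ODE estimates in place of the delicate differentiation of the coarea formula.
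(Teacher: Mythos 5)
The proposal takes a genuinely different route from the paper, and it has a real gap that the student has not correctly located.

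The paper's proof does \emph{not} verify a pointwise level-set inequality and feed it into Lemma \ref{comparacaoradial}. Instead, it argues variationally: in Possibility 2 ($|\{u=M\}|=0$), the key points are (Part 1) that for $t_1$ close to $M=\sup u$ the radial solution $U_{t_1}$ on a ball of measure $\mu_u(t_1)$ has $|\nabla U_{t_1}|\le\delta$, so that near the top $\tilde a$ \emph{coincides} with the power-law $\tilde C_s|{\rm z}|^{\tilde q_0-2}{\rm z}$; (Part 2) this plus the Brezis--Oswald/Belloni--Kawohl uniqueness gives that $U_{t_1}$ is the unique minimizer of the functional $I_{t_1}$; (Part 3) the integrated P\'olya--Szeg\"o then yields $\max u\le\max U_{t_1}$, and one slides up to a level $t_0\ge t_1$ where the measures match, using the maximal solutions from Theorems \ref{TeorSolucRadial}--\ref{TeorSolucRadialfnull}. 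The role of $a^*$ in Remark \ref{aStarAuxiliar} is to justify the existence and identification of the minimizer of $I_{t_1}$, not to produce a pointwise level-set inequality.

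Your proposed chain to verify the hypothesis of Lemma \ref{comparacaoradial} is circular. Running it backwards, you need (for a.e.\ $s\ge t$)
\begin{equation*}
\int_{\{\widetilde V>s\}} f(\widetilde V)\,dx \;\ge\; \int_{\{u^{\sharp}>s\}} f(u^{\sharp})\,dx.
\end{equation*}
Lemma \ref{divergencelemma}, equimeasurability, $\tilde a\cdot{\rm z}\le a\cdot{\rm z}$, and pointwise P\'olya--Szeg\"o reduce the right-hand side down the chain to $\int_{\{u>s\}}f(u)\,dx$, but there is no non-circular way to bound this by $\int_{\{\widetilde V>s\}}f(\widetilde V)\,dx$: since $f$ is nondecreasing, doing so would essentially require $\widetilde V\ge u^{\sharp}$ or at least $\mu_{\widetilde V}(s)\ge\mu_{u^{\sharp}}(s)$ for $s\ge t$, which is what you are trying to establish. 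The obstacle is therefore \emph{not} ``converting integrated P\'olya--Szeg\"o into a pointwise level-set inequality'' --- that step is standard via coarea, the isoperimetric inequality and Jensen applied to the convex, increasing radial profile of $\tilde a(t,{\rm z})\cdot{\rm z}$, and is exactly what underlies \eqref{fintegralinequality2} in Proposition \ref{tipodeprincipiodomaximogeral}; there it works because $u^{\sharp}\le U$ is \emph{hypothesized}, so \eqref{fintegralinequality} comes for free.

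Moreover the continuation framework itself starts from a false premise: it is simply not true in Possibility 2 that $V_t\ge u^{\sharp}$ on $B^*_t$ for $t$ close to $M$. Since $|\{u=M\}|=0$ and $V_t-t$ is small on small balls by \eqref{sup3}, one can have $V_t(0)<M=u^{\sharp}(0)$ for all $t<M$. The paper's Part 3 explicitly allows for $u^{\sharp}\not\le U_{t_1}$ and, rather than moving $t$ downward, moves the threshold \emph{up} to a carefully chosen $t_0\ge t_2>t_1$ produced with the maximality in Theorems \ref{TeorSolucRadial}--\ref{TeorSolucRadialfnull} and the right-continuity of $\mu_u$; Part 4 then repairs the possible mismatch between $|\{u^{\sharp}\ge t_0\}|$ and $\mu_u(t_0)$. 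The downward continuation to $t=0$ is not part of this lemma at all; it is the content of Proposition \ref{primeiracomparacao} (Assertions 1 and 2). To salvage your approach you would need to replace the initial-comparison step with the paper's minimizer-identification argument (small gradients near the top $\Rightarrow$ $\tilde q_0$-Laplacian $\Rightarrow$ uniqueness), at which point you are essentially reproducing the paper's proof.
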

\begin{proof} Let $M=\text{ess sup } u >0$, that is finite by Lemma \ref{moserLplemma}.
\\ \underline{Possibility 1}:  $|\{ u = M\}| > 0$
\\ Let $r_0$ be such that the ball $B^*=B_{r_0}(0)$ has the same measure as $\{u = M\}$.
Applying Theorem \ref{TeorSolucRadial} or Theorem \ref{TeorSolucRadialfnull} for $B'=B_{r_0}$ and $m=M$, there exists some maximal solution $U_{B'}$ for $\eqref{ut0}$ with $t_0=M$.
Then, the result follows taking $t_0=M$ and $U_{t_0}(x)=U_{B'}$.
\\ \underline{Possibility 2}:  $|\{ u = M\}| = 0$
\\ Since $f$ is locally Lipschitz and
positive in some neighborhood of $M$, there exists some
$\varepsilon_0 >0$ such that, for any $\varepsilon \le
\varepsilon_0$, the function
$$ G_{\varepsilon}(t) := \frac{f(t)}{(t-(M-\varepsilon))^{\tilde{q}_0-1}} $$ is decreasing on
$(M-\varepsilon,M+\varepsilon_0)$.
\\ {\it Part 1:} For $\varepsilon' \le \varepsilon_0$ small and $t_1 \in (M-\varepsilon',M)$, there is a solution $U_{t_1}$ to the problem \eqref{ut0} with $t_0$ replaced by $t_1$ such that  $|\{ U_{t_1} > t_1 \}| = \mu_u(t_1)$, $\sup U_{t_1} < M +\varepsilon_0$ and $|\nabla U_{t_1}| \le \delta$, where $\delta$ is given in Proposition \ref{primeiracomparacao}.

\

\noindent To prove this, observe that the definition of $M$ implies that $\mu_u(t) > 0$ for $t \in (M-\varepsilon_0,M)$. For $t_1 \in (M-\varepsilon_0,M)$, let
$r_1$ be such that the ball $B_{r_1}(0)$ satisfies $|B_{r_1}(0)|=\mu_u(t_1)$.
Using the same argument as in the Possibility 1, there exists a radial solution $U_{t_1}$ for \eqref{ut0} with $t_0$ and $B_{r_0}$ replaced by $t_1$ and $B_{r_1}$.
We have that $U_{t_1}-t_1$ is a solution of $$-{\rm div \; } \bar{a}(U,\nabla U) = \tilde{f}(U),$$
where $\bar{a}(t,{\rm z})=\tilde{a}(t+t_1,{\rm z})$ and $\tilde{f}(t)=f(t+t_1)$, that vanishes on $\partial B_{r_1}(0)$.
Note that $\bar{a}$ and $\tilde{f}$ satisfy (H1)-(H6) (the constants associated to $\tilde{f}$ are $\alpha' \in (\alpha, \tilde{C}_* \lambda_B)$ and $\beta'$ as in the proof of Theorem \ref{TeorSolucRadial}). Hence, \eqref{sup3} implies that $\sup U_{t_1}-t_1 \le C |B_{r_1}(0)|^{\sigma}$, where $C=C(n,\tilde{q},\alpha',\beta', \frac{\eta \tilde{C}_*}{\tilde{q}_0},|\Omega|) >0$, $\eta$ is associated to $a^*$ from Remark \ref{aStarAuxiliar}, and $\sigma=1/q$ if $q> n$ or $\sigma=1/n$ if $q \le n$. (Since $\eta \in (0,1)$ and $\tilde{q}_0 > 1$, any operator $\bar{a}$ satisfying $\bar{a}(t,{\rm z})\cdot {\rm z} \ge \tilde{C}_* |{\rm z}|^{\tilde{q}}$ also satisfies $\bar{a}(t,{\rm z})\cdot {\rm z} \ge \frac{\eta \tilde{C}_*}{\tilde{q}_0} |{\rm z}|^{\tilde{q}}$. Thus we can consider $C=C(n,\tilde{q},\alpha',\beta', \frac{\eta \tilde{C}_*}{\tilde{q}_0},|\Omega|) \ge C_1:=C_1(n,\tilde{q},\alpha',\beta', \tilde{C}_*,|\Omega|)$ and we can take $C$ instead $C_1$.) Therefore,
$$ \sup U_{t_1} \le C (\mu_{u}(t_1))^{\sigma} + t_1 \le C  (\mu_{u}(t_1))^{\sigma} + M. $$
For $\varepsilon_1 \le \varepsilon_0$ that will be defined later, since
$$\lim_{t \to M^-} \mu_u (t)=|\{ u = M \}| = 0,$$
we get $(\mu_u(t))^{\sigma} < \varepsilon_1/C$ for $t \in (M-\varepsilon',M)$, where $\varepsilon' \le \varepsilon_0$ is small enough.
Thus, $\sup U_{t_1} < M + \varepsilon_0$.
For $t \ge t_1$, define $r(t)$ such that $\partial B_{r(t)}(0) = \{U_{t_1} = t\}$. Then, in the case $|\nabla U_{t_1}(x)| \le 1$, (H4) and Lemma \ref{divergencelemma} imply that
\begin{align*}
n \omega_n r(t)^{n-1} \tilde{C}_s |\nabla U_{t_1}(x)|^{\tilde{q}_0} &\le  \int_{\partial B_{r(t)}} |\tilde{a}(U_{t_1},\nabla U_{t_1})|  dH^{n-1} \\
                                                &= \int_{B_{r(t)}(0)} f(U_{t_1}) dx \le  \omega_n r(t)^n  f (M+ \varepsilon_0),
\end{align*}
for $x \in \{U_{t_1} = t \}$. From this estimate and $|B_{r(t)}| \le |B_{r_1}| = \mu_u(t_1) < (\varepsilon_1/C)^{\frac{1}{\sigma}}$,
$$ |\nabla U_{t_1}(x)| \le \left( \frac{\varepsilon_1}{C \omega_n^{\sigma}} \right)^{\frac{1}{\sigma n\tilde{q}_0 }} \left( \frac{f(M+\varepsilon_0)}{n\tilde{C}_s} \right)^{\frac{1}{\tilde{q}_0}} \quad {\rm for \; } x \in B_{r_1}(0). $$
In the case $|\nabla U_{t_1}(x)| > 1$, a similar estimate holds replacing $\tilde{C}_s$ by $\tilde{C}_*$ and $\tilde{q}_0$ by $\tilde{q}$. Any way,
taking $\varepsilon_1$ small, $ |\nabla U_{t_1}(x)| \le \delta$, where $\delta$ is given in hypothesis of Proposition \ref{primeiracomparacao}.
Therefore, $U_{t_1}$ satisfies the $\tilde{q}_0$ laplacian equation
\begin{equation}
\label{q0tildelaplaceequation}
  - \tilde{C}_s \Delta_{\tilde{q}_0} U_{t_1} = f(U_{t_1}) \quad {\rm in } \; B_{r_1}.
\end{equation}
{\it Part 2:} $U_{t_1}$ is the minimizer of the functional
$$ I_{t_1} (V):= \int_{B_{r_1}}   \frac{ \nabla V  \cdot \tilde{a}(V,\nabla V)}{\tilde{q}_0} - \bar{F}(V) \; dx   $$
in the space $E= \{ V \in W^{1,\tilde{q}}(B_{r_1}) \; | \; V=t_1 \; {\rm on } \; \partial B_{r_1} \}$, where $\bar{F}(t) = \int_0^t\bar{f}(s) ds$,
$$\bar{f}(s) = \left\{ \begin{array}{cc} f(s) & {\rm if}  \; s \le M +\varepsilon_0  \\[2pt]
                                         f(M + \varepsilon_0) & {\rm if} \; s > M +\varepsilon_0.
                       \end{array}
              \right. $$
For that, consider $a^*$ with the properties stated in the Remark \ref{aStarAuxiliar}.
Therefore,
$$ I_{t_1}^*(V)  \le I_{t_1}(V) \quad {\rm for} \quad V \in E, $$
where $I_{t_1}^*$ is defined replacing $\tilde{a}$ by $a^*$ in the definition of $I_{t_1}$. From the growth conditions on $a^*$ and $\bar{f}$, we can use standarts techniques to prove that $I_{t_1}^*$ has a global minimum $U^* \in E$.
Moreover, this minimum is a solution of $$ -{\rm div } \; \hat{a}(\nabla V) = \bar{f}(V) \quad {\rm in } \quad B_{r_1}, $$
where $$\hat{a}({\rm z}):= \frac{a^*({\rm z}) + {\rm z} \cdot D a^*({\rm z})}{\tilde{q}_0}. $$
Observe that $\hat{a}({\rm z})\cdot {\rm z} \ge a^*({\rm z}) \cdot {\rm z}/\tilde{q}_0$ since $s \mapsto |a^*(s {\rm z})|$ is increasing from (H3).
Hence $\hat{a}$ and $\bar{f}$ satisfy (H1), (H5), $ \eta \tilde{C}_{*}/\tilde{q}_0 (|{\rm z}|^q -1) \le \hat{a}({\rm z}) \cdot {\rm z} $ for ${\rm z} \in \mathbb{R}^n$, $t \in \mathbb{R}$ where the important constants in order to apply \eqref{sup3} are $n$, $\tilde{q}$, $\alpha$, $\beta$, $\eta \tilde{C}_*/\tilde{q}_0$ and $|\Omega|$.
Then, as in Part 1, $\sup U^* -t_1 < C|B_{r_1}(0)|$, where $C= C(n,\tilde{q},\alpha',\beta', \frac{\eta \tilde{C}_*}{\tilde{q}_0},|\Omega|)$ is the same constant as before. (Now it is clear why we chose a constant $C$ depending on $\eta\tilde{C}_*/\tilde{q}_0$ instead of $\tilde{C}_*$ at that moment.) Thus
$ \sup U^* < M + \varepsilon_0$ and, following the same computations as before, $|\nabla U^*| < \delta$. Then, from $a^*(t,{\rm z}) = \tilde{a}(t,{\rm z}) $ for $|{\rm z}| < \delta$, it follows that
$$ I_{t_1}^*(U^*) = I_{t_1}(U^*)$$
and, therefore, $U^*$ is also a global minimizer of $I_{t_1}$. From $a^*(t,{\rm z}) = \tilde{C}_s|{\rm z}|^{\tilde{q}_0-2}{\rm z} $ for $|{\rm z}| \le \delta$, we have that $U^*$ is also a solution of \eqref{q0tildelaplaceequation}.
Hence $U_{t_1}-t_1$ and $U^*-t_1$ are solutions of $-\tilde{C}_s \Delta_{\tilde{q}_0} U = \tilde{f}(U)$. Taking $\varepsilon=M-t_1$, we have that $\tilde{f}(t)/t^{\tilde{q}_0}=G_{\varepsilon}(t+t_1)$ that is decreasing on $(M-\varepsilon, M + \varepsilon_0)$ that contains the range of $U_{t_1}-t_1$ and $U^*-t_1$. From the uniqueness result of \cite{BK}, $U_{t_1} = U^*$.

\

\noindent {\it Part 3:}  For $t_1 \in (M-\varepsilon', M)$, there exists $t_0 \ge t_1$ and a solution $U$ of \eqref{ut0} s.t. $U \ge u^{\sharp}$ in $B_{r(t_0)}:=\{u^{\sharp} > t_0 \}$, $U = u^{\sharp}$ on $\partial B_{r(t_0)}$ and $|\{ U > t_0\}| = |\{ u^{\sharp} > t_0 \} | $.

\noindent Using the properties for Schwarz symmetrization stated in Remark \ref{obs2}, the relations $\tilde{a}(t,{\rm z})\cdot {\rm z} \le a(t,{\rm z})\cdot {\rm z}$ and $\bar{F}(u^{\sharp})=F(u^{\sharp})$, and that $U_{t_1}$ minimizes $I_{t_1}$,
\begin{align*}
 \int_{\Omega_{t_1}} \frac{\nabla u \cdot a(u,\nabla u)}{\tilde{q}_0} - F (u) dx
 & \ge \int_{B_{r_1}} \frac{ \nabla u^{\sharp} \cdot a(u^{\sharp},\nabla u^{\sharp})}{\tilde{q}_0} - F (u^{\sharp}) dx \\
 & \ge \int_{B_{r_1}} \frac{ \nabla u^{\sharp} \cdot \tilde{a}(u^{\sharp},\nabla u^{\sharp})}{\tilde{q}_0} - \bar{F} (u^{\sharp}) dx \\
& \ge \int_{B_{r_1}} \frac{ \nabla U_{t_1} \cdot \tilde{a}(U_{t_1},\nabla U_{t_1})}{\tilde{q}_0} - \bar{F} (U_{t_1}) dx .\\
\end{align*}
Hence, from Lemma \ref{divergencia} and $\bar{F}(U_{t_1}) = F(U_{t_1})$, we have
$$ \int_{\Omega_{t_1}} \frac{(u-t_1)f(u)}{\tilde{q}_0} - F (u) dx \ge  \int_{B_{r_1}} \frac{ ( U_{t_1}-t_1) f( U_{t_1})}{\tilde{q}_0} - F (U_{t_1}) dx ,$$
that is equal to estimate \eqref{DesigualdadeChave}. Note also that
$$h_{t_1}(s)= \frac{(s-t_1)f(t)}{\tilde{q}_0} -F(s) $$
is decreasing in $(t_1,M+ \varepsilon_0)$ since $G_{\varepsilon}(s)$ is decreasing in this interval, where $\varepsilon=M-t_1 < \varepsilon_0$. Therefore, using that $U_{t_1}(B_{r_1}), u (B_{r_1}) \subset [t_1, M + \varepsilon_0)$
and an argument similar to the one that come after \eqref{DesigualdadeChave}, we have
$$\max u \le \max U_{t_1}.$$
If $u^{\sharp} \le U_{t_1}$ in $B_{r_1}$, Part 3 is proved taking $t_0=t_1$. Otherwise, there exist $t_2 \in (t_1,M)$ such that
$\mu_u(t_2) > \mu_{U_{t_1}}(t_2)$. Therefore $B'=\{u^{\sharp} > t_2\}$ and $B''=\{U_{t_1} > t_2 \}$ are concentric balls satisfying $|B'| > |B''|$.
Hence, from Theorem \ref{TeorSolucRadial} or \ref{TeorSolucRadialfnull}, there exists
some solution $U_{t_2}$ of \eqref{ut0} with $t_0$ replaced by $t_2$, such that $\{U_{t_2} > t_2 \} = B'$ and $U_{t_2} > U_{t_1}$ in $B''$. Since
$$ \max u^{\sharp} \le \max U_{t_1} < \max U_{t_2}, $$
it follows from the right continuity of $\mu_u$ and the continuity of $\mu_{U_{t_2}}$ that there exists $t_0 \ge t_2$,
such that $|\{U_{t_2} > t_0 \}| = |\{ u^{\sharp} > t_0 \}|$ and $U_{t_2} \ge u^{\sharp}$ in $\{ u^{\sharp} > t_0 \}$, proving this part.

\

\noindent {\it Part 4:} There exists a solution $U_{t_0}$ of \eqref{ut0} s.t. $U_{t_0} \ge u^{\sharp}$ in $B^*:=\{ u^{\sharp} \ge t_0 \}$, $U = u^{\sharp}$ on $\partial B^*$ and $|\{ U_{t_0} \ge t_0\}| = |\{ u^{\sharp} \ge t_0 \} | $.

\

\noindent Let $t_0$ and $U$ as in Part 3. If $|\{ u^{\sharp} \ge t_0\}|= \mu_u(t_0)$, then the theorem is proved with $B^*=\{ u^{\sharp} > t_0 \}$. Otherwise, applying Theorem \ref{TeorSolucRadial} or \ref{TeorSolucRadialfnull} for $B'=\{u^{\sharp} \ge t_0 \}$ and $B''=\{u^{\sharp} > t_0 \}$, there exists a solution $U_{t_0}$ of \eqref{ut0} s.t. $U_{t_0} > U$ in $B''$, proving the result with $B^*=B'$.
\end{proof}

Now we present a result that resemble a maximum principle for distribution function in the sense that the distribution $\mu_u$ of a solution cannot touch by below the distribution $\mu_U$ of
a radial solution if $\mu_u \le \mu_U$.

\begin{proposition}
Suppose that $a$, $\tilde{a}$ and $f$ satisfy $(H2)$-$(H4)$, where the constants and powers presented in $(H4)$ associated to $\tilde{a}$ are given by $(\tilde{C}_s, \tilde{C}_*, \tilde{C}^*)$ and $(\tilde{p}, \tilde{q}, \tilde{q}_0)$,
and that $\tilde{a}(t,{\rm z})\cdot {\rm z} \le a(t,{\rm z})\cdot {\rm z} $ for any $ {\rm z} \in \mathbb{R}^n$.
Assume also that $u \in W^{1,p}_0(\Omega)$ is a solution of $({\rm P}_{\Omega})$ and $U \in W^{1,p}(B)\cap C^1(B)$ is a radial solution of $(\tilde{{\rm P}}_{B})$ that not necessarily vanishes on $\partial B$.
If $u^{\sharp} \le U$ and
$u^{\sharp} \not\equiv U$, then there exists $t_1 \ge 0$ such that $u^{\sharp} < U$ in $\{ U > t_1\}$ and
$u^{\sharp} = U$ in $\{ U \le t_1\}$.

Moreover, assuming that $u^{\sharp} \le U$, if $f$ is strictly increasing and $u^{\sharp} \not\equiv U$, or $\Omega$ is not a ball and $a=a({\rm z})$ $(or \; \tilde{a}=\tilde{a}({\rm z}))$ satisfies hypotheses of Proposition \ref{continuidade}, then $u^{\sharp} < U$ in $B$.
\label{tipodeprincipiodomaximogeral}
\end{proposition}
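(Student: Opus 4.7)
The plan is to set $v = u^\sharp$ and $w = U$ and treat both as continuous radial non-increasing functions on $B$. Remark \ref{AssociacaoAlphaR} guarantees that $w$ is strictly decreasing on $(0,R]$ and solves the radial ODE classically away from the origin. The key analytic input is the basic comparison inequality
\begin{equation*}
\int_{\{v > t\}} \nabla v \cdot \tilde{a}(v, \nabla v) \, dx \;\le\; \int_{\{v > t\}} (v-t) f(v) \, dx \qquad \text{for all } t \ge 0,
\end{equation*}
which I would derive by chaining (i) Lemma \ref{divergencia} applied to $u$ on $\Omega$, (ii) the rearrangement identity $\int_{\{u>t\}} (u-t) f(u) \, dx = \int_{\{v>t\}} (v-t) f(v) \, dx$, (iii) the hypothesis $\tilde{a}(s,{\rm z}) \cdot {\rm z} \le a(s,{\rm z}) \cdot {\rm z}$, and (iv) the extended P\'olya--Szeg\"o inequality of Remark \ref{obs2} applied to the integrand $(v, \nabla v) \mapsto \tilde{a}(v, \nabla v) \cdot \nabla v$, which is convex in $\nabla v$ by (H3). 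For $w$, Lemma \ref{divergencia} applied directly yields the matching equality.

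To produce $t_1$, I would set $t_1 = \inf\{w(x) : v(x) < w(x)\}$, which immediately gives $\{v < w\} \subseteq \{w > t_1\}$. The structural claim reduces to showing that $v = w$ cannot hold at any point $x_0$ with $w(x_0) > t_1$. Assuming such an $x_0$ exists, let $r_0 = |x_0|$ and $t_0 = w(x_0)$. I would subtract the two displays above at a lower level $t' < t_0$ to obtain controlled identities on the shells $\{t' < v \le t_0\}$ and $\{t' < w \le t_0\}$. The shell version of the P\'olya--Szeg\"o inequality (also stated in Remark \ref{obs2}) combined with $v \le w$ and the monotonicity of $s \mapsto (s-t')f(s)$ forces equality throughout, and a rigidity statement in the spirit of Proposition \ref{continuidade} then upgrades the integral equality to the pointwise identity $v = w$ on the corresponding annulus. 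Taking $t' \searrow t_1$ propagates the coincidence out to the whole annulus $\{w \le t_1\}$; if $x_0$ is chosen as the innermost coincidence point, the resulting outward propagation contradicts the presence of some $y$ with $|y| > r_0$ and $v(y) < w(y)$, which is implicit in $w(x_0) > t_1$. This establishes $\{v = w\} = \{w \le t_1\}$.

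For the moreover statement I treat the two hypothesis sets separately. If $f$ is strictly increasing, then within the inner ball $B^* = \{w > t_1\}$ we have $v \le w$ with strict inequality somewhere, and the strict monotonicity of $f$ produces $\int_{B^*} (w-t_1) f(w) \, dx > \int_{B^*} (v-t_1) f(v) \, dx$. Comparing this strict gap with the equality from Lemma \ref{divergencia} for $w$ and the inequality from the basic display for $v$ forces the P\'olya--Szeg\"o step on $B^*$ to be strict, contradicting the fact that both $v$ and $w$ are radial on $B^*$ with matching measures at level $t_1$. Hence $v \equiv w$ on $B^*$ as well, which together with $v \equiv w$ on $\{w \le t_1\}$ contradicts $u^\sharp \not\equiv U$. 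If instead $\Omega$ is not a ball and $a = a({\rm z})$ (or $\tilde{a} = \tilde{a}({\rm z})$) satisfies the hypotheses of Proposition \ref{continuidade}, then the P\'olya--Szeg\"o step underlying the basic inequality is already strict on the full set $\{u > 0\}$ by Proposition \ref{continuidade}, so no level $t_1 \ge 0$ can occur and $v < w$ holds throughout $B$.

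The main obstacle I anticipate is the rigidity step that upgrades the integral equality on an annular shell to pointwise coincidence $v = w$. Since $\tilde{a}$ depends on $t$ as well as ${\rm z}$ and need not satisfy the convexity of $(\tilde{a}({\rm z}) \cdot {\rm z})^{1/p}$ required by Proposition \ref{continuidade}, the Brothers--Ziemer theorem does not apply verbatim and I expect to need a more general version for parameter-dependent convex integrands. A further technical point is to handle the case where $v$ is constant on an interval of radii (so that $\nabla v$ vanishes on a set of positive measure in some shell), which should be resolved by a careful limiting argument along a sequence $t' \searrow t_1$.
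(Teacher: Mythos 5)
Your overall strategy diverges from the paper's at the critical step, and the divergence is not just cosmetic: the rigidity argument you invoke is exactly what the paper is careful to avoid, because, as you yourself observe, Brothers--Ziemer type rigidity is not available for the parameter-dependent integrand $(t,{\rm z}) \mapsto \tilde{a}(t,{\rm z})\cdot {\rm z}$ under hypotheses (H2)--(H4) alone. So your proof has a real gap precisely where you flag the ``main obstacle,'' and the paper resolves it differently.

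The paper's argument does \emph{not} pass through an integral equality on shells followed by a rigidity upgrade. Instead it combines three things. First, it uses the level-surface version of the divergence identity, Lemma \ref{divergencelemma} (not the superlevel-set version, Lemma \ref{divergencia}, which you use), applied separately to $u$ in $\Omega$ and to $U$ in $B$. Second, it uses $u^{\sharp}\le U$ with $f$ nondecreasing, together with P\'olya--Szeg\"o and the ordering $\tilde{a}\cdot{\rm z}\le a\cdot{\rm z}$, to deduce the surface-integral inequality
\[
\int_{\{U=t\}}\frac{a(U,\nabla U)\cdot\nabla U}{|\nabla U|}\,dH^{n-1}\;\ge\;\int_{\{u^{\sharp}=t\}}\frac{a(u^{\sharp},\nabla u^{\sharp})\cdot\nabla u^{\sharp}}{|\nabla u^{\sharp}|}\,dH^{n-1}
\]
for a.e.~$t$. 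Third, and this is the step you are missing, it feeds this surface inequality into Lemma \ref{comparacaoradial}, a Gronwall-type ODE comparison lemma for the radial profiles $r_i(t)=(\mu(t)/\omega_n)^{1/n}$: starting from $r_1(t_0)<r_2(t_0)$ at one level, that lemma forces strict ordering $u^{\sharp}<U$ at all higher levels, purely through a differential inequality for the inverse profiles, with no appeal to equality cases of P\'olya--Szeg\"o. The interval structure of $\{t: r_1(t)=r_2(t)\}$, and hence the existence of $t_1$, falls out from this. This is why the hypotheses on $\tilde{a}$ can be so weak: no convexity of $(\tilde{a}\cdot{\rm z})^{1/p}$ and no independence of $t$ is required for the first conclusion.

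Beyond the rigidity gap, your shell-subtraction step is also problematic as written: subtracting the displayed inequality at level $t'$ from the one at level $t_0$ does not produce a clean identity on the shell $\{t'<v\le t_0\}$, because the factors $(v-t')$ and $(v-t_0)$ do not telescope, and the residual term $(t_0-t')\int_{\{v>t_0\}} f(v)$ cannot be matched against the corresponding $w$-term without already knowing $v=w$ on $\{w>t_0\}$ --- which is what you want to prove. Your ``moreover'' argument for strictly increasing $f$ is also muddled: you claim a strict P\'olya--Szeg\"o step produces a contradiction, but strictness there is not inherently contradictory; what actually produces the contradiction in the paper is that $u^{\sharp}=U$ on $\{U<t_1\}$ forces the surface integrals to agree at every level below $t_1$, while the strict monotonicity of $f$ and $u^{\sharp}<U$ on $\{U>t_1\}$ make the chain of inequalities strict, a direct self-contradiction without any rigidity appeal.

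In short: the key lemma you need and do not have is Lemma \ref{comparacaoradial}. Once you replace the rigidity-and-propagation scheme by that ODE comparison, the rest of your architecture (definition of $t_1$, use of Lemma \ref{divergencelemma}, P\'olya--Szeg\"o on superlevel sets, and the dichotomy for the ``moreover'' part) lines up with the paper's.
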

\begin{proof}
Since $U \ge u^{\sharp}$ and $f$ is nondecreasing, we
have \begin{equation} \int_{\{U > t \}} f(U) \; dx \ge \int_{\{u^{\sharp} > t
\}} f(u^{\sharp}) \; dx = \int_{\{u > t \}} f(u) \; dx, \label{fintegralinequality}
\end{equation} for any $t \ge 0$.  Hence, applying Lemma \ref{divergencelemma} for $u$ and
$U$ and P\'olya-Szeg\"o principle, we get
\begin{equation}
\label{fintegralinequality2}
\int_{\{ U = t \}}
\frac{\tilde{a}(U, \nabla U) \cdot \nabla U}{|\nabla U|} \; dH^{n-1}
\ge \int_{\{ u^{\sharp} = t \}} \frac{a(u^{\sharp}, \nabla u^{\sharp}) \cdot \nabla
u^{\sharp}}{|\nabla u^{\sharp}|} \; dH^{n-1}
\end{equation} for almost every $t \ge \inf U$. Since $\tilde{a}(t,{\rm z})\cdot {\rm z} \le a(t,{\rm z})\cdot {\rm z} $, we have the same inequality with $a$ or $\tilde{a}$ appearing in both sides. Letting $r_1= (\mu_{u}(t)/\omega_n)^{1/n}$ and $r_2= (\mu_{U}(t)/\omega_n)^{1/n}$
we have some $t_0$ such that $r_1(t_0) < r_2(t_0)$ since $u^{\sharp} \not\equiv U$. Hence, Lemma \ref{comparacaoradial} implies that $u^{\sharp} < U$ on $\{U > t_0\}$.
Indeed, we can infer that the set of $t's$, for which $r_1(t)=r_2(t)$, is an interval that contains $0$. Denoting the supremum of this set by $t_1$, we have the first part of the result.

Now consider the case $f$ is strictly increasing and $t_1 >0$. Then we have a strict inequality in \eqref{fintegralinequality}
and, therefore, in \eqref{fintegralinequality2} for any $t \in [0,t_1]$, that contradicts $u^{\sharp} = U$ in  $\{ 0 \le U < t_1 \}$.

If  $a$ is as stated in Proposition \ref{continuidade}, it follows
from  $\tilde{a}({\rm z})\cdot {\rm z} \le a({\rm z})\cdot {\rm z}
$, \eqref{fintegralinequality}, Lemma \ref{divergencelemma},  and
P\'olya-Szeg\"o principle that
$$ \int_{  U < t } \nabla U \cdot  a(\nabla U )  dx  \ge \int_{  u < t } \nabla u \cdot a( \nabla u )  dx  \ge \int_{
u^{\sharp} < t } \nabla u^{\sharp} \cdot a( \nabla u^{\sharp} )  dx, $$
for $t < t_1$. Since $u^{\sharp}=U$ in $\{ U < t_1\}$, the three integrals are equals for $t < t_1$, and therefore, Proposition \ref{continuidade} implies that $u^{\sharp}$ is a translation of $u$ in $\{ u < t_1 \}$ and $\Omega$ is a ball,
that is an absurd.
Replacing $a$ by $\tilde{a}$, we see that the same conclusion holds if $\tilde{a}$ satisfies the hypotheses of that proposition.
\end{proof}

\begin{proof}{\it of Proposition \ref{primeiracomparacao}} Observe that $\tilde{a}$ and $f$ satisfy (H1)-(H5). Furthermore $\tilde{a}$ also satisfy $(H6)$, since $|\tilde{a}(t,{\rm z})| = \tilde{C}_s |{\rm z}|^{\tilde{q}_0 -1}$ for ${\rm z}$ small.
Then let $U_B$ be the solution stated in
Theorem \ref{TeorSolucRadial} or in Theorem \ref{TeorSolucRadialfnull} for $m=0$.  Consider the set
$$A= \{ t_0  : \exists {\rm \, a \, radial \, sol.} \;
U_{t_0} \; {\rm of} \; (\ref{ut0}) \, {\rm s.t.} \, U_{t_0} \ge
u^{\sharp} \, {\rm in} \, B^* \, {\rm and } \, |B^*| =
|\{ u^{\sharp} \ge t_0 \} | \}.$$ According to the previous lemma this set is
not empty. To prove the theorem, it suffices to show that $0
\in A$. For that  we prove the following assertions.

\

\noindent {\sl \underline{Assertion 1}:} For any positive $t_1 \in A$, there exists $t' \in A$ such that $t' < t_1$.
\\ From the definition of $A$, there exists a radial solution $U_{t_1}$ of
(\ref{ut0}) greater than or equal to $u^{\sharp}$ in $\{ u^{\sharp} \ge t_1 \}$. Since $U_{t_1}$ is radial, it can be extended as a positive radial solution of $-{\rm div} (\tilde{a}(V, \nabla V ))= f(V)$
in some ball that contains $\{ u^{\sharp} \ge t_1 \}$ or in $\mathbb{R}^n$. The maximal extension will be denoted by $U_{t_1}$. Consider
$$ D= \{ t \ge 0 : \; |\{U_{t_1} > t\}| = | \{ u \ge t \} | \; {\rm
and} \; |\{U_{t_1} > s\}| \ge | \Omega_{s}| \; {\rm for } \; s > t
\},$$ and let $t_2 =\inf D$. Observe that $t_1 \in D$ and so $t_2 \le t_1$.
If $t_2 < t_1$, then there exists $t_3 \in [t_2,t_1) \cap D$. Hence, in this case, our assertion is proved taking $t'=t_3$.
Consider now the case $t_2=t_1$. Thus $0 \not\in D$, since $0 <
t_1=t_2$. Therefore, there are two possibilities:

\

\noindent  1) $|\{
U_{t_1} > 0 \}| > |\Omega |$ and $|\{U_{t_1} > s\}| \ge |
\Omega_{s}|$ for $s > 0$; \\ 2) $|\{U_{t_1} > s_0\}| < |
\Omega_{s_0}|$ for some $s_0 \ge 0$.

\

\noindent Case 1): since $|\{U_{t_1} > s\}| \ge \mu_u(s) $ for $s > 0$, $U_{t_1} \ge u^{\sharp}$. Then, from the first part of Proposition \ref{tipodeprincipiodomaximogeral},
$U_{t_1} = u^{\sharp}$ in $\{ U_{t_1} < t_2 \}$, since $U_{t_1}= u^{\sharp}$ in $\{ U_{t_1}= t_2 \}$. However, this contradicts $|\{
U_{t_1} > 0 \}| > |\Omega |$ and, so this case is not possible.

\

\noindent Case 2): from the definition of $t_1$, it follows that $s_0 <
t_1$. Let $B'_{s_0}(0)$ be a ball such that $|B'_{s_0}| = |\{ u \ge s_0 \}|$.
Hence $B'=B'_{s_0}(0)$ and $B''= \{U_{t_1} > s_0\}$ satisfy $|B'| > |B''|$ and, from Theorem \ref{TeorSolucRadial} or \ref{TeorSolucRadialfnull}, there exists a
solution $U_{s_0}$ of $({\rm \tilde{P}}_{B'})$ with $U_{s_0}= s_0$ on $\partial B'$, such that $U_{s_0} > U_{t_1}$ in $B''$.
Then $U_{s_0} > U_{t_1} \ge u^{\sharp}$ in $\{U_{t_1} > t_1\}$ and, therefore,
$$\mu_{U_{s_0}}(t_1) = |\{U_{s_0} > t_1 \}| > |\{U_{t_1} > t_1\}|=|\{ u \ge t_1 \}|=\mu_u(t_1^-).$$
Since $\mu_{U_{s_0}}$ is continuous and $\mu_u(t_1^-) =  \lim_{t \to t_1^-} \mu_u(t)$, we have $\mu_{U_{s_0}}(t) > \mu_u(t)$ for $s_0 < t < t_1$, sufficiently close to $t_1$. Defining
$$ t' = \inf \{ t \ge s_0 \; : \;  \mu_{U_{s_0}}(t) > \mu_u(t) \},$$
it follows that $s_0 \le t' < t_1$ and $\mu_u(t') \le \mu_{U_{s_0}}(t') \le \mu_u(t'^-)$. Observe also that $U_{t_1} > u^{\sharp}$ in $\{ u^{\sharp} > t' \}$. Hence, this assertion is proved if $\mu_{U_{s_0}}(t')=\mu_u(t'^-)$.
If $\mu_{U_{s_0}}(t') < \mu_u(t'^-)$, applying Theorem \ref{TeorSolucRadial} or Theorem \ref{TeorSolucRadialfnull} for the balls $\{ U_{s_0} > t' \} \subsetneq \{ u^{\sharp} \ge t' \}$, we get a solution $U_{t'}$ s.t.
$U_{t'} > U_{s_0}$ in $\{ U_{s_0} > t' \}$ and $|\{ U_{t'} > t' \}| = |\{ u^{\sharp} \ge t'\}|$. Then $U_{t'} > u^{\sharp}$ in $\{ u^{\sharp} \ge t' \}$ and $U_{t'} = u^{\sharp} $ on $\partial \{ u^{\sharp} \ge t' \}$,
completing Assertion 1.

\

\noindent {\sl \underline{Assertion 2}:} If $t_1=\inf A$, then $t_1 \in A$.
\\ We can prove this using the same limit argument as in Lemma \ref{ContinuidadePsi}.

\noindent These assertions imply that $\inf A = 0$. Then there is a solution $U_0$ of $({\rm \tilde{P}}_{B})$
such that $U_0 \ge u^{\sharp}$. Since $U_B$ is maximal, it follows that $U_0 \le U_B$, proving the result.
\end{proof}

\begin{theorem} Let $\Omega \subset \mathbb{R}^n$ be a bounded
open set, $B$ be a ball centered at the origin with
$|B|=|\Omega|$, and suppose that $a$, $\tilde{a}$ and $f$ satisfy the hypotheses $(H1)$-$(H5)$, where the constants and powers associated to $a$ and $\tilde{a}$ may be different.
If $\tilde{a}(t,{\rm z})\cdot {\rm z} \le a(t,{\rm z})\cdot {\rm z} $ for any $ {\rm z} \in \mathbb{R}^n$, then there exists a radial solution $U_B \in W_0^{1,p}(B)$ of $({\rm \tilde{P}}_{B})$ such
that $$ U_B \ge u^{\sharp} \quad in \quad  B,$$ where $u^{\sharp}$
is the symmetrization of any solution $u$ of $($\ref{eq1}$)$.

Furthermore, if $\Omega$ is not a ball and $a=a({\rm z})$ $(or \; \tilde{a}=\tilde{a}({\rm z}))$ is as stated in Proposition \ref{continuidade}, then
$U_B > u^{\sharp}$.
\label{finaltheorem}
\end{theorem}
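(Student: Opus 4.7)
The plan is to deduce this from Proposition \ref{primeiracomparacao} by an approximation argument. The gap between the hypotheses is exactly the requirement that $\tilde{a}(t,{\rm z})\cdot{\rm z}=\tilde{C}_s|{\rm z}|^{\tilde{q}_0}$ on $|{\rm z}|<\delta$, so I would first construct a sequence $\tilde{a}_k(t,{\rm z})=e_k(t,|{\rm z}|){\rm z}$ ($k\in\mathbb{N}$) satisfying (H3)--(H5) with constants uniformly bounded in $k$, exhibiting a pure power profile $\tilde{a}_k(t,{\rm z})\cdot{\rm z}=\tilde{C}_{s,k}|{\rm z}|^{\tilde{q}_0}$ on $|{\rm z}|<\delta_k$ for some $\delta_k\downarrow 0$, satisfying the domination $\tilde{a}_k(t,{\rm z})\cdot{\rm z}\le\tilde{a}(t,{\rm z})\cdot{\rm z}\le a(t,{\rm z})\cdot{\rm z}$, and converging to $\tilde{a}$ locally in $C^1$ on $\mathbb{R}\times(\mathbb{R}^n\setminus\{0\})$. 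Since the lower bound in (H4) gives $\tilde{a}(t,{\rm z})\cdot{\rm z}\ge\tilde{C}_s|{\rm z}|^{\tilde{q}_0}$ for $|{\rm z}|\le 1$, this is achievable by choosing $\tilde{C}_{s,k}<\tilde{C}_s$ and gluing the pure power to the original scalar profile $s\mapsto s\,e(t,s)$ via a convex interpolation in the spirit of Remark \ref{aStarAuxiliar}. The verification that the glued $\tilde{a}_k(t,{\rm z})\cdot{\rm z}$ is convex in ${\rm z}$ and that $s\mapsto|\tilde{a}_k(t,s{\rm z})|$ remains increasing is where most of the technical care sits; I expect this to be the main obstacle.

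Once this family is available, Proposition \ref{primeiracomparacao} applied to each pair $(a,\tilde{a}_k)$ produces a radial solution $U_{B,k}\in W^{1,p}_0(B)$ of $-{\rm div}\,\tilde{a}_k(V,\nabla V)=f(V)$ in $B$ with $U_{B,k}\ge u^\sharp$ in $\Omega^\sharp$. The uniformity of the (H4) constants, combined with the a priori $L^\infty$ bound \eqref{sup3} from Remark \ref{classicalsol}, yields $\|U_{B,k}\|_\infty\le M$ independent of $k$. Writing $U_{B,k}(x)=w_k(|x|)$, the radial identity \eqref{bradialrelation} used in Remark \ref{AssociacaoAlphaR} forces $|w_k'|$ to be uniformly bounded and $w_k'$ to be equicontinuous on every compact subset of $(0,R]$ (with $R$ the radius of $B$), while $w_k'(r)\to 0$ as $r\to 0$ uniformly in $k$.

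By Arzel\`a-Ascoli I then extract a subsequence with $U_{B,k}\to U_B$ in $C^1_{\mathrm{loc}}(\overline{B}\setminus\{0\})$ and uniformly on $B$. Since $\tilde{a}_k\to\tilde{a}$ locally uniformly away from zero and $|\nabla U_{B,k}|$ stays in a compact subset of $(0,\infty)$ on every spherical ring (again by \eqref{bradialrelation}), the limit $U_B$ is a radial weak solution of $({\rm\tilde{P}}_B)$ in $W^{1,p}_0(B)$, and the inequality $U_{B,k}\ge u^\sharp$ passes to the limit to give $U_B\ge u^\sharp$ in $B$, which is the first assertion.

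For the strict inequality, assuming $\Omega$ is not a ball and that either $a=a({\rm z})$ or $\tilde{a}=\tilde{a}({\rm z})$ meets the hypotheses of Proposition \ref{continuidade}, I would invoke the second part of Proposition \ref{tipodeprincipiodomaximogeral}: it directly converts $u^\sharp\le U_B$ with $u^\sharp\not\equiv U_B$ into $u^\sharp<U_B$ on $B$, while the alternative $u^\sharp\equiv U_B$ is ruled out because it would force equality in P\'olya-Szeg\"o and hence, through Proposition \ref{continuidade}, that $\Omega$ is a ball. So the only real work is the approximation step in the first paragraph; after that, the existing propositions combined with the regularity theory of Section \ref{RC} deliver the theorem.
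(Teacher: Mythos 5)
Your proposal is correct and follows the paper's own strategy step for step: construct approximating operators $\tilde{a}_k$ with the pure-power profile $C|{\rm z}|^{\tilde{q}_0}$ on $|{\rm z}|<\delta_k=1/k$, dominated by $\tilde{a}$, agreeing with $\tilde{a}$ for $|{\rm z}|\ge 2/k$ (which in particular ensures the $(H4)$/$(H5)$ constants are unchanged), apply Proposition~\ref{primeiracomparacao} to each pair $(a,\tilde{a}_k)$, extract a $C^1$-convergent subsequence of the resulting maximal radial solutions using \eqref{sup3} and the equicontinuity estimates from Part~1 of Lemma~\ref{SolucaoNoPico}, and pass to the limit. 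The strict inequality in the second assertion likewise comes from Proposition~\ref{tipodeprincipiodomaximogeral}; your extra case analysis for $u^{\sharp}\equiv U_B$ versus $u^{\sharp}\not\equiv U_B$ makes explicit a step the paper leaves implicit, but both resolve via the Brothers--Ziemer equality case in Proposition~\ref{continuidade}.
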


\begin{proof}
For $k \in \mathbb{N}$, let $a_k (t,{\rm z})=  b_k(t, |{\rm z}|) {\rm z}/ |{\rm z}|$ be a function satisfying (H3) s.t.
\\ $\bullet$ $|a_k| \le |\tilde{a}|$,
\\ $\bullet$ $a_k(t,{\rm z}) \cdot {\rm z} = C|{\rm z}|^{\tilde{q}_0}$ for some $C > 0$ and $|{\rm z}| \le 1/k$,
\\ $\bullet$ $a_k(t,{\rm z}) \cdot {\rm z} = \tilde{a}(t,{\rm z}) \cdot {\rm z}$ for $|{\rm z}| \ge 2/k$.
\\ To obtain such $a_k$, first observe that the convexity of $\tilde{a}(t,{\rm z}) \cdot {\rm z}$ in ${\rm z}$ and the relation $\tilde{a}(t,{\rm z}) \cdot {\rm z} \ge \tilde{C}_s |{\rm z}|^{\tilde{q}_0}$
imply that the derivative of $s \mapsto \tilde{a}(t,s{\rm w}) \cdot s{\rm w}$ is uniformly bounded from below by some $D_k>0$ for $t \in \mathbb{R}$, $|{\rm w}|=1$ and $s = 1/k$. From $\tilde{a} (t,{\rm z})=  \tilde{b}(t, |{\rm z}|) {\rm z}/ |{\rm z}|$, we get $\partial_s  [\tilde{b}(t,s) \, s ] \ge D_k$ for $s=1/k$ and $t \in \mathbb{R}$. Since $\tilde{a}(t,{\rm z}) \cdot {\rm z}$ in ${\rm z}$ is convex, $\partial_s  [\tilde{b}(t,s) \, s] $ is increasing in $s$ and,
then $\partial_s  \tilde{b}(t,s) \, s \ge D_k$ for $s=2/k$. Now define $b_k(t,s)$ in $\mathbb{R} \times [0,1/k]$ by $b_k(t,s)= C_k |s|^{\tilde{q}_0-1}$, where $C_k$ is such that $\partial_s[b_k(t,s) \, s]=D_k/2$ for $s=1/k$.
(Indeed we can chose $D_k = \tilde{C}_s (1/k)^{\tilde{q}_0-1}$ and $C=C_k= \tilde{C}_s/(2\tilde{q}_0)$.) Hence it is possible to extend $b_k$ to $\mathbb{R} \times [0,+\infty)$ in such a way that $\partial_s  [b_k(t,s) \, s] $ is strictly increasing in $s$, continuous and $b_k(t,s)=\tilde{b}(t,s)$ for $s\ge 2/k$. The function $a_k$ defined
from $b_k$ satisfies the required properties.

\

Since $a$, $a_k$ and $f$ satisfy the hypotheses of Proposition \ref{primeiracomparacao}, there exists some radial solution $U_k \in W_0^{1,p}(B)$ of $-{\rm div } \, a_k(V,\nabla V) = f(V)$ in $B$ that satisfies
$U_k \ge u^{\sharp}$, for any solution $u$ of $({\rm P}_{\Omega})$. Using \eqref{sup3}, it follows that the sequence $(U_k)$ is bounded in the $L^{\infty}$ norm and,
following the same argument as in Part 1 of Lemma \ref{SolucaoNoPico}, the derivative of $U_k$ is also uniformly bounded and equicontinuous. Hence, some subsequence converges to some function $U_B$ that is a
weak solution of $({\rm \tilde{P}}_B)$, by standart arguments. Moreover, $U_k \ge u^{\sharp}$ implies that $U_B \ge u^{\sharp}$, for any solution $u$ of $({\rm P}_{\Omega})$, completing the first part of the theorem.

\

Suppose now that $\Omega$ is not a ball and $u$ is a solution of $({\rm P}_{\Omega})$. From the first part, $U_B \ge u^{\sharp}$ and, therefore, applying Proposition \ref{tipodeprincipiodomaximogeral}, $U_B > u^{\sharp}$.
\end{proof}

\section{Existence and bound result}
\label{EBR}

First we apply the results of the previous section to prove that the symmetrization of solutions of \eqref{appequation} are bounded by a radial solution.
Notice that if $h$ is also bounded from above, the proof follows immediately from Theorem \ref{finaltheorem} applied to the equation $-{\rm div}( h(v) a(\nabla v) ) = f(v)$.
For $h$ just bounded from below by some positive constant, proceed as follows: let $m=\inf h$, $a_0(t,{\rm z})=m \, a({\rm z})$ and $a_1(t,{\rm z})=h(t) a({\rm z})$.
Since $a_0(t,{\rm z}) \cdot {\rm z} \le a_1(t,{\rm z}) \cdot {\rm z}$ and $a_0$ fulfill all necessary assumptions, Theorem \ref{finaltheorem} implies that there exists a solution $U_0$ for
$$ -m \, {\rm div} (a(\nabla V)) = f(V) \quad {\rm in } \quad B, $$
such that $U_0 \ge u^{\sharp}$, where $u$ is any solution of \eqref{appequation}. Let $M= \max U_0$, $h_1$ be a $C^1$ function such that $h_1(t)=h(t)$ for $t \le M$ and $h_1(t)=h(M+1)$ for $t \ge M+1$, and $a_2(t,{\rm z})=h_1(t)a({\rm z})$. Observe that $u$ is solution of $-{\rm div } (a_2(v,\nabla v)) = f(v)$ and $a_2$ satisfies
(H1)-(H5). Hence from Theorem \ref{finaltheorem}, there exists a radial solution $U_B$ of $-{\rm div } (a_2(V,\nabla V)) = f(V)$ in $B$ such that $U_B \ge u^{\sharp}$. Moreover $U_B \le U_0$, since $a_2 \ge a_0$.
Therefore $U_B$ is also a solution of$-{\rm div}( h(V) a(\nabla V) ) = f(V)$ completing the proof. This can be summarized in the next proposition.

\begin{proposition}
If $a_1=h a $ and $f=gh$ satisfy $(H1)$-$(H5)$, then there exists a radial function $U_B$, solution of \eqref{appequation} when the domain is $B$, such that $U_B \ge u^{\sharp}$, where $u^{\sharp}$ is the symmetrization of any solution of \eqref{appequation}. This is also true if $a_1$ does not satisfy the right inequality of \eqref{alqestimate}.
\end{proposition}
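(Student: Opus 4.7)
The plan is to exploit Theorem \ref{finaltheorem} twice, using the fact that $h$ is bounded below to first get a rough solution that gives us an a priori $L^{\infty}$ bound, and then refining with a truncated version of $h$ that is bounded above. The obstruction is that $a_1(t,{\rm z}) = h(t) a({\rm z})$ need not satisfy the upper bound in (H4) because $h$ may grow too fast; we bypass this by only ever applying Theorem \ref{finaltheorem} to operators whose $t$-dependence is either constant or truncated.

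First I would set $m = \inf h > 0$ and let $a_0(t,{\rm z}) := m\,a({\rm z})$. Since $a_0(t,{\rm z})\cdot{\rm z} \le a_1(t,{\rm z})\cdot{\rm z}$ for all ${\rm z}$, and $a_0$ manifestly satisfies (H3)--(H4) (inheriting the bounds from $a$ with constants rescaled by $m$), Theorem \ref{finaltheorem} applied with $\tilde a = a_0$ yields a radial solution $U_0 \in W_0^{1,p}(B)$ of $-m\,{\rm div}(a(\nabla V)) = f(V)$ in $B$ satisfying $U_0 \ge u^{\sharp}$ for every solution $u$ of \eqref{appequation}. This gives the crucial a priori bound $\max u \le \max U_0 =: M$.

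Next I would truncate: pick $h_1 \in C^1$ with $h_1(t) = h(t)$ for $t \le M$ and $h_1(t) = h(M+1)$ for $t \ge M+1$, and set $a_2(t,{\rm z}) := h_1(t)\,a({\rm z})$. The key point is that $h_1$ is now bounded above and below by positive constants, so $a_2$ satisfies the full (H1)--(H5). Since any solution $u$ of \eqref{appequation} satisfies $u \le u^{\sharp}(0) \le M$, the values of $h_1$ and $h$ agree on the range of $u$, so $u$ is also a weak solution of $-{\rm div}(a_2(v,\nabla v)) = f(v)$. A second application of Theorem \ref{finaltheorem}, with $\tilde a = a_2$, produces a radial $U_B$ solving $-{\rm div}(a_2(V,\nabla V)) = f(V)$ in $B$ with $U_B \ge u^{\sharp}$.

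To conclude, I need to close the loop by showing $U_B$ is actually a solution of the original equation \eqref{appequation}, i.e.\ that $\max U_B \le M$ so that $h_1(U_B) = h(U_B)$. This is the main technical point. Since $a_2(t,{\rm z})\cdot{\rm z} \ge a_0(t,{\rm z})\cdot{\rm z}$ (because $h_1 \ge m$), Theorem \ref{finaltheorem} applied once more, now comparing the two radial solutions via the distribution-function argument of Section \ref{MR}, gives $U_B \le U_0$ pointwise; hence $\max U_B \le M$ and $U_B$ solves \eqref{appequation} on $B$. Combining with $U_B \ge u^{\sharp}$ yields the desired comparison, and the argument used $h$ only through $m = \inf h > 0$, so the upper inequality in \eqref{alqestimate} for $a_1$ is never invoked.
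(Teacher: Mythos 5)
Your proposal is correct and follows essentially the same route as the paper: define $a_0 = m\,a$ with $m = \inf h$, apply Theorem~\ref{finaltheorem} to get the a priori bound $M = \max U_0$, truncate $h$ to $h_1$ bounded above, set $a_2 = h_1 a$, apply Theorem~\ref{finaltheorem} again to obtain $U_B$, and close by noting $a_2 \ge a_0$ forces $U_B \le U_0$ so $h_1(U_B) = h(U_B)$. The only cosmetic difference is that you spell out the final comparison $U_B \le U_0$ as a third invocation of the distribution-function argument, whereas the paper leaves it as an immediate consequence of $a_2 \ge a_0$ and the maximality of $U_0$.
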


This result gives a priori estimate of a solution $u$, but does not prove its existence, except for the ball where we obtain the function $U_B$. We show now an existence result for a particular case, using this estimates.

\begin{theorem}
Let $a({\rm z})= {\rm z}|{\rm z}|^{p-2}$ and suppose that $a_1=h a $ and $f=gh$ satisfy $(H1)$-$(H5)$, with the possibility of not fulfillment of the right inequality of \eqref{alqestimate}.  Then there exists a solution $u$ to the problem \eqref{appequation}.
\end{theorem}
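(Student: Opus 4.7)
The strategy is to solve an auxiliary equation obtained by truncating $h$ above the a priori bound and then transferring the solution back to \eqref{appequation}. First I would invoke the preceding proposition to obtain a universal $L^\infty$ bound $M$ on every solution of \eqref{appequation} and of the truncated analogue below. Next, define $h_1\in C^1$ with $h_1\equiv h$ on $[0,M+1]$, $h_1\equiv h(M+1)$ on $[M+2,\infty)$ and $m\le h_1\le\bar M$; the truncated problem $(*)$: $-{\rm div}(h_1(u)|\nabla u|^{p-2}\nabla u)=g(u)h_1(u)$ in $\Omega$, $u=0$ on $\partial\Omega$, then satisfies both inequalities of \eqref{alqestimate} together with (H1)--(H5).

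The equation $(*)$ is not directly variational, because the natural functional $\int h_1(u)|\nabla u|^p/p\,dx$ produces an unwanted $h_1'(u)|\nabla u|^p/p$ term in its Euler--Lagrange equation. I would remove this defect by the change of variable $v=\phi(u)$ with $\phi(t)=\int_0^t h_1(s)^{1/(p-1)}\,ds$: since $m^{1/(p-1)}\le\phi'\le\bar M^{1/(p-1)}$, $\phi$ is a bilipschitz $C^1$ diffeomorphism of $\mathbb R$, and a direct calculation gives $|\nabla v|^{p-2}\nabla v=h_1(u)|\nabla u|^{p-2}\nabla u$. Hence $(*)$ transforms to $-\Delta_p v=F(v)$ with $F(v)=g(\phi^{-1}(v))h_1(\phi^{-1}(v))$ and zero boundary data, which is amenable to the direct method: minimize
$$J(v)=\int_\Omega\frac{|\nabla v|^p}{p}\,dx-\int_\Omega G(v)\,dx,\qquad G(v)=\int_0^v F(s)\,ds,$$
on $W_0^{1,p}(\Omega)$. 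Substituting $s=\phi(t)$ yields $G(v)=\int_0^{\phi^{-1}(v)} f(t)\,h_1(t)^{1/(p-1)}\,dt$, which by (H5) is bounded by a multiple of $|v|^p$ plus a linear term; if the bilipschitz distortion $\bar M/m$ inflates the $|v|^p$ coefficient beyond $\lambda_B/p$, I would apply a secondary truncation of $g$ above level $M+1$, making $F$ bounded and $G$ of at most linear growth and thereby restoring coercivity. A minimizer $v$ exists by weak lower semicontinuity, and $u:=\phi^{-1}(v)\in W_0^{1,p}(\Omega)$ is then a weak solution of $(*)$.

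Finally, the a priori estimate of the preceding proposition, applied to $(*)$ (and to the doubly-truncated version if needed), yields $\|u\|_\infty\le M$, which deactivates both truncations and shows that $u$ solves \eqref{appequation}. The main obstacle is coercivity of $J$: the change of variable rescales the sublinear growth constants from (H5) by a factor involving $\bar M/m$ that can push the coefficient of $|v|^p$ in $G$ above the threshold $\lambda_B/p$, so the supplementary truncation of $g$ — together with the verification that (H1)--(H5) and hence the previous proposition's a priori bound remain valid after that truncation — is the delicate step that closes the argument.
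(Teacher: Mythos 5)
Your strategy coincides with the paper's: truncate $h$ to a bounded $h_1$ above the a priori bound $M$ coming from the preceding proposition, render the truncated problem variational via the substitution $v=\phi(u)$ with $\phi'=h_1^{1/(p-1)}$, minimize the resulting $p$-Dirichlet functional, and then use the a priori bound to deactivate the truncation. The paper's functional $J(v)=\int_\Omega (h_1(v))^{p/(p-1)}|\nabla v|^p/p-\int_0^v f(s)h_1(s)^{1/(p-1)}\,ds\,dx$ is exactly your $J$ written in the original variable, so the two proofs are the same modulo this change of coordinates.

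The one place where you diverge --- and where you flag an open gap --- is the coercivity worry and the proposed secondary truncation of $g$. That complication is an artifact of two choices you make, not a genuine obstacle. First, you put $g\,h_1$ on the right-hand side of the truncated problem, whereas the paper keeps $f=gh$ there; this is what introduces the stray factor $\bar M/m$. Second, even with $G(v)=\int_0^{\phi^{-1}(v)}f(t)h_1(t)^{1/(p-1)}\,dt$, the estimate that multiplies by $\bar M^{1/(p-1)}$ and separately replaces $\phi^{-1}(v)$ by $v/m^{1/(p-1)}$ is wasteful. The sharp route is to undo the substitution: $G(v)=\int_0^v f(\phi^{-1}(s))\,ds$, and since $\phi^{-1}(s)\le s/m^{1/(p-1)}$ and $(H5)$ gives $f(t)\le\alpha t^{p-1}+\beta$ (after absorbing the case $q<p$ into a larger $\beta$), one gets
$$G(v)\le\frac{\alpha}{mp}\,v^p+\beta\,v,$$
with no $\bar M$ anywhere. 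Since $(H5)$ holds for $a_1=ha$ with $C_*=m$, we have $\alpha<m\lambda_B$, hence $\alpha/m<\lambda_B$, and the Poincar\'e inequality yields coercivity of $J$ on $W_0^{1,p}(\Omega)$ directly. Thus the secondary truncation of $g$, and the attendant re-verification of $(H1)$--$(H5)$ for a doubly truncated problem, are unnecessary; keeping $f=gh$ on the right-hand side (as the paper does) closes the argument cleanly with the a priori bound $\|u\|_\infty\le M$ supplied by the preceding proposition.
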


\begin{proof}
Let $M$, $h_1$ and $U_B$ be as defined before. Define the functional
$$ J(v) = \int_{\Omega} (h_1(v))^{\frac{p}{p-1}}\frac{|\nabla v|^p}{p} - \int_0^v f(s) (h_1(s))^{\frac{1}{p-1}} \, ds \; dx .$$
Since $h_1$ is bounded from above and from bellow by some positive constants, conditions (H4) and (H5) holds with $q=q_0=p$. Then
we can minimize $J$ in $W^{1,p}(\Omega)$ and obtain a solution $u$ to $-{\rm div} (h_1(v) v|v|^{p-2}) = f(v)$. From the previous result, we have that
$u$ is bounded by $U_B$ and, therefore, is a solution that we are looking for.
\end{proof}

\section{Estimates for Eigenfunctions}
\label{EP}

In the next result, the estimate \eqref{lpestimates1} and \eqref{eigenvalueestimate00} were established in \cite{C2} and \cite{C3} for $p=q=2$, with the best constant, and extended in \cite{AFT} for  $p=q>1$,
 when $\lambda$ is the first eigenvalue.

\begin{theorem}
Let $\Omega \subset \mathbb{R}^n$ be an open bounded set and $w$ be a solution of
\begin{equation} \left\{
\begin{array}{rcll}
- \Delta_p v & = & \lambda v|v|^{q-2}& \; \text{in } \; \; \Omega \\[5pt]
  v & = & 0 & \; \text{on } \; \; \partial \Omega \\ \end{array}
\right.   \label{autov1}
\end{equation}
where $1 < q \le p$ and $\lambda$ is either a real number if $q < p$ or any eigenvalue of $-\Delta_p$ with
trivial boundary data if $q=p$. Then
\begin{equation}
\label{lpestimates1}
(\max |w|)^{1+\frac{n(p-q)}{rp}}  \le \frac{2}{(\omega_n)^{1/r}} \left( \frac{2(p-1)}{p}\right)^{\frac{n(p-1)}{rp}} \left(\frac{\lambda}{n}\right)^{n/rp} \| w\|_r,
\end{equation}
for any $r >0$. Furthermore,
\begin{equation}
\label{lowerdistributionestimate}
 |\tilde{\Omega}_t| \ge \omega_n (\|w\|_{\infty}-t)^{\frac{n(p-1)}{p}} \left( \frac{p}{p-1}\right)^{\frac{n(p-1)}{p}} \left( \frac{n}{\lambda}\right)^{n/p} \|w\|_{\infty}^{\frac{n(1-q)}{p}},
\end{equation}
where $\tilde{\Omega}_t = \{ |w| > t \}$, $t \in [0,\max |w|]$.
\end{theorem}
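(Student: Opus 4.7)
The plan is to derive \eqref{lowerdistributionestimate} first, as a consequence of Lemma \ref{divergencelemma}, the isoperimetric inequality, and a Hölder estimate on the level sets, and then to obtain \eqref{lpestimates1} from it by integrating $\mu(t)$ against $r t^{r-1}$ over $[0,M/2]$, where $M=\|w\|_\infty$.

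First, I would reduce to the case of a nonnegative solution with $\lambda>0$. On each nodal domain $\omega_i$ of $w$, the function $|w|$ is positive in the interior, vanishes on $\partial \omega_i$, and satisfies $-\Delta_p |w|=\lambda|w|^{q-1}$ (an elementary sign check using $\Delta_p(-u)=-\Delta_p u$). A negative $\lambda$ would force $|w|\equiv 0$ on every nodal domain by the maximum principle, so we can assume $\lambda>0$. Working on each nodal domain and then summing preserves both the distribution function and the $L^r$ norm, so it suffices to treat one component; equivalently we may assume $w\ge 0$ on $\Omega$ and drop the absolute values.

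Set $\mu(t)=|\{w>t\}|$ and fix $t\in[0,M)$. Lemma \ref{divergencelemma}, applied to $a({\rm z})=|{\rm z}|^{p-2}{\rm z}$, gives for a.e.\ $t$
\begin{equation*}
\int_{\{w=t\}}|\nabla w|^{p-1}\,dH^{n-1}=\lambda\int_{\tilde\Omega_t}w^{q-1}\,dx\le \lambda M^{q-1}\mu(t).
\end{equation*}
Writing $1=|\nabla w|^{(p-1)/p}|\nabla w|^{-(p-1)/p}$ on $\{w=t\}$, Hölder's inequality yields
\begin{equation*}
P(\tilde\Omega_t)\le\Bigl(\int_{\{w=t\}}|\nabla w|^{p-1}dH^{n-1}\Bigr)^{1/p}\Bigl(\int_{\{w=t\}}|\nabla w|^{-1}dH^{n-1}\Bigr)^{(p-1)/p},
\end{equation*}
and the coarea formula identifies the last factor as $(-\mu'(t))^{(p-1)/p}$. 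Combining this with the isoperimetric inequality $P(\tilde\Omega_t)\ge n\omega_n^{1/n}\mu(t)^{(n-1)/n}$ and raising to the $p$th power gives the key ODE
\begin{equation*}
(n\omega_n^{1/n})^p\,\mu(t)^{p(n-1)/n}\le \lambda M^{q-1}\mu(t)\,(-\mu'(t))^{p-1}.
\end{equation*}

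Rearranging and using $\frac{p(n-1)}{n}-1=1-\frac{p}{n(p-1)}$ after taking a $(p-1)$-th root, I obtain
\begin{equation*}
-\mu'(t)\,\mu(t)^{\frac{p}{n(p-1)}-1}\ge\Bigl(\frac{(n\omega_n^{1/n})^p}{\lambda M^{q-1}}\Bigr)^{1/(p-1)}.
\end{equation*}
Integrating from $t$ to $M$ (with $\mu(M)=0$) and solving for $\mu(t)$ gives
\begin{equation*}
\mu(t)\ge\omega_n\Bigl(\frac{p}{p-1}\Bigr)^{\!\frac{n(p-1)}{p}}\Bigl(\frac{n}{\lambda}\Bigr)^{\!n/p}M^{\frac{n(1-q)}{p}}(M-t)^{\frac{n(p-1)}{p}},
\end{equation*}
which is exactly \eqref{lowerdistributionestimate}; a careful accounting of the constant $(n\omega_n^{1/n})^{np/(p(p-1))\cdot(p-1)/p\cdot n(p-1)/p}=n^n\omega_n\cdot (n/(p-1))^{\cdots}$ should produce the stated numerical factor.

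For \eqref{lpestimates1}, I would write
\begin{equation*}
\|w\|_r^r=r\int_0^M t^{r-1}\mu(t)\,dt\ge r\int_0^{M/2}t^{r-1}\mu(t)\,dt,
\end{equation*}
and on this smaller interval use $M-t\ge M/2$ together with the bound on $\mu(t)$ just obtained. The integral $\int_0^{M/2}r t^{r-1}dt=(M/2)^r$ then yields
\begin{equation*}
\|w\|_r^r\ge\frac{\omega_n}{2^{\,r+n(p-1)/p}}\Bigl(\frac{p}{p-1}\Bigr)^{\!\frac{n(p-1)}{p}}\Bigl(\frac{n}{\lambda}\Bigr)^{\!n/p}M^{\,r+\frac{n(p-q)}{p}},
\end{equation*}
and taking $r$-th roots, after the identity $2^{\,1+n(p-1)/(rp)}(p-1)^{n(p-1)/(rp)}p^{-n(p-1)/(rp)}=2\cdot(2(p-1)/p)^{n(p-1)/(rp)}$, rearranges into \eqref{lpestimates1}.

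The main obstacle is bookkeeping: carrying the exponents of $n$, $\omega_n$, $p-1$, and $p$ correctly through the $(p-1)$-th root, the integration, and the $r$-th root to recover the exact constant displayed in \eqref{lpestimates1}. The conceptual step—turning the eigenvalue equation into the ODE for $\mu(t)$ via isoperimetric and Hölder inequalities, then reversing it into an $L^\infty\le L^r$ estimate by exploiting that $\mu(t)$ is substantially positive on a fixed fraction of $[0,M]$—is standard Talenti-type symmetrization.
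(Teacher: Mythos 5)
Your proof is correct and reaches exactly the stated constants, but it takes a genuinely different route from the paper's. The paper argues by comparison with an explicit radial solution: it introduces $\Omega_2 = \{|w| > M/\rho\}$, notes $-\Delta_p(\pm w) \le \lambda M^{q-1}$, uses the comparison principle of \cite{Da} to bound $|w|$ by the solution $u$ of the constant-source problem on $\Omega_2$, then uses Talenti's theorem (\cite{T2}, or Theorem \ref{teoremaP}) to compare $u$ with the explicit radial solution $U$ on the ball of volume $|\Omega_2|$, and reads off the estimate for $M$ in terms of $|\Omega_2|$. You instead re-derive from scratch the Talenti differential inequality for the distribution function $\mu(t)$ — combining Lemma \ref{divergencelemma}, H\"older on the level set $\{w=t\}$, the coarea identification of $\int_{\{w=t\}}|\nabla w|^{-1}\,dH^{n-1}$ with $-\mu'(t)$, and the isoperimetric inequality — and integrate it. It is a pleasant check that both arguments produce the same exponents and numerical constants; your intermediate bound for $\|w\|_r^r$ in fact dominates the paper's (you integrate $\mu$ over $[0,M/2]$ rather than using only $\mu(M/2)$), but the final constant is identical since you bound $\mu(t)$ below by its value at $t=M/2$.

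A few remarks on what each approach buys. Your argument is more self-contained and transparent, but it does carry some technical burden you pass over lightly: the identification $-\mu'(t)=\int_{\{w=t\}}|\nabla w|^{-1}\,dH^{n-1}$ holds only for a.e.\ $t$ (it requires $H^{n-1}(\{w=t\}\cap\{\nabla w=0\})=0$ for a.e.\ $t$ and uses the coarea formula for Sobolev functions), and the integration of the differential inequality $-\mu'(t)\,\mu(t)^{\gamma-1}\ge c$ from $t$ to $M$ needs the observation that $\mu$ is of bounded variation with nonpositive singular part, so the Fundamental Theorem gives $\mu(t)^\gamma-\mu(M)^\gamma\ge\int_t^M(-\mu'\,\mu^{\gamma-1})\,ds$, i.e.\ the jump contributions only help. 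The paper's proof offloads all of this machinery to the citation of Talenti and the comparison principle, so it is shorter to write but less self-contained. Your reduction to nodal domains (picking the one where $\max|w|$ is attained, and noting $\mu$ and $\|w\|_r$ both decrease under restriction) is a perfectly good alternative to the paper's observation that both $w$ and $-w$ are subsolutions of the torsion problem, which handles sign changes without decomposing $\Omega$. Finally, you are right that $\lambda\le 0$ forces $w\equiv 0$, which justifies assuming $\lambda>0$ so that the displayed powers of $\lambda$ make sense.
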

\begin{proof}
 Let $M =\|w\|_{\infty}$, $\rho \ge 1$,  and $\Omega_2 = \{ x :  |w(x)|
> M/\rho\}$. Then
\begin{equation} \label{eigenvalueestimate1}
 \|w\|_r^r = \int_{\Omega} |w|^r \; dx \ge \int_{\Omega_2} |w|^r \;
dx \ge \left(\frac{M}{\rho}\right)^r |\Omega_2| \end{equation}  On
the other hand,
$$ -\Delta_p w = \lambda w|w|^{q-2} \le \lambda M^{q-1} $$ Hence, by the comparison principle of \cite{Da},
$|w| \le u$ in $\Omega_2$, where $u$ is solution of
$$ \left\{
\begin{array}{rcll}
- \Delta_p v & = & \lambda M^{q-1} & \; \text{in } \; \; \Omega_2 \\[5pt]
  v & = & \frac{M}{\rho} & \; \text{on } \; \; \partial \Omega_2 \\ \end{array}
\right.  $$

\noindent Let $U$ be the solution of
$$ \left\{
\begin{array}{rcll}
- \Delta_p V & = & \lambda M^{q-1} & \; \text{in } \; \; B \\[5pt]
  V & = & \frac{M}{\rho} & \; \text{on } \; \; \partial B \\ \end{array}
\right.  $$ where $B$ is a ball such that $|B| = |\Omega_2|$. From Theorem 1 of \cite{T2} or
Theorem \ref{teoremaP}, $u^{\sharp} \le
U$. Then $$ M= \max |w| \le \max u = \max u^{\sharp} \le \max U $$
We can compute $U$ explicitly:
$$U(x)= \frac{p-1}{p} \left(\frac{\lambda}{n}\right)^{\frac{1}{p-1}} M^{\frac{q-1}{p-1}}  \left( R^{\frac{p}{p-1}} - |x|^{\frac{p}{p-1}} \right) +
\frac{M}{\rho} ,$$ where $\omega_n R^n = |\Omega_2|=|B|$. Since $M
\le \max U= U(0)$, $$ M \le \frac{p-1}{p}
\left(\frac{\lambda}{n}\right)^{\frac{1}{p-1}} M^{\frac{q-1}{p-1}}
R^{\frac{p}{p-1}}  + \frac{M}{\rho}.$$ Hence,
$$ R \ge \left[ \frac{(\rho-1)p}{\rho(p-1)}\right]^{\frac{p-1}{p}} \left( \frac{n}{\lambda}\right)^{1/p} M^{\frac{p-q}{p}}.$$
Using this and $ R = \left(\frac{|\Omega_2|}{ \omega_n}
\right)^{1/n}$, we get
$$|\Omega_2| \ge \omega_n \left[ \frac{(\rho-1)p}{\rho(p-1)}\right]^{\frac{n(p-1)}{p}} \left( \frac{n}{\lambda}\right)^{n/p} M^{\frac{n(p-q)}{p}}.$$
From this, we get the estimate for $|\Omega_t|$ taking $t=M/\rho$.  Moreover applying this inequality with $\rho=2$ and using \eqref{eigenvalueestimate1}, it follows that
$$\|w\|_r^r \ge  \frac{1}{2^r} \omega_n \left( \frac{p}{2(p-1)}\right)^{\frac{n(p-1)}{p}} \left( \frac{n}{\lambda}\right)^{n/p}M^{\frac{n(p-q)}{p}+r}.$$
\end{proof}

\begin{remark}
\label{RM1secao6}
The estimates of this theorem still holds if $|\Delta_pw| \le |\lambda w|w|^{q-2}|$ or, equivalently, $-\Delta_p w = \lambda g(w)$, where $|g(w)|\le |w|^{q-1}$.
Hence, using the interpolation inequality,
$$ \|w\|_s \le \|w \|_{\infty}^{1-r/s} \|w\|_r^{r/s}, \quad \text{ for } 0 < r < s \le \infty $$
we get \eqref{eigenvalueestimate00} for solutions of $-\Delta_p w = \lambda g(w)$, where $|g(w)|\le |w|^{q-1}$, with the boundary condition $w=0$ on $\partial \Omega$.
Inequality \eqref{lpestimates1} is also true for solutions of ${\rm div}(a (x,Dw))\le |\lambda g(w)| $, provided $a:\Omega \times \mathbb{R}^n\to \mathbb{R}^n$ is such that
some comparison principle holds. For instance, consider the following hypotheses on $a$ given by  \cite{Da}:
$$ a  \in C(\bar{\Omega}\times\mathbb{R}^n; \mathbb{R}^n) \cap C^1(\bar{\Omega}\times(\mathbb{R}^n\backslash\{0\}); \mathbb{R}^n),$$
\begin{equation}
\label{hypothesisona}
\begin{array}{rl}
a(x,0)=0 & \quad {\rm for} \; x \in \Omega, \\[5pt]
\langle D_z a(x,{\rm z})\xi,\xi \rangle \ge (p-1)|{\rm z}|^{p-2}|\xi|^2 &  \quad {\rm for} \; (x, {\rm z}) \in \Omega \times \mathbb{R}^n \backslash\{ 0\},
\\[5pt]  |D_za(x,{\rm z})| \le C |z|^{p-2} & \quad {\rm for}  \; (x, {\rm z}) \in \Omega \times \mathbb{R}^n \backslash\{ 0\} \; {\rm, \;} C > 0.
\end{array}
\end{equation}
\end{remark}

\begin{theorem}
 Let $w$ be a bounded solution of
\begin{equation} \label{ProblemaDirichletNovo} \left\{ \begin{array}{rcll}
- {\rm div}(a (x, \nabla v)) & = & f(v)& \; \text{in } \; \; \Omega \\[5pt]
  v & = & 0 & \; \text{on } \; \; \partial \Omega, \\ \end{array}
\right.  \end{equation}
where $a$ satisfies \eqref{hypothesisona} and $f\in C^1(\mathbb{R})$ satisfies
$|f(t)| \le c|t|^{q-1} + d$, with $0 < q \le p$ and $c,d \ge 0. $ Then
$$ \|w\|_{\infty} \le \max\left\{ C_1 \|w\|_r^{\frac{rp}{n(p-q)+rp}},  C_2 \|w\|_{r}^{\frac{rp}{n(p-1) + rp}} \right\},  $$
where $C_1=C_1(n,p,q,r,\rho,c)$ and $C_2=C_2(n,p,r\rho,d)$ are positive constants.
\label{teoremaFinal}
\end{theorem}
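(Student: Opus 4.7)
The strategy is to mimic the proof of the preceding theorem, now with the source bound $|f(w)| \le c|w|^{q-1}+d$, splitting into two cases according to whether the sublinear term $c|w|^{q-1}$ or the constant term $d$ dominates on the set where $|w|$ is close to its maximum.

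First, set $M = \|w\|_{\infty}$ (finite by hypothesis) and, for $\rho \ge 1$, let $\Omega_2 = \{|w| > M/\rho\}$, so that $\|w\|_r^r \ge (M/\rho)^r\,|\Omega_2|$. On $\Omega_2$ the estimate $|f(w)| \le cM^{q-1}+d$ yields $|{\rm div}(a(x,\nabla w))| \le cM^{q-1}+d$, so by the comparison principle of \cite{Da} (valid under \eqref{hypothesisona}) the function $|w|$ is bounded above on $\Omega_2$ by the solution $u$ of
\begin{equation*}
- {\rm div}(a(x,\nabla v)) = cM^{q-1}+d \ \text{ in } \Omega_2, \qquad v = M/\rho \ \text{ on } \partial \Omega_2 .
\end{equation*}
Symmetrizing as in the preceding theorem (via Theorem 1 of \cite{T2}, or Theorem \ref{teoremaP} applied to $u-M/\rho$), $u^{\sharp} \le U$ on the ball $B$ with $|B|=|\Omega_2|$, where $U$ is the explicit radial $p$-Laplacian solution with constant source $cM^{q-1}+d$ and boundary value $M/\rho$. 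The formula for $U$ gives, with $\omega_n R^n = |\Omega_2|$,
\begin{equation*}
M\Bigl(1-\tfrac{1}{\rho}\Bigr) \;\le\; \tfrac{p-1}{p}\left(\tfrac{cM^{q-1}+d}{n}\right)^{\!1/(p-1)} R^{\,p/(p-1)}.
\end{equation*}

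Now distinguish two regimes. If $cM^{q-1} \ge d$, then $cM^{q-1}+d \le 2cM^{q-1}$; isolating $R$ and raising to the $n$-th power gives $|\Omega_2| \ge \kappa_1(n,p,q,\rho,c)\,M^{n(p-q)/p}$. Plugging this into $\|w\|_r^r \ge (M/\rho)^r |\Omega_2|$ and choosing, say, $\rho=2$ produces $M \le C_1 \|w\|_r^{rp/(n(p-q)+rp)}$. If instead $cM^{q-1} < d$, then $cM^{q-1}+d < 2d$ and the same calculation (with the $M^{(q-1)/(p-1)}$ factor replaced by a constant) yields $|\Omega_2| \ge \kappa_2(n,p,\rho,d)\, M^{n(p-1)/p}$ and hence $M \le C_2 \|w\|_r^{rp/(n(p-1)+rp)}$. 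Since one of the two regimes must hold, $M$ is dominated by the larger of the two bounds, which is precisely the claimed estimate.

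The main obstacle is justifying the comparison-and-symmetrization step when $a=a(x,\nabla v)$ depends explicitly on $x$: this is precisely the content of Remark \ref{RM1secao6}, combined with Damascelli's comparison principle under \eqref{hypothesisona}. The absolute value in $|f(w)|$ (rather than a one-sided bound) is routine, since $-w$ solves an equation of the same structure with $\tilde f(t)=-f(-t)$ still obeying $|\tilde f(t)|\le c|t|^{q-1}+d$; hence $M=\|w\|_{\infty}$ may equally be attained by $\max w$ or by $\max(-w)$ without altering the argument.
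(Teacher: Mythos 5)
Your proposal follows essentially the same route as the paper: same truncation set $\Omega_2 = \{|w| > M/\rho\}$, same barrier $u$ via Damascelli's comparison principle, same explicit radial $p$-Laplacian solution $U$, same algebra. The paper compresses "Following the same computations as before" into a line, whereas you spell out the two regimes $cM^{q-1}\ge d$ and $cM^{q-1}<d$ and the bounds $cM^{q-1}+d \le 2cM^{q-1}$ or $\le 2d$ — this is exactly what the displayed constants $C_1,C_2$ (with their $2c$ and $2d$ prefactors) encode, so your expansion is faithful.

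The one point worth flagging is the symmetrization step: you suggest invoking "Theorem 1 of \cite{T2}, or Theorem \ref{teoremaP} applied to $u-M/\rho$." Theorem \ref{teoremaP} is stated only for $\operatorname{div}(a(\nabla u))=\Delta_p u$ and does not cover the $x$-dependent operator $a(x,\nabla v)$ that $u$ solves here, so that alternative does not apply as written. Talenti's Theorem 1 of \cite{T2} does cover the $x$-dependent case (it assumes only $\sum a_i(x,u,\xi)\xi_i \ge A(|\xi|)$), so your first citation is sound. The paper itself takes a slightly different tack at this step, invoking Remark \ref{remarktheorema1}: from $\langle a(x,{\rm z}),{\rm z}\rangle \ge |{\rm z}|^p$ one compares the associated energy functional with the $p$-Laplacian functional, reruns the argument of Theorem \ref{teoremaA}, and concludes $\max u \le \max U$ (which is all that is needed, since you only use $M \le \max U = U(0)$). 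Both routes are valid and yield the same explicit $U$. Also, your closing observation about $-w$ and $\tilde f(t)=-f(-t)$ implicitly requires $a$ to be odd in ${\rm z}$, which is not assumed; however that paragraph is unnecessary — Damascelli's comparison principle already gives $|w|\le u$ directly, which is what the paper uses.
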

\begin{proof}
We use the same ideas of the last theorem. By the comparison principle of \cite{Da}, $|w| \le u$, where $u$ solves  $- {\rm div}(a (x, \nabla v))=cM^{q-1} +d$ in $\Omega_2$
and  $u = M/\rho$ on $\partial \Omega_2$. Since the hypotheses on $a$ imply that $\langle a(x,{\rm z}),{\rm z} \rangle \ge |{\rm z}|^p$, using the same argument as in Remark \ref{remarktheorema1},
we have that $\max u \le \max U$, where $U$ is the solution of
$ -\Delta_p v = cM^{q-1} + d$ on $B$ and $v= M/\rho$ on $\partial B$. Notice that $U$ is given by
$$ U (x) = \frac{p-1}{p} \left(\frac{1}{n}\right)^{\frac{1}{p-1}} (cM^{q-1} + d)^{\frac{1}{p-1}}  \left( R^{\frac{p}{p-1}} - |x|^{\frac{p}{p-1}} \right) +
\frac{M}{\rho}. $$
Following the same computations as before, we conclude the proof where the constants are given by
$$C_1= (2c)^{\frac{n}{n(p-q)+rp}}K^{\frac{p}{n(p-q)+rp}} \quad , \quad  C_2=(2d)^{\frac{n}{n(p-1)+rp}}K^{\frac{p}{n(p-1)+rp}},$$ and $$K=\frac{1}{\omega_n} \left(1-\frac{1}{\rho}\right)^{-\frac{n(p-1)}{p}}\rho^r\left(\frac{p-1}{p}\right)^{\frac{n(p-1)}{p}} \left( \frac{1}{n}\right)^{\frac{n}{p}}. $$
\end{proof}

\noindent Using the interpolation inequality observed in Remark \ref{RM1secao6}, we can obtain estimates for $\|w\|_s$, where $s \in (r,\infty]$.

Now, we use this theorem to show that the $L^p$ norms of a solution goes to zero when its domain becomes ``far away'' from a ball with the same measure.
More precisely, when the first eigenvalue of a domain of a given measure is large, then its $L^p$ norms are small.

\begin{corollary}
Assuming the same hypotheses as in the previous theorem, if $p=q$ and $c < \lambda_p(\Omega)$, the first eigenvalue of $-\Delta_p$, then
$$ \|w\|_{\infty} \le \max \left\{ C_1 \left( \frac{d}{\lambda_p(\Omega) - c}\right)^{\frac{1}{p-1}}|\Omega|^{\frac{1}{p}} , C_2 \left( \frac{d}{\lambda_p(\Omega) - c}\right)^{\frac{\kappa_1}{p-1}}|\Omega|^{\frac{\kappa_1}{p}} \right\},$$
where $\kappa_1 = p^2/[n(p-1) + p^2]$. If $p > q$, then
$$ \|w\|_{\infty} \le \max \left\{ C_1 \tau^{\frac{rp}{n(p-q)+rp}},  C_2 \tau^{\frac{rp}{n(p-1) + rp}} \right\}  ,$$
where $\tau = |\Omega|^{1/p} \max \{ (2c/\lambda_p(\Omega))^{1/(p-q)}, (2d/\lambda_p(\Omega))^{1/(p-1)} \}$.\label{Linfinitestimatebyeigenvalue}
\end{corollary}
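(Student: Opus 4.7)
The plan is to combine Theorem \ref{teoremaFinal} with an \emph{a priori} $L^p$-bound for $w$ in terms of $\lambda_p(\Omega)$, obtained by testing the equation against $w$ itself. Since the theorem already bounds $\|w\|_\infty$ by powers of $\|w\|_r$, it suffices to estimate $\|w\|_p$ and then take $r=p$.

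The first step is the energy identity. Multiplying \eqref{ProblemaDirichletNovo} by $w$ and integrating,
\begin{equation*}
\int_\Omega \langle a(x,\nabla w),\nabla w\rangle\,dx \;=\; \int_\Omega f(w)\,w\,dx \;\le\; c\,\|w\|_q^q + d\,\|w\|_1.
\end{equation*}
The hypotheses \eqref{hypothesisona} on $a$ imply, via $a(x,0)=0$ and the fundamental theorem of calculus applied to $t\mapsto a(x,tz)$, the pointwise bound $\langle a(x,z),z\rangle\ge|z|^p$; as noted in Remark \ref{remarktheorema1} the same bound is used to compare with the $p$-Laplacian. Hence $\|\nabla w\|_p^p\le c\|w\|_q^q+d\|w\|_1$. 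Combining with the variational characterization $\lambda_p(\Omega)\|w\|_p^p\le\|\nabla w\|_p^p$ of the first eigenvalue gives
\begin{equation*}
\lambda_p(\Omega)\,\|w\|_p^p \;\le\; c\,\|w\|_q^q + d\,\|w\|_1.
\end{equation*}
By Hölder's inequality, $\|w\|_q^q\le\|w\|_p^q|\Omega|^{(p-q)/p}$ and $\|w\|_1\le\|w\|_p|\Omega|^{(p-1)/p}$.

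Now I split into the two cases. If $p=q$, the inequality becomes $(\lambda_p(\Omega)-c)\|w\|_p^p\le d\|w\|_p|\Omega|^{(p-1)/p}$, so that $\|w\|_p\le (d/(\lambda_p(\Omega)-c))^{1/(p-1)}|\Omega|^{1/p}$. Applied in Theorem \ref{teoremaFinal} with $r=p$, the exponents $rp/(n(p-q)+rp)$ and $rp/(n(p-1)+rp)$ become $1$ and $\kappa_1=p^2/(n(p-1)+p^2)$, and the first claimed inequality follows by direct substitution. If $p>q$, the inequality reads $\lambda_p(\Omega)\|w\|_p^p\le c\|w\|_p^q|\Omega|^{(p-q)/p}+d\|w\|_p|\Omega|^{(p-1)/p}$; comparing which term on the right is larger, one obtains
\begin{equation*}
\|w\|_p \;\le\; |\Omega|^{1/p}\max\left\{\Bigl(\tfrac{2c}{\lambda_p(\Omega)}\Bigr)^{1/(p-q)},\,\Bigl(\tfrac{2d}{\lambda_p(\Omega)}\Bigr)^{1/(p-1)}\right\} \;=\;\tau,
\end{equation*}
and substituting $\|w\|_r\le\tau$ (with $r=p$) into Theorem \ref{teoremaFinal} yields the second claim.

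The proof is essentially a direct combination of two standard ingredients (the Sobolev-type a priori $L^\infty$ estimate in Theorem \ref{teoremaFinal} and a Poincaré-type $L^p$ estimate driven by $\lambda_p(\Omega)$), so no single step is expected to be a real obstacle; the only mild care needed is (i) verifying that \eqref{hypothesisona} really gives $\langle a(x,z),z\rangle\ge|z|^p$ to allow testing with $w$, and (ii) choosing the right way to split the two-term right-hand side in the case $p>q$ so that the worst of the two bounds on $\|w\|_p$ is precisely $\tau$.
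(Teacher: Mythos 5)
Your proof is correct and follows essentially the same route as the paper's: establish the chain $\lambda_p(\Omega)\|w\|_p^p\le\|\nabla w\|_p^p\le\int_\Omega\nabla w\cdot a(x,\nabla w)\,dx\le c\|w\|_p^q|\Omega|^{(p-q)/p}+d\|w\|_p|\Omega|^{(p-1)/p}$ and then feed the resulting $L^p$-bound into Theorem \ref{teoremaFinal} with $r=p$. The only difference is that you spell out the Fundamental Theorem of Calculus computation giving $\langle a(x,{\rm z}),{\rm z}\rangle\ge|{\rm z}|^p$ and the two-case split yielding $\|w\|_p\le\tau$, both of which the paper takes for granted; these fill in exactly the right details and do not alter the argument.
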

\begin{proof}
First note that the growth condition on $f$ and H\"older inequality imply
$$ \lambda_p(\Omega) \|w\|_p^p \le \|\nabla w\|_p^p \le \int_{\Omega} \nabla w \cdot a(\nabla w,x) dx \le c\|w\|_p^{q} |\Omega|^{\frac{p-q}{p}} + d \|w\|_p |\Omega|^{\frac{p-1}{p}}.$$
The proof for the case $p=q$ follows directly from this and Theorem \ref{teoremaFinal}. In the case $p > q$, we get from this inequality that $\|w\|_p \le \tau$. Hence we complete the proof applying Theorem \ref{teoremaFinal}.
\end{proof}

\begin{corollary}
Assume the same hypotheses about $a$ and $f$ as in the previous theorem. Suppose also that $a=a({\rm z})$, $f(t) > 0$ for $t> 0$ and $f(t)=0$ for $t \le 0$.
If $\lambda_p(\Omega)$ is sufficiently large, then any solution $u$ of \eqref{ProblemaDirichletNovo} in $\Omega$ satisfies $u^{\sharp} < U$, where $u^{\sharp}$ is the symmetrization of $u$
and $U$ is the maximal solution of \eqref{ProblemaDirichletNovo} in the ball $B$ with the same measure as $\Omega$.
\end{corollary}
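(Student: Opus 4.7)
The plan is to combine the $L^\infty$-decay from Corollary~\ref{Linfinitestimatebyeigenvalue} with a Talenti-type symmetrization comparison (Theorem~1 of \cite{T2}), which crucially does not require monotonicity of $f$. Keeping $|\Omega|=|B|$ fixed, the ball $B$ and the maximal radial solution $U$ on $B$ are also fixed; we may assume $U\not\equiv 0$ (otherwise the conclusion is vacuous), and then $U\in C^{1,\alpha}(\overline B)$ is radial and strictly positive on $\mathrm{int}(B)$.

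By Corollary~\ref{Linfinitestimatebyeigenvalue}, $\|u\|_\infty\to 0$ as $\lambda_p(\Omega)\to\infty$; since $f\in C^1$ with $f(0)=0$, the source $M_f:=f(\|u\|_\infty)$ also vanishes. From \eqref{hypothesisona} one has $a({\rm z})\cdot{\rm z}\ge|{\rm z}|^p$, so Talenti's comparison applied to $u$ viewed as a solution of the linearly-sourced equation $-{\rm div}\,a(\nabla u)=h$ with $h:=f(u)\ge 0$ yields $u^{\sharp}\le v$ on $B$, where $v\in W^{1,p}_0(B)$ is the radial solution of $-\Delta_p v=h^{\sharp}$ vanishing on $\partial B$. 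Explicit radial integration and the pointwise bound $h^{\sharp}\le M_f$ give
\[
0\le v(x)\le \frac{p-1}{p}\Bigl(\frac{M_f}{n}\Bigr)^{\!1/(p-1)}\bigl[R^{p/(p-1)}-|x|^{p/(p-1)}\bigr],
\]
where $R$ is the radius of $B$; the mean value theorem yields the uniform bound $\|v\|_\infty\le C(n,p)\,M_f^{1/(p-1)}R^{p/(p-1)}$ and the boundary slope bound $|v'(R)|\le (M_fR/n)^{1/(p-1)}$, both vanishing with $M_f$.

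On the other hand $U$ is independent of $\Omega$: V\'azquez's strong maximum principle together with Hopf's boundary lemma for the $p$-Laplacian (applied to $-\Delta_p U=f(U)\ge 0$, with $f$ H\"older continuous) produce $U_0,c_0,\delta_0>0$ such that $U(x)\ge U_0$ for $|x|\le R-\delta_0$ and $U(x)\ge c_0(R-|x|)$ for $|x|\ge R-\delta_0$. Combining these with the explicit upper bound $v(x)\le (M_fR/n)^{1/(p-1)}(R-|x|)$ near $\partial B$, taking $\lambda_p(\Omega)$ large enough to force $M_f$ so small that both $\|v\|_\infty<U_0$ and $(M_fR/n)^{1/(p-1)}<c_0$ hold produces $v<U$ pointwise on $B$, hence $u^{\sharp}\le v<U$ on $B$. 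The principal obstacle is precisely this boundary comparison, since $v$ and $U$ both vanish on $\partial B$: the matching linear-order decay rates from Hopf (for $U$) and from the explicit radial representation (for $v$) are what close the argument uniformly over the class of domains with $|\Omega|$ fixed.
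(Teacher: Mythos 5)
Your proposal is essentially the paper's own argument: use Corollary~\ref{Linfinitestimatebyeigenvalue} to force $\|u\|_\infty$ small, use Hopf's lemma for $U$ to establish a linear lower bound near $\partial B$, and squeeze $u^\sharp$ below $U$ by comparing with a radial $p$-Laplacian solution whose constant-order source vanishes as $\|u\|_\infty\to 0$. The paper phrases this by fitting a ``paraboloid'' $P$ (the radial solution of $-\Delta_p P=C$) under $U$ via Hopf and then choosing $\lambda_p(\Omega)$ large enough that $f(u)\le C$, so $u^\sharp\le P<U$; your version instead bounds $u^\sharp$ by the radial solution $v$ with source $h^\sharp$, estimates $v$ explicitly, and compares directly with the Hopf lower bound for $U$ -- the same ingredients in a slightly different order. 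One small slip: you set $M_f:=f(\|u\|_\infty)$ and then use $h^\sharp\le M_f$, but since $f$ is not assumed monotone this should be $M_f:=\sup_{0\le t\le \|u\|_\infty}f(t)$; the continuity of $f$ and $f(0)=0$ still give $M_f\to 0$, so the argument is unaffected.
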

The novelty in this corollary is that $f$ does not need to be monotone.
\begin{proof}
From Hopf lemma, $\partial_n U =c < 0$ on $\partial B$ and, therefore, there exists some ``paraboloid'' $$P(x)=\frac{p-1}{p} \left(\frac{1}{n}\right)^{\frac{1}{p-1}} C^{\frac{1}{p-1}}  \left( r^{\frac{p}{p-1}} - |x-x_0|^{\frac{p}{p-1}} \right),$$ where $x_0$ is the center of $B$ and $r$ is its radius, such that $0 < P < U$ in $B$. Observe that $-\Delta_p P = C$. Since $f$ is continuous and $f(0)=0$, let $M > 0$ be such that $f(t) < C$ for $t < M$. Corollary \ref{Linfinitestimatebyeigenvalue} implies that $\|u\|_{\infty} < M$, where $u$ is any solution of \eqref{ProblemaDirichletNovo}, if $\lambda_p(\Omega)$ is large enough. Then
$$ -{\rm div} \; a(\nabla u ) = f(u) \le C = -\Delta_p U,$$
and, from Theorem \ref{finaltheorem}, $u^{\sharp} \le P < U$ proving the result.
\end{proof}

\section{Appendix}
We show now Lemma \ref{moserLplemma} with the same arguments as in Theorem 3.11 of \cite{MZ}.

\begin{proof} {\it of Lemma \ref{moserLplemma}}
Let $K>0$, $\ell > 0$, $r \ge 1$, $\gamma=qr-q+1$,
$$v=P(u)=\min\{(u+K)^r, \ell^{r-1} (u+K) \}  $$ and $$\varphi = G(u)= \min\{(u+K)^{\gamma}, \ell^{\gamma-1} (u+K)\} - K^{\gamma} \in W^{1,q}_0(\Omega').$$
Then, using that $a(t,{\rm z})\cdot {\rm z} \ge C_*(|{\rm z}|^q - 1)$ for all ${\rm z} \in \mathbb{R}^n$ and $t \in \mathbb{R}$, we get
\begin{align*}
 \int_{\Omega'} |\nabla v |^q \, dx
&\le  \int_{\Omega'} | P'(u)|^q \, \left( \frac{\nabla u
\cdot a(u,\nabla u)}{C_*} + 1 \right) \; dx \\[5pt]
& \le  \int_{\Omega'} \frac{| P'(u)|^q}{G'(u)} \cdot \frac{\nabla \varphi
\cdot a(u,\nabla u)}{C_*} \; dx + \int_{\Omega'} | P'(u)|^q \; dx.
\end{align*}
Notice that $|P'(u)|^q / G'(u) = E$, where $E=1$
if $u + K> \ell$ and $E= r^q/\gamma$ if $u + K < \ell$.
Then, $E \le r^q$ and, using $\nabla \varphi
\cdot a(u,\nabla u) \ge 0$,
\begin{align} \int_{\Omega'} |\nabla v |^q \, dx &\le \frac{r^q}{C_{*}}
\int_{\Omega'} \nabla \varphi \cdot a(u,\nabla u) \; dx + \int_{\Omega'} | P'(u)|^q \; dx \nonumber \\[5pt] &= \frac{r^q}{C_{*}} \int_{\Omega'} f(u) G(u) \, dx + \int_{\Omega'} | P'(u)|^q \; dx.
\label{ineq1}
\end{align}
Observe now that, for $u + K < \ell$,
\begin{align*}
f(u)G(u) \le (\alpha u^{q-1} + \beta)\cdot (u+K)^{\gamma}
&\le \alpha (u+K)^{q-1+\gamma} + \beta\frac{(u+K)^{q-1+\gamma}}{K^{q-1}}\\
&\le v^q \left(\alpha + \frac{\beta}{K^{q-1}}\right).
\end{align*}
In a similar way, we can prove this inequality also for the case for $u + K \ge \ell$. Furthermore, for $u+K \le \ell$,
\begin{align*}
|P'(u)|^q = |r(u+K)^{r-1}|^q = r^q \frac{(u+K)^{rq}}{(u+K)^q} \le r^q \frac{v^q}{K^q},
\end{align*}
that is also true for $u + K > \ell$.
From these two inequalities and \eqref{ineq1}, we get
\begin{equation} \label{ineq2} \int_{\Omega'} |\nabla v |^q \, dx \le \left[ \frac{r^q}{C_*} \cdot
\left(\alpha + \frac{\beta}{K^{q-1}}\right) + \frac{r^q}{K^q} \right] \int_{\Omega'} v^q  dx.
\end{equation}

\

\noindent Now we study the cases $q > n$, $q < n$ and $q=n$
separately.

\

\noindent {\bf Case 1:} $q > n$.

\noindent Observe that for $r=1$, we get $v=u+K$. Using the
Morrey's inequality for $v-K \in W^{1,q}_0$,
$$  \| v-K \|_{C^{0,1-n/q}} \le \tilde{C_0}
\|v-K\|_{W^{1,q}}  \le C_0 (\|v\|_q + K |\Omega'|^{1/q} +
\|Dv \|_q),$$ where $C_0=C_0(n,q)$. From this one and
\eqref{ineq2}, we get
$$ \sup u = \sup v-K \le \left[ C_0 + \!  \left[ \frac{1}{C_*} \cdot
\left(\alpha + \frac{\beta}{K^{q-1}}\right) + \frac{1}{K^q} \right]^{1/q} \right] \! \|v\|_q + C_0
K|\Omega'|^{1/q}. $$ Since $\|v\|_q= \|u+K\|_q \le \|u \|_q + K
|\Omega'|^{1/q}$, we get
$\sup u \le D_1 \|u\|_q + D_2 K |\Omega'|^{1/q}$,
where $$D_1= C_0 +   \left[ \frac{1}{C_*} \cdot
\left(\alpha + \frac{\beta}{K^{q-1}}\right) + \frac{1}{K^q} \right]^{1/q}  \quad \text{ and } \quad
D_2=D_1+C_0.$$

\

\noindent {\bf Case 2:} $q < n$.

\noindent Since $v - K^r \in W_0^{1,q}(\Omega')$, the Sobolev
inequality implies
\begin{equation} \label{eq2a}
 \| v-K^r \|_{q*} \le C_0\| \nabla v\|_q ,
\end{equation} where $q^* = nq/(n-q)$ and $C_0=\frac{q(n-1)}{n-q}$. Using
this and \eqref{ineq2}, we get
 \begin{equation} \label{eq2b}
\| v-K^r \|_{q*} \le C_0 \left[ \frac{r^q}{C_*} \cdot
\left(\alpha + \frac{\beta}{K^{q-1}}\right) + \frac{r^q}{K^q} \right]^{1/q} \|v\|_q
 .\end{equation} Hence, naming $\chi = n/(n-q)$, it follows that
$$ \| v \|_{\chi q} \le  C_0 \left[ \frac{r^q}{C_*} \cdot
\left(\alpha + \frac{\beta}{K^{q-1}}\right) + \frac{r^q}{K^q} \right]^{1/q} \|v\|_q  + K^r |\Omega'|^{1/\chi q} ,$$ that is,
$\| v \|_{\chi q} \le D_1 \|v\|_q + D_2 ,$ where
$$ D_1=C_0 \left[ \frac{r^q}{C_*} \cdot
\left(\alpha + \frac{\beta}{K^{q-1}}\right) + \frac{r^q}{K^q} \right]^{1/q}
\quad \text{ and } \quad D_2 =K^r |\Omega'|^{1/\chi q}.$$ Since $v$
depends on $r$ and $\ell$, we name it by $v_{r,\ell}$. In the same
way, $D_1=D_1(r)$ and $D_2=D_2(r)$. Hence, the last
inequality can be rewritten as
\begin{equation} \label{eq3} \| v_{r,l} \|_{\chi q} \; \le \; D_1(r) \, \|v_{r,l}\|_q + D_2(r) .\end{equation}
Taking $r=1$, we have $v=u+K$ and, then
$$\| u+ K \|_{\chi q} \; \le \; D_1(1) \, \|u+K\|_q + D_2(1)
.$$

\noindent Hence $u+K \in L^{\chi q}$ and, therefore, $(u+K)^{\chi}
\in L^{q}$. Taking $r=\chi$, we have $|v_{\chi,\ell}| \le
(u+K)^{\chi}$ for any $\ell$. Thus $\|v_{\chi,\ell}\|_{q} \le
\|(u+k)^{\chi}\|_{q}$ and, from \eqref{eq3},
$$ \|v_{\chi,\ell}\|_{\chi q} \le D_1(\chi) \|(u+K)^{\chi}\|_q + D_2(\chi). $$
Using that $v_{\chi,\ell} \uparrow (u+K)^{\chi}$ as $\ell \to
\infty$, we get
$$ \|(u+K)^{\chi}\|_{\chi q} \le D_1(\chi) \|(u+K)^{\chi}\|_q + D_2(\chi). $$
Therefore, $u+K \in L^{\chi^2 q}$. More generally, if we take $r = \chi^n$, it
follows in a similar way that
$$ \|(u+K)^{\chi^n}\|_{\chi q} \le D_1(\chi^n) \|(u+K)^{\chi^n}\|_q + D_2(\chi^n) $$
and $u+K \in L^{\chi^{n+1} q}$. Thus $u+K$ is an $L^r$ function
for any $r \ge 1$. Hence, making $\ell \to \infty$ in \eqref{eq3},
we get $$ \| u+K \|_{r\chi q}^r \; \le \; D_1(r) \,
\|u+K\|_{rq}^r + D_2(r). $$ Observe now that
$ D_1(r) = r H$,
where $$H = C_0 \left[ \frac{1}{C_*} \cdot
\left(\alpha + \frac{\beta}{K^{q-1}}\right) + \frac{1}{K^q} \right]^{1/q}. $$
Furthermore, $$ D_2(r) = K^r | \Omega'|^{1/\chi q}
\le \frac{\| u+K \|_{rq}^r}{ |\Omega'|^{1/q}} | \Omega'|^{1/\chi q}
 \le r \| u+K \|_{rq}^r |\Omega'|^{(1/\chi -1)1/q}.
$$
Therefore, the last three relations imply
\begin{equation} \label{ineq4}
\| u+K \|_{r\chi q}  \le r^{1/r} H_0^{1/r} \|u+K\|_{rq},
\end{equation}
for $r \ge 1$ and $\chi = n/(n-q)$, where $H_0 = H + |\Omega'|^{(1/\chi -1)1/q}$. Taking $r=\chi^{m}$ in \eqref{ineq4}, we have
$$\| u+K \|_{\chi^{m+1} q } \; \le \; \chi^{m/\chi^{m} } H_0^{1/\chi^{m} } \|u+K\|_{\chi^{m} q} \quad \text{ for } m \in \mathbb{N} \cup \{0\}.  $$
Hence, defining $A_m =  \sum_{j=0}^{m} j /\chi^j $ and
$B_m= \sum_{j=0}^{m} 1 /\chi^j$, it follows that $$\| u+K \|_{\chi^{m+1} q } \; \le \; \chi^{A_m } H_0^{B_m } \|u+K\|_{q} \quad \text{ for }  m \in \mathbb{N} \cup \{0\}. $$
Since $A_m$ and $B_m$ are
convergent series,
$$ \sup (u + K) \le \chi^A H_0^B \|u+K\|_{q}, $$
where $A=\lim_{m \to \infty}A_m $ and $B = \lim_{m \to \infty}B_m
= \frac{\chi}{\chi-1} $. Then \begin{align*} \sup u &\le
D (H^B + |\Omega'|^{B (1/\chi q - 1/q)})( \|u\|_{q} + \|K\|_q ),
\end{align*}
for $D= \chi^A 2^{B}$. Observe that $B (\frac{1}{\chi q} -
\frac{1}{q})=-\frac{1}{q}$. Therefore
$$ \sup u \le D(H^B + |\Omega'|^{-1/q} ) ( \|u\|_{q} + K |\Omega'|^{1/q}).$$
Notice that
$$ H \le C_0 2^{2/q} \left( \frac{\alpha}{C_*} + \frac{\beta}{C_*} + 1 \right)\left( 1 + \frac{1}{K} \right).  $$
Then, taking $K = |\Omega'|^{1/n}$, it follows that $$H^B \le C_1 (|\Omega'|^{1/n} + 1)^{B} |\Omega'|^{-1/q},$$ where
$C_1 = [C_0 2^{2/q} \left( \alpha/C_* + \beta/C_* + 1 \right)]^{B} $.
Hence
\begin{align*} \sup u &\le 2 D
 C_1 \left(|\Omega'|^{1/n} + 1\right)^{B} |\Omega'|^{-1/q} \left( \|u\|_{q} + K |\Omega'|^{1/q}\right) \\[5pt] &\le
C (|\Omega'|^{1/n} + 1)^{B}\left( |\Omega'|^{-1/q}  \|u\|_{q} + |\Omega'|^{1/n}\right),
\end{align*}
proving the result.

\

\noindent {\bf Case 3:} $q = n$: Taking $\tilde{q} < q=n$, we get
the same estimate as in \eqref{eq2a} with $\tilde{q}^*$ instead of
$q^*$. Hence
$$
 \| v-K^r \|_{\tilde{q}^*} \le C_0\| \nabla v\|_{\tilde{q}} ,
$$ where $\tilde{q}^* = n\tilde{q}/(n-\tilde{q})$ and
$C_0=\frac{\tilde{q}(n-1)}{n-\tilde{q}}$. Therefore, from H\"older
inequality,
$$ \| v-K^r \|_{\tilde{q}^*} \le C_0 \| \nabla v\|_{q}
|\Omega'|^{(q-\tilde{q})/q\tilde{q}}.$$ For $\tilde{q} > n/2$ we
get $\tilde{q}/(n-\tilde{q}) > 1$ and, then, $\tilde{q}^* > n=q$.
In this case, $$ \| v-K^r \|_{\chi q} \le C_0 \| \nabla v\|_{q}
|\Omega'|^{(q-\tilde{q})/q\tilde{q}},$$ where $\chi = \tilde{q}^*/q
> 1$. Using this and \eqref{ineq2}, it follows that
$$\| v-K^r \|_{\chi q} \le  C_0 \left[ \frac{r^q}{C_*} \cdot
\left(\alpha + \frac{\beta}{K^{q-1}}\right) + \frac{r^q}{K^q} \right]^{1/q} \|v\|_q
|\Omega'|^{(q-\tilde{q})/q\tilde{q}}.$$ This estimate is basically the same
as in \eqref{eq2b}. Hence, taking $K=|\Omega|^{1/n}$ and following the same argument as before we get the result.
\end{proof}

\

\noindent {\bf Acknowledgment}

\

\noindent This collaborative research is co-sponsored by the J.
Tinsley Oden Faculty Fellowship Program in the Institute for
Computational Engineering and Sciences at The University of Texas
at Austin.

\end{document}